\documentclass[oneside,12pt]{amsart}
\usepackage{eurosym}
\usepackage{comment}
\usepackage{amsfonts}
\usepackage{amsmath,amssymb,amsthm,color}
\usepackage{amsopn}
\usepackage{latexsym}
\usepackage{float}
\usepackage{bm}
\usepackage{xcolor}
\usepackage[utf8]{inputenc}
\usepackage{hhline}

\oddsidemargin 0.1in
\textwidth 6.2in
\textheight 8.4in

\renewcommand{\d}{{\rm d}}
\newcommand{\mmi}{\mathfrak i }
\newcommand{\mr}{\mathfrak r }

\newcommand{\mg}{\mathfrak g }

\newcommand{\ms}{\mathfrak s }

\newcommand{\mmu}{\mathfrak u }

\newcommand{\mk}{\mathfrak k }

\newcommand{\mh}{\mathfrak h }

\newcommand{\mmp}{\mathfrak p }

\newcommand{\so}{\mathfrak{so} }

\newcommand{\gl}{\mathfrak{gl} }

\newcommand{\lela}{ g(}
\newcommand{\rira}{)}

\newcommand{\lra}{\longrightarrow}

\newcommand{\bs}{\backslash}

\newcommand{\GL}{\operatorname{GL}}

\renewcommand{\Im}{\rm Im\,}
\newcommand{\R}{\mathbb R}

\newcommand{\N}{\mathbb N}
\newcommand{\Z}{\mathbb Z}

\newcommand{\ts}{\theta^\sharp}
\newcommand{\rmP}{{\rm P}}

\DeclareMathOperator{\Sim}{Sim}
\DeclareMathOperator{\End}{End}

\DeclareMathOperator{\Aut}{Aut}

\DeclareMathOperator{\ad}{ad}

\DeclareMathOperator{\tr}{tr}

\numberwithin{equation}{section}

 \linespread{1.05}

 \newtheorem{teo}{Theorem}[section]
 \newtheorem{pro}[teo]{Proposition}
  
 \newtheorem{cor}[teo]{Corollary}
 \newtheorem{lm}[teo]{Lemma}
 \newtheorem{defi}[teo]{Definition}

 \theoremstyle{definition}
 
 \newtheorem{remark}[teo]{Remark}  
  \newtheorem{question}[teo]{Question}

\newcommand{\nc}{\newcommand}
\nc{\Iso}{\operatorname{Iso}}
\nc{\Id}{\operatorname{Id}}
 \nc{\iso}{\mathfrak{iso}}
 \nc{\sso}{\mathfrak{so}}
\nc{\Ad}{\operatorname{Ad}} 
\nc{\Sym}{\mathrm{Sym}}
 \nc{\pr}{\operatorname{pr}} 
 \nc{\Dera}{\operatorname{Dera}} 
 \nc{\Auto}{\operatorname{Auto}}
 \nc{\noi}{\noindent}
 \nc{\SO}{\operatorname{\rm SO}}

\title{The characteristic group of Lie LCP manifolds}

\author{Viviana del Barco}
\address{V.~del Barco: Instituto de Matemática, Estatística e Computação Científica, Universidade Estadual de Campinas,  Rua Sergio Buarque de Holanda, 651, Cidade Universitaria Zeferino Vaz, 13083-859, Campinas, São Paulo, Brazil.}
\email{delbarc@ime.unicamp.br}

\author{Andrei Moroianu}
\address{A.~Moroianu: Université Paris-Saclay, CNRS,  Laboratoire de mathématiques d'Orsay, 91405, Orsay, France, and Institute of Mathematics “Simion Stoilow” of the Romanian Academy, 21 Calea Grivitei, 010702 Bucharest, Romania}
\email{andrei.moroianu@math.cnrs.fr}

\subjclass[2020]{53C18, 22E40, 53C30, 53C29} 
\keywords{Conformal geometry, Locally conformally product structures, Unimodular Lie groups, Lattices, Weyl connections} 

\begin{document}

\begin{abstract}
   The (reduced) characteristic group of a locally conformally product manifold is obtained by restricting the action of its fundamental group to the non-flat factor of the universal cover, and taking the connected component of the identity in the closure of this restriction. It was shown by Kourganoff that this group is abelian, but it is currently unknown whether it is simply connected, or might have compact (toric) factors. This question is crucial for a better understanding of LCP structure, as shown recently by B. Flamencourt. In this paper we consider Lie LCP structures (which are defined on quotients of simply connected Lie groups by lattices) and show that the reduced characteristic group of any Lie LCP manifold $\Gamma\bs G$ is contained in the radical of $G$, so in particular is simply connected.
\end{abstract}

\maketitle

\section{Introduction}

Locally conformally product (LCP) structures have been introduced about a decade ago as a result of a conjecture of F.~Belgun and the second author stating that a simply connected Riemannian manifold $(\tilde M,h)$ having a free, discrete co-compact group $\Gamma$ of similarities of $h$, not all being isometries, is either irreducible or flat \cite{BM2016}. A counterexample was found by Matveev-Nicolayevsky \cite{MN2015}, who also proved a couple of years later \cite{MN2017} that in the analytic category, all counterexamples $(\tilde M,h)$ can be obtained as Riemannian products of some flat space $\R^q$ with an irreducible Riemannian manifold $(N,g_N)$. Shortly after, M.~Kourganoff proved that the same is true in the smooth category \cite[Theorem 1.5]{Kourganoff}. The corresponding structures on the quotient manifolds $\Gamma\bs \tilde M$ are called LCP structures.

By definition, an LCP manifold is thus a compact manifold $M$ whose universal cover $\tilde M$ has a non-flat, reducible Riemannian metric together with a free, discrete co-compact group $\Gamma$ of similarities of $h$, not all being isometries. The metric $h$ induces a conformal structure $c$ on $M$, and since similarities are affine transformations, the Levi-Civita connection of $h$ projects to a torsion-free connection $D$ on $M$ preserving the conformal class $c$. Such connections are called Weyl connections \cite{Gaud94,We23}. A Weyl connection on $(M,c)$ is called closed (resp.~exact) if it is locally (resp.~globally) the Levi-Civita connection of a metric in $c$. One can thus equivalently define LCP manifolds as compact conformal manifolds carrying a closed, non-exact, non-flat Weyl structure. 

By the above mentioned result of Kourganoff, the universal cover $\tilde M$ of an LCP manifold $(M,c,D)$, endowed with the Riemannian metric $h$ in the pull-back of the conformal class $c$ whose Levi-Civita connection is the pull-back of $D$, has a de Rham decomposition with exactly two factors: a flat space $\R^q$ with $q\ge 1$, and an irreducible incomplete Riemannian manifold $(N,g_N)$. The fundamental group $\Gamma$ of $M$  acts by similarities of $h$ so preserves the de Rham decomposition. Thus each element $\gamma\in\Gamma$ can be written $\gamma=(\gamma_1,\gamma_2)$ with $\gamma_1\in\Sim(\R^q)$ and $\gamma_2\in\Sim(N,g_N)$, where $\Sim$ denotes the group of similarities. 

A crucial object for the study of LCP manifolds is defined as follows: one denotes by $\rmP\subset \Sim(N,g_N)$ the projection of $\Gamma$, and by $\bar\rmP_0$ the identity component of its closure in $\Sim(N,g_N)$. The group $\R^q\times \bar\rmP_0\subset \Sim(\R^q)\times \Sim(N,g_N)$ is called the {\em characteristic group} of the LCP structure \cite{Fl25}, and $\bar\rmP_0$ is called the {\em reduced characteristic group}. A highly non-trivial result of Kourganoff \cite{Kourganoff} is that these groups are abelian. As a consequence, B.~Flamencourt has shown  that for every LCP manifold, the similarity factors of the elements of $\Gamma$ are algebraic numbers (see \cite{Fl24}).
A central question raised by Flamencourt \cite{Fl25} is whether the characteristic group is always simply connected. Indeed, if this holds, then a reduction procedure allows one a possible classification attempt for LCP structures, or at least a better understanding of them (see \cite[Theorems 4.4 and 4.10] {Fl25}).

It is this question that we will study in this paper, in the particular framework of Lie LCP manifolds, which are LCP manifolds whose universal cover is a Lie group $G$ and such that the fundamental group acts on $G$ by left translations (see Definition \ref{lielcp}). In this case the LCP structure can be described in algebraic terms on the Lie algebra $\mg$ of $G$, which turns out to have a semi-direct product structure $\mg=\mmu\rtimes_\alpha\mh$ where $\mmu$ is an abelian ideal of $\mg$, and $\mh$ is a non-unimodular Lie algebra acting on $\mmu$ via a conformal representation $\alpha$ (see \S\ref{sec:LCPLieAlg} below or \cite[Corollary 4.10]{dBM24}).

Correspondingly, the Lie group $G$ can be written as a semi-direct product $\R^q\rtimes H$, and the reduced characteristic group of $M=\Gamma\bs G$ is obtained by projecting $\Gamma$ on $H$ and taking the identity component of the identity in the closure of this projection. Our main result (Theorem \ref{main}) states that this group is contained in the radical of $G$. The strategy of the proof is the following. Using a Levi decomposition of $H$ as $H=R_0\rtimes S$, where $R_0$ is the radical and $S$ the semisimple part, one obtains a Levi decomposition of $G=\R^q\rtimes H$ by writing 
$$G=\R^q\rtimes(R_0\rtimes S)=
(\R^q\rtimes R_0)\rtimes S=:R\rtimes S.$$
We then further decompose $S=S_1\times K$, where $K$ is the maximal compact factor of $S$ acting trivially on $R$ and $S_1$ is the product of the remaining simple factors of $S$. This gives the following decomposition of $G$:
$$G=(\R^q\rtimes R_0)\rtimes S_1\times K.$$
In Proposition \ref{p=q} we show, using a result of Ragunathan  on intersection of lattices with the radical, that the reduced characteristic group of $\Gamma\bs G$ projects trivially on $S_1$, and is isomorphic to the reduced characteristic group of $\Sigma\bs (R\times K)$, where $\Sigma:=\Gamma\cap(R\times K)$. This allows one to reduce the study to the case where $S_1$ is trivial, i.e. $G=R\times K$ is a direct product of a solvable Lie group and a compact Lie group.

Using the Tits alternative, we then show in Lemma \ref{lm:g2va} that the projection of $\Gamma$ on the compact factor $K$ is virtually abelian. On the other hand, some preliminary algebraic results on lattices on direct sums of vector spaces (Lemma \ref{5.1} and Proposition \ref{pro:findsol}), show that the intersection of $\Gamma$ with the characteristic group of $(\Gamma\cap R)\bs R$ has a subgroup of finite index contained in its derived group. Using this, we realise the projection of $\Gamma$ onto $R_0\times K$ as the graph of a morphism from $R_0$ to $K$ with finite image, therefore allowing us to conclude that the characteristic group of the Lie LCP structure is contained in the radical $R$ of $G$.  Consequently, this group has to be simply connected, as any Lie subgroup of a solvable simply connected Lie group, thus answering Flamencourt's question in the Lie LCP setting. 

A brief account of the contents of the paper is as follows: Section 2 introduces notations and summarizes previous results on (Lie) LCP structures. Section 3 makes an insight on the geometry of semi-direct products of Riemannian Lie groups, when the action is given by conformal maps. In particular, we explicit the de Rham decomposition in Kourganoff's result of Lie LCP manifolds. Section 4 revisits the definition of the characteristic group given in Section 2, now in the case of Lie LCP manifolds, and introduces technical results that will be used in Section 5. It also includes a simple proof that the characteristic group of a Lie LCP {\em solvmanifold} is simply connected. Section 5 addresses the general case; the whole section is devoted to the proof of Theorem \ref{main} using the aforementioned techniques. 

As a consequence,  we obtain two important results on the characteristic group of any Lie LCP manifold: it is simply connected (Corollary \ref{cor:sc}) and is contained in the commutator of the universal cover (Corollary \ref{cor:comm}). The latter was a phenomena observed in all examples in \cite{AdBM, dBM24}, but no proof was given up to now. In those examples it also holds that the characteristic group of a Lie LCP manifold is contained in the nilradical of the universal cover and also, it does not depend on the lattice $\Gamma\subset G$, once the flat factor is fixed. It is still an open question whether these facts hold in general.
\medskip

\noindent {\sc Acknowledgements:} VdB is partially supported by FAPESP grants 2023/15089-9 and 2024/19272-5.  AM is partially supported by the PNRR-III-C9-2023-I8 grant CF 149/31.07.2023 {\em Conformal Aspects of Geometry and Dynamics}. Both authors are partially supported by MATHAMSUD Regional Program 24-MATH-12.

\section{Preliminaries}

\subsection{LCP Lie algebras}\label{sec:LCPLieAlg}

Let $\mg$ denote a finite dimensional real Lie algebra and let $g$ be an inner product on $\mg$. We denote by $G$ the connected and simply connected Lie group with Lie algebra $\mg$, and by the same letter $g$ the left-invariant Riemannian metric on $G$ induced by the inner product $g$.

The Levi-Civita connection $\nabla^g$ of the Riemannian manifold $(G,g)$ preserves left-invariant vector fields: if $X,Y$ are left-invariant vector fields on $G$ then so is $\nabla^g_XY$. Hence, $\nabla^g$ can be identified with the linear map $\nabla^g:\mg\to\so(\mg)$ which, by Koszul's formula, satisfies
\begin{equation}
\label{eq:lc}\lela\nabla^g_xy,z\rira=\frac12\left(\lela  [x,y],z\rira-\lela[x,z],y\rira-\lela[y,z],x)\right),\qquad \forall\; x,y,z\in \mg.
\end{equation} Here and henceforth $\so(\mg)$ is the space of skew-symmetric endomorphisms of $(\mg,g)$.

We are interested in the conformal geometry of $(G,c:=[g])$, where $[g]$ denotes the conformal class of $g$. Any differential 1-form $\theta$ on $G$ defines a Weyl connection $D$ on $(G,c)$ by the fundamental theorem of conformal geometry \cite{We23}, and this correspondence is one-to-one. Under this correspondence, $\theta$ is called the Lee form of $D$; the Weyl structure is called closed (exact) if its Lee form is closed (resp.~exact). We say that a Weyl structure $D$ on $(G,c)$ is left-invariant if its Lee form with respect to the left-invariant metric $g$ is itself left-invariant on $G$; in this case, we also denote its value at the identity by $\theta\in \mg^*$.

Every left-invariant Weyl connection $D$ can be written in terms of its Lee form $\theta\in\mg^*$, the metric $g$ and the Levi-Civita connection $\nabla^g$ as $D=\nabla^\theta$ where 
\begin{equation}
\label{eq:weylc}
\nabla^\theta_xy:=\nabla^g_xy+\theta(x)y+\theta(y)x-g(x,y)\ts,\qquad \forall\; x,y\in \mg.
\end{equation}
Here and henceforth $\ts\in \mg$ denotes the $g$-dual vector of $\theta$. Using \eqref{eq:lc} together with \eqref{eq:weylc}, the Weyl connection $\nabla^\theta$ can be explicitly defined by the following formula:
    \begin{multline}
\label{eq:LCP}
\lela\nabla^\theta_xy,z\rira=\frac12\left(\lela  [x,y],z\rira-\lela[x,z],y\rira-\lela[y,z],x)\right)\\
+\theta(x)\lela y,z\rira+\theta(y)\lela x,z\rira-\theta(z)g(x,y),\qquad \forall\; x,y,z\in \mg.
\end{multline}
Notice that $\nabla^\theta$ can be viewed as a linear map $\nabla^\theta:\mg\to \End(\mg)$, $x\mapsto \nabla_x^\theta$. 

The curvature tensor $R^\theta$ of $\nabla^\theta$ is defined as
\[
R_{x,y}^\theta=[\nabla_x^\theta,\nabla_y^\theta]-\nabla_{[x,y]}^\theta, \qquad \forall\; x,y\in\mg.
\]

A subspace $\mmu\subset \mg$ is called $\nabla^\theta$-parallel if 
\[
\nabla_x^\theta u\in \mmu, \qquad \forall\; u\in\mmu,\ \forall\; x\in \mg, 
\]and it is called $\nabla^\theta$-flat if it is $\nabla^\theta$-parallel and 
\[
R_{x,y}^\theta u=0, \qquad \forall\; u\in\mmu,\ \forall\; x,y\in \mg.
\]

We are now ready to introduce the objects of study of this paper. 

\begin{defi} \label{defi} A locally conformally product (LCP for short) Lie algebra is a quadruple $(\mg,g,\theta,\mmu)$ where $(\mg,g)$ is a metric Lie algebra, $\theta$ is a non-zero closed $1$-form on $\mg^*$, and $\mmu$ is a $\nabla^\theta$-flat proper subspace of $(\mg,g)$, that is, $0\subsetneq \mmu\subsetneq \mg$. The LCP structure $(g,\theta,\mmu)$ is called {\em adapted} if $\theta|_\mmu=0$.
\end{defi}

In \cite[Theorem 4.2]{dBM24} it was shown that any LCP structure on a unimodular Lie algebra is adapted (the assumption that the LCP structure is proper in \cite[Theorem 4.2]{dBM24} is now part of Definition \ref{defi}, which no longer considers conformally flat Lie algebras as LCP Lie algebras). In addition, from \cite[Corollary 4.10]{dBM24} it follows that unimodular LCP Lie algebras are in one-to-one correspondence with triples $(\mh, h, \beta)$, where $\mh$ is a non-unimodular Lie algebra, $h$ is an inner product on $\mh$ and $\beta$ is an orthogonal Lie algebra representation $\beta:\mh\to \so(\R^q)$, with $q\ge 1$. We briefly recall here this correspondence.  

Given any such triple $(\mh, h, \beta)$, set $\xi:=-\frac1{q}H ^\mh$, where $H^\mh\in \mh^* $ is the trace form $H^\mh(x):=\tr\ad_x$, for all $x\in\mh$. Moreover, define the representation $\alpha:\mh\to \gl(\R^q)$ as $\alpha(x):=\xi(x)\Id_{\R^q}+\beta(x)$, for all $x\in\mh$. Then, the Lie algebra 
\begin{equation}\label{mg}
    \mg:=\R^q\rtimes_\alpha \mh 
\end{equation}
with a metric $g$ such that $\R^q\bot\mh$ and $g|_{\mh}=h$, is an LCP Lie algebra with $\theta$ being the extension by zero of $\xi$ and flat factor $\mmu:=\R^q$.

Conversely, if $(\mg,g,\theta,\mmu)$ is a unimodular LCP Lie algebra, then $\mmu$ is an abelian ideal of $\mg$ and $\mh:=\mmu^\bot$ is a subalgebra so $\mg=\R^q\rtimes \mh$ and this decomposition is orthogonal. Moreover, for every $x\in\mh$, $\beta(x):=\ad_x|_\mmu-\theta(x)\Id_\mmu\in \so(\mmu)$ and one can easily check that $\beta:\mh\to \so(\mmu)$ is a Lie algebra representation.

The following result allows us to define a maximal flat factor of an LCP structure on a unimodular Lie algebra, when the metric and the Lee form are fixed.

\begin{pro}\label{pro:max}Let $(\mg,g)$ be a unimodular metric Lie algebra and let $\theta\in \mg^*$ be a non-zero closed form such that $\nabla^\theta$ is not flat on $\mg$. Then, there exists a unique abelian ideal $\mmu$ in $\mg$ such that any LCP structure $(g,\theta,\mmi)$ on $\mg$  satisfies $\mmi\subset \mmu$. 
\end{pro}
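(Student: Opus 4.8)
The plan is to take $\mmu$ to be the maximal $\nabla^\theta$-flat subspace of $\mg$ and to show that, under the stated hypotheses, it is automatically an abelian ideal, hence the maximal flat factor among all LCP structures on $\mg$ sharing the metric $g$ and the Lee form $\theta$. The first point is that such a subspace exists and is unique: if $\mmi_1,\mmi_2\subset\mg$ are $\nabla^\theta$-flat then so is $\mmi_1+\mmi_2$, since the sum of $\nabla^\theta$-parallel subspaces is $\nabla^\theta$-parallel and $R_{x,y}^\theta(u_1+u_2)=R_{x,y}^\theta u_1+R_{x,y}^\theta u_2=0$ for all $x,y\in\mg$ and $u_i\in\mmi_i$. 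As $\mg$ is finite dimensional, there is therefore a unique $\nabla^\theta$-flat subspace $\mmu$ containing every $\nabla^\theta$-flat subspace, namely the sum of all of them.

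Next, $\mmu\neq\mg$, since $\mmu=\mg$ would force $R_{x,y}^\theta z=0$ for all $x,y,z\in\mg$ (take $u=z$ in the definition of $\nabla^\theta$-flatness), contradicting the assumption that $\nabla^\theta$ is not flat. In the relevant situation $\mg$ carries at least one LCP structure $(g,\theta,\mmi)$, so $\mmu\supseteq\mmi$ is non-trivial; being $\nabla^\theta$-flat with $0\subsetneq\mmu\subsetneq\mg$, and $\theta$ being non-zero and closed, the quadruple $(\mg,g,\theta,\mmu)$ is an LCP Lie algebra in the sense of Definition \ref{defi}. Since $\mg$ is unimodular, \cite[Theorem 4.2]{dBM24} shows this LCP structure is adapted, and the description of unimodular LCP Lie algebras recalled after \cite[Corollary 4.10]{dBM24} then shows that $\mmu$ is an abelian ideal of $\mg$. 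Finally, any LCP structure $(g,\theta,\mmi)$ on $\mg$ has $\mmi$ a $\nabla^\theta$-flat proper subspace, hence $\mmi\subset\mmu$ by maximality; since $\mmu$ is itself such an ideal, it is the unique maximal one, as claimed.

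Once the unimodular structure theory of \cite{dBM24} is available the argument is short, so the difficulty lies not in the argument itself but in making sure its hypotheses apply: one must check that the candidate $\mmu$ is a genuine flat factor, i.e.\ a proper and non-trivial $\nabla^\theta$-flat subspace, before invoking that theory. Properness is immediate from the non-flatness of $\nabla^\theta$, while non-triviality simply records the fact that we are dealing with a Lie algebra actually carrying an LCP structure with the prescribed $g$ and $\theta$.
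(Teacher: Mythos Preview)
Your argument is correct and follows essentially the same route as the paper: define $\mmu$ as the sum of all $\nabla^\theta$-flat subspaces, use non-flatness of $\nabla^\theta$ to get $\mmu\subsetneq\mg$, and then invoke the unimodular structure theory of \cite{dBM24} to conclude that $\mmu$ is an abelian ideal. The only minor differences are that the paper handles the vacuous case (no LCP structure with the given $g,\theta$) explicitly by setting $\mmu=0$, whereas you fold it into a closing remark, and that the paper cites \cite[Theorem 4.9]{dBM24} together with \cite[Proposition 3.2]{AdBM} for the abelian-ideal conclusion rather than the Theorem 4.2/Corollary 4.10 route you take; both sets of references yield the same conclusion.
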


\begin{proof}
If $(\mg,g)$ does not admit any LCP structure with Lee form $\theta$, it is clear that $\mmu=0$ satisfies the requirements. In the case where $(\mg,g)$ admits an LCP structure, we define $\mmu$ as the sum of all $\nabla^\theta$-flat subspaces of $\mg$. Clearly $\mmu$ is a $\nabla^\theta$-flat space, which is strictly contained in $\mg$ by the assumption that $\nabla^\theta$ is not flat. Thus $(\mg,\theta,\mmu)$ is an LCP Lie algebra, so \cite[Theorem 4.9]{dBM24} and \cite[Proposition 3.2]{AdBM} imply that $\mmu$ is an abelian ideal.
\end{proof}

\begin{defi}
Let $\mg$ ba a unimodular Lie algebra. We say that an LCP structure $(g,\theta,\mmu)$ on $\mg$ has maximal flat factor if $\nabla^\theta$ non-flat, and $\mmu$ is the ideal given by Proposition \ref{pro:max}.
\end{defi}

The above results are motivated by the more general notion of LCP structures on Riemannian manifolds which we will now discuss.

\subsection{LCP structures on compact manifolds}\label{LCPgen}

Recall that an LCP structure on a compact manifold $M$ is a pair $(g,\theta)$ consisting on a Riemannian metric $g$ and a closed 1-form $\theta$ which is not exact, and such that the Weyl connection $\nabla^\theta$ whose Lee form with respect to $g$ is equal to $\theta$  has reducible non-trivial holonomy (see \cite[Definition 2.1]{AdBM}). 

The pull-backs of $\theta$ and $g$ to the universal cover $\tilde M$ of $M$ will be denoted by the same symbols for simplicity. On $\tilde M$ we can write $\theta=\d\varphi$ for some $\varphi\in C^\infty(\tilde M)$ which is not $\pi_1(M)$-invariant (as $\theta$ is not exact on $M$). The lift of $\nabla^\theta$ to $\tilde M$ is the Levi-Civita connection of the metric $h:=e^{2\varphi}g$, which is therefore reducible and not flat. A fundamental result of Kourganoff is the following:

\begin{teo}[{\cite[Theorem 1.5]{Kourganoff}}] \label{teo:ko} The Riemannian manifold $(\tilde M,h)$ has a de Rham decomposition
\begin{equation}\label{ko}
    (\tilde M,h) \simeq \R^q \times (N,g_N), 
\end{equation}
where $\R^q$ is a flat Euclidean space of dimension $q\geq 1$ and $(N,g_N)$ is an irreducible and incomplete Riemannian manifold of dimension at least $2$. 
\end{teo}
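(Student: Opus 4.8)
This is the theorem of Kourganoff \cite{Kourganoff}; I sketch the strategy one would follow to prove it. First I would translate the hypotheses into a purely Riemannian problem on the universal cover. Since $M$ is compact, $(\tilde M,g)$ is a complete simply connected Riemannian manifold on which $\Gamma:=\pi_1(M)$ acts freely, properly discontinuously and co-compactly by isometries of $g$. Writing $\theta=\d\varphi$ on $\tilde M$, connectedness forces $\gamma^*\varphi=\varphi+c(\gamma)$ for a constant $c(\gamma)\in\R$, so each $\gamma$ acts on $h=e^{2\varphi}g$ as a similarity of ratio $e^{c(\gamma)}$; since $\theta$ is closed but not exact on $M$, the homomorphism $c\colon\Gamma\to(\R,+)$ is non-zero, so $\Gamma$ does not act on $(\tilde M,h)$ by isometries. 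Together with the standing hypothesis that $\nabla^\theta$ has reducible, non-trivial holonomy, this reduces the theorem to the following statement: a simply connected, non-flat Riemannian manifold with reducible holonomy carrying a free, properly discontinuous, co-compact group of similarities, not all of which are isometries, splits as $\R^q\times(N,g_N)$ with $q\ge1$ and $N$ irreducible, incomplete and of dimension $\ge2$.

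Next I would invoke the de~Rham machinery. The holonomy of $h$ at a point $x$ splits $T_x\tilde M$ orthogonally as $\R^{q}\oplus V_1\oplus\cdots\oplus V_k$, acting trivially on the first summand and irreducibly and non-trivially on each $V_i$ ($i\ge1$); parallel transport turns these into $\nabla^h$-parallel distributions, and de~Rham's local theorem makes $\tilde M$ locally a Riemannian product of a flat piece and $k$ irreducible pieces. Any similarity of $h$ preserves $\nabla^h$, hence permutes these parallel distributions, so after passing to a finite-index subgroup $\Gamma'$ of $\Gamma$ we may assume each of them is $\Gamma'$-invariant; then every $\gamma\in\Gamma'$ respects the product structure and scales each local factor by the same ratio $e^{c(\gamma)}$. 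A first consequence is that $h$ cannot be complete: if it were, de~Rham's global theorem would give an honest product $\R^{q}\times N_1\times\cdots\times N_k$ of complete factors, and applying to a curved $N_i$ the classical fact that a complete Riemannian manifold admitting a non-isometric homothety is flat (with any $\gamma\in\Gamma'$ of ratio $\ne1$, whose restriction to $N_i$ has the same ratio) we would force $k=0$, i.e.\ $\tilde M=\R^{q}$, contradicting non-flatness. Hence $h$ is incomplete, and since the flat factor is complete, the incompleteness is carried by the curved directions.

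It then remains to globalise the local splitting, to show that there is exactly \emph{one} curved irreducible factor, and that $q\ge1$; note that a one-dimensional de~Rham factor has trivial holonomy and is absorbed into the flat part, so every curved factor has dimension $\ge2$, and $N$ is irreducible by construction and incomplete by the previous step. I expect this to be the main obstacle. De~Rham's global decomposition is unavailable because $h$ is incomplete, so the global product has to be produced separately, using completeness of the flat leaves together with holonomy transport; and ruling out two or more curved factors, together with proving that the non-trivial scaling recorded by $c$ is ``carried'' by a necessarily non-trivial flat factor --- rather than by the curved factors, on which $\Gamma'$ must then act by isometries --- requires a genuine analysis of the dynamics of the co-compact similarity action on the product of parallel leaves, using the incompleteness obtained above and the compactness of $M$. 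This dynamical core is precisely what makes Kourganoff's theorem deep, and it is where I would expect to spend essentially all the effort.
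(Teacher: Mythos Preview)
The paper does not prove this theorem: it is stated as \cite[Theorem 1.5]{Kourganoff} and used as a black box throughout, with no proof given. There is therefore nothing in the paper to compare your proposal against.

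That said, your sketch is a fair outline of the actual strategy in Kourganoff's paper: reduce to a co-compact similarity action on a simply connected reducible non-flat manifold, use the holonomy splitting and the impossibility of a non-isometric homothety on a complete non-flat factor to force incompleteness, and then do the hard work of globalising the product and showing $q\ge1$ and $k=1$. You are also right that the last step is where the real content lies; the dynamical analysis (Kourganoff's study of the closure of the projection of $\Gamma$ onto the non-flat factor and the ensuing foliation arguments) is the heart of the theorem and is not something one can reasonably reconstruct in a short sketch. So as a roadmap your proposal is accurate, but it is explicitly a sketch and not a proof, and the paper under review offers no proof of its own to which it could be compared.
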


The group $\Gamma:=\pi_1(M)$ acts on $\tilde M$ isometrically with respect to $g$ and by similarities with respect to $h$. Indeed, every $f\in\Gamma$ preserves $\d\varphi=\theta$, so $\d(f^*\varphi-\varphi)=0$. This shows that there exists a constant $c_f$ such that $f^*\varphi-\varphi=c_f$, whence $f^*h=e^{2c_f}h$.
Since similarities preserve the de Rham decomposition \eqref{ko}, we have $\Gamma\subset{\rm Sim}(\R^q)\times{\rm Sim}(N, g_N)$. Correspondingly, we denote each $f\in\Gamma$ by $(f_1,f_2).$

Following Kourganoff, we consider the group $\rmP:=\pr_{{\rm Sim}(N, g_N)}(\Gamma)$, in other words
\begin{equation}\label{eq:Pdefgen}
\rmP=\{f_2\in {\rm Sim}(N, g_N)\colon \exists f_1\in {\rm Sim}(\R^q) \mbox{ such that } (f_1,f_2)\in \Gamma\}.
\end{equation}

\begin{lm}\label{lm:2.5}
The projection $\pr_{{\rm Sim}(N, g_N)}$ defines an isomorphism between $\Gamma$ and $\rmP$.
\end{lm}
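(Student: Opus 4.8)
The statement is that the group homomorphism $\pi:=\pr_{\Sim(N,g_N)}|_\Gamma$ is an isomorphism onto $\rmP$. Surjectivity is exactly the definition \eqref{eq:Pdefgen} of $\rmP$, so the whole content is the injectivity of $\pi$. Before anything else I would record that $\pi$ really is a homomorphism: since $(N,g_N)$ is irreducible by Theorem \ref{teo:ko}, every similarity of the Riemannian product $\R^q\times N$ preserves each de Rham factor, so that $\Sim(\R^q\times N,h)$ is the subgroup of $\Sim(\R^q)\times\Sim(N,g_N)$ cut out by the condition that the two similarity ratios agree; in particular $\Gamma$ embeds into a direct product and $\pi$ is the restriction of the second projection.

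To prove injectivity, I would take $f=(f_1,f_2)\in\ker\pi$, i.e.\ with $f_2=\Id_N$, and argue as follows. The similarity ratio of $f$ on $(\tilde M,h)$ equals that of $f_2=\Id_N$, hence is $1$; therefore $f_1\in\Isom(\R^q)$ and the conformal factor $c_f$ vanishes, i.e.\ $\varphi\circ f=\varphi$ on $\tilde M$. The fixed-point set of $f$ on $\tilde M=\R^q\times N$ is $\mathrm{Fix}(f_1)\times N$; as $\Gamma$ acts freely, either $f=\Id$ — which is the desired conclusion — or $f_1$ is a non-trivial \emph{fixed-point-free} isometry of $\R^q$. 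It thus remains only to rule out the latter, i.e.\ to show $\ker\pi=\{\Id\}$.

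This exclusion is the crux of the proof, and the step I expect to be the main obstacle. The plan is to study the normal subgroup $\Gamma_0:=\ker\pi$ of $\Gamma$: it acts by $h$-isometries, trivially on the $N$-factor, and on $\R^q$ as a discrete group of fixed-point-free isometries, and it leaves $\varphi$ invariant, so $\varphi$ descends to the intermediate cover $\tilde M/\Gamma_0=(\R^q/\Gamma_0)\times N$, on which $\rmP=\Gamma/\Gamma_0$ still acts freely, properly discontinuously and cocompactly with quotient $M$. A non-trivial $\Gamma_0$ then has to be confronted with the rigidity of Kourganoff's splitting — the maximality of the flat de Rham factor and the incompleteness and irreducibility of $N$ — in order to reach a contradiction. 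In the Lie LCP framework that occupies the rest of the paper this last step becomes elementary: $\Gamma$ is a lattice in $G=\R^q\rtimes H$, $\tilde M=G$, and the flat factor $\R^q$ is an abelian subalgebra contained in the nilradical $\mn$ of $\mg$ on which a suitable element of $\Gamma$ acts, by conjugation, with all eigenvalues off the unit circle (this uses that the Lee form $\theta$ is non-zero and vanishes on $\mn$); hence $\R^q$ lies in the stable or unstable subspace of that element, the iterated images of any of its points converge to $0$, and so $\R^q$ can contain no non-zero point of the discrete subgroup $\Gamma\cap\exp(\mn)$. Therefore $\Gamma_0=\Gamma\cap\R^q=\{\Id\}$ and $\pi$ is injective.
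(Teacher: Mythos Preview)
Your reduction to showing $\ker\pi=\{\Id\}$ is correct, but the lemma is stated for \emph{arbitrary} LCP manifolds, and in that generality you offer only a vague appeal to ``the rigidity of Kourganoff's splitting''; this is the genuine gap. The paper does not give a self-contained argument either---it simply refers to the proof of \cite[Lemma~4.17]{Kourganoff}---but the mechanism there is precisely the dynamical idea you exploit in the Lie case: one conjugates a putative non-trivial $(f_1,\Id_N)\in\ker\pi$ by a strict similarity $\gamma'\in\Gamma$, and since the $\R^q$-component of $\gamma'$ contracts the translational part of the isometry $f_1$, a suitable sequence of non-trivial elements of the discrete group $\Gamma$ accumulates, a contradiction. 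Your proposal does not supply this step for general $N$.

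Your Lie-case argument, by contrast, is essentially correct and gives a pleasant self-contained substitute for citing Kourganoff in the only situation the paper actually needs. After identifying $\ker\pi=\Gamma\cap U$, the conformality of $\rho$ (Lemma~\ref{lm:rhoisconf}) yields $\Ad_\gamma|_U=\rho(\gamma_2)=e^{\varphi(\gamma)}O$ with $O\in\SO(q)$, so any $\gamma\in\Gamma$ with $\varphi(\gamma)\ne0$ acts on $U$ with all eigenvalues of modulus $\ne1$; the discrete $\Ad_\gamma$-invariant subgroup $\Gamma\cap U$ is then forced to be trivial. Two quibbles: the detour through the nilradical $\mn$ and the group $\Gamma\cap\exp(\mn)$ is superfluous, and the parenthetical ``$\theta$ vanishes on $\mn$'' is neither what you use nor obvious in general---what you actually need is $\theta|_\mmu=0$ (adaptedness of the unimodular LCP structure) together with $\varphi(\Gamma)\ne\{0\}$ (non-exactness of the Weyl connection).
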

\begin{proof} This fact is a consequence of the proof of Lemma 4.17 in \cite{Kourganoff}. Notice that the proof of that lemma is incomplete, but it is sufficient to show our claim by taking $u''$ there as the identity on $N$. \end{proof}

We denote by $\bar \rmP$ the closure of $\rmP$ in ${\rm Sim}(N,g_N)$ and by $\bar \rmP_0$ its connected component of the identity. The following result was already used by Kourganoff \cite{Kourganoff}. We give here the proof for the reader's convenience.

\begin{lm}\label{5.2} Let $\rmP$ be a subgroup of a  Lie subgroup of a Lie group $G$ and let $\bar \rmP_0$ denote the connected component of the identity of the closure of $\rmP$ in $G$. Then for every sequence $(p_n)_{n\ge 0}$ in $\rmP$ converging to $p\in\bar \rmP_0$, there exists $n_0\in\N$ such that $p_n\in \bar \rmP_0\cap \rmP$ for every $n\ge n_0$. That is, $\overline{\rmP \cap {\bar\rmP}_0}={\bar\rmP}_0$.
\end{lm}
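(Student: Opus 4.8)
The statement is really a general fact about topological groups: the identity component $\bar\rmP_0$ of the closure $\bar\rmP$ of a subgroup $\rmP$ of a Lie group $G$ is open in $\bar\rmP$, since $\bar\rmP$ is itself a Lie group (a closed subgroup of $G$) and the identity component of any Lie group is open in it. So the plan is: first recall (or invoke) Cartan's closed subgroup theorem to say that $\bar\rmP$ is an embedded Lie subgroup of $G$, hence a Lie group in the subspace topology; consequently $\bar\rmP_0$ is an open subgroup of $\bar\rmP$. Then, given a sequence $(p_n)$ in $\rmP$ with $p_n\to p\in\bar\rmP_0$, openness of $\bar\rmP_0$ in $\bar\rmP$ provides an open neighbourhood $U$ of $p$ in $\bar\rmP$ with $U\subset\bar\rmP_0$; since $p_n\to p$ there is $n_0$ with $p_n\in U$ for $n\ge n_0$, and also $p_n\in\rmP$, so $p_n\in\bar\rmP_0\cap\rmP$ for all $n\ge n_0$, which is the first assertion.

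For the second assertion, that $\overline{\rmP\cap\bar\rmP_0}=\bar\rmP_0$, one inclusion is immediate: $\rmP\cap\bar\rmP_0\subset\bar\rmP_0$ and $\bar\rmP_0$ is closed in $\bar\rmP$ (the identity component is always closed), hence $\overline{\rmP\cap\bar\rmP_0}\subset\bar\rmP_0$. For the reverse inclusion, take any $p\in\bar\rmP_0\subset\bar\rmP=\overline{\rmP}$; then there is a sequence $(p_n)$ in $\rmP$ converging to $p$, and by the first part $p_n\in\rmP\cap\bar\rmP_0$ for $n$ large, so $p$ lies in the closure of $\rmP\cap\bar\rmP_0$. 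This gives $\bar\rmP_0\subset\overline{\rmP\cap\bar\rmP_0}$ and hence equality.

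There is essentially no obstacle here: the only mild subtlety is to make sure one uses that $\bar\rmP$, and not $G$ itself, is the ambient Lie group in which $\bar\rmP_0$ is \emph{open} — in $G$ the subgroup $\bar\rmP_0$ need not be open. One could phrase the whole argument slightly more carefully by noting that metrizability of $G$ (it is a Lie group, hence locally metrizable, and we only argue locally near $p$) justifies working with sequences rather than nets; alternatively one avoids sequences entirely and argues purely with the open neighbourhood $U$. I would present the sequence version since that is the form in which the lemma is used later.
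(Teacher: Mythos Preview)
Your proof is correct and uses the same key observation as the paper: $\bar\rmP$ is a Lie group (as a closed subgroup of $G$), so its identity component $\bar\rmP_0$ is open in $\bar\rmP$. The paper phrases this as a contradiction argument (if some $p_n$ stayed outside $\bar\rmP_0$, the limit would lie in the closed complement $\bar\rmP\setminus\bar\rmP_0$), while you give the direct version via an open neighbourhood; these are the same argument. You also spell out the density claim $\overline{\rmP\cap\bar\rmP_0}=\bar\rmP_0$ explicitly, which the paper leaves implicit in the ``That is''.
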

\begin{proof} If the conclusion does not hold, there exists a subsequence $p_{k_n}$ in $\rmP\setminus \bar \rmP_0$ converging to  some $p\in \bar \rmP_0$. But $p_{k_n}\in \bar \rmP\setminus \bar \rmP_0$ which is closed in $\bar \rmP$ (since $\bar \rmP$ is a Lie group, and the connected component of the identity in a Lie group is open). This implies that $p=\lim_{n\to\infty}p_{k_n}\in \bar \rmP\setminus \bar \rmP_0$, which is a contradiction.
\end{proof}

Set $\Gamma_0:=\Gamma\cap ({\rm Sim}(\R^q)\times \bar\rmP_0)$. By \cite[Lemma 4.18]{Kourganoff}, $\Gamma_0$ is contained in $\R^q\times \bar\rmP_0$ where the inclusion $\R^q\subset {\rm Sim}(\R^q)$ is given by the translations. We recall the following result:

\begin{pro}\cite[Lemmas 4.1 and 4.13]{Kourganoff}\label{pro:Kp0g0gen}
The group $\bar \rmP_0$ is a connected abelian Lie subgroup in ${\rm Iso}(N,g_N)$ and $\Gamma_0$ is a lattice in $\R^q\times \bar\rmP_0$.
\end{pro}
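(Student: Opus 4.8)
The statement packages two results of Kourganoff \cite{Kourganoff}; the plan is to establish, in turn, that $\bar\rmP_0$ is a Lie subgroup of ${\rm Iso}(N,g_N)$, that it is abelian, and that $\Gamma_0$ is a lattice in $\R^q\times\bar\rmP_0$, connectedness of $\bar\rmP_0$ being immediate as it is an identity component. By the Myers--Steenrod theorem ${\rm Iso}(N,g_N)$ is a Lie group acting properly on $N$, and since ${\rm Sim}(N,g_N)$ is the extension of a subgroup of $(\R_{>0},\cdot)$ by ${\rm Iso}(N,g_N)$ via the dilation ratio character $\lambda$, it is a Lie group as well; hence the closed subgroup $\bar\rmP$, and its identity component $\bar\rmP_0$, are Lie subgroups. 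To see that $\bar\rmP_0$ acts by isometries, restrict $\lambda$ to the connected group $\bar\rmP_0$: the image is a connected subgroup of $\R_{>0}$, so it is either trivial or everything. Now a similarity $\gamma=(\gamma_1,\gamma_2)$ of the product $(\tilde M,h)=\R^q\times N$ scales $h$, $g_{\R^q}$ and $g_N$ by one and the same factor, so $\gamma\mapsto\lambda(\gamma_2)$ is a homomorphism $\rho\colon\Gamma\to\R_{>0}$ whose image is finitely generated (as $\Gamma=\pi_1(M)$ is), hence a finite-rank free abelian subgroup of $\R_{>0}$, and it is nontrivial because $\theta$ is not exact. If $\lambda|_{\bar\rmP_0}$ were onto, then, combining Lemmas \ref{5.2} and \ref{lm:2.5}, one would produce $\gamma\in\Gamma$ whose $N$-component is arbitrarily close to the identity of $\bar\rmP_0$ while $\rho(\gamma)$ stays bounded away from $1$; the associated similarities $\gamma_1$ of $\R^q$, having ratio bounded away from $1$, then have a fixed point $x_0$, and the dynamics of such $\gamma$ near the fibres $\{x_0\}\times N$ would contradict the proper discontinuity and co-compactness of the $\Gamma$-action on $\tilde M$. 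Hence $\lambda|_{\bar\rmP_0}$ is trivial and $\bar\rmP_0\subset{\rm Iso}(N,g_N)$.

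The fact that $\bar\rmP_0$ is abelian is the deep point, and it is where the Riemannian geometry of $N$ must genuinely be used. The mechanism is that the incompleteness of $N$, together with the maximality of the flat factor in the de Rham splitting \eqref{ko}, furnishes a canonical geometric datum on $N$ — morally, the ``direction of incompleteness'' encoded in the behaviour of the conformal factor $\varphi$ along $N$ — which is preserved by all of $\bar\rmP_0$; a connected non-abelian group of isometries of $N$ compatible with this datum would generate an extra invariant parallel distribution, contradicting the irreducibility of $(N,g_N)$. One concludes that $\bar\rmP_0$, and therefore $W:=\R^q\times\bar\rmP_0$, is a connected abelian Lie group.

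Finally, $\Gamma_0$ is discrete, being a subgroup of the discrete group $\Gamma$, and it lies in $W$ by \cite[Lemma~4.18]{Kourganoff}; it remains to show $\Gamma_0\bs W$ is compact. By the two previous steps $W$ acts on $(\tilde M,h)$ preserving the splitting — by translations on the $\R^q$ factor and through $\bar\rmP_0\subset{\rm Iso}(N,g_N)$ on $N$ — and this action is proper, so its orbits are closed embedded submanifolds with compact stabilisers. Picking such an orbit $\mathcal O\subset\tilde M$, the set $\Gamma\cdot\mathcal O$ is closed in $\tilde M$ by proper discontinuity of $\Gamma$, hence its image in $M=\Gamma\bs\tilde M$ is compact; identifying this image, up to compact factors, with $\Gamma_0\bs W$, the compactness of $M$ forces $\Gamma_0\bs W$ to be compact, so $\Gamma_0$ is a lattice in $W$. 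One should stress that $\Gamma_0$ is \emph{not} of finite index in $\Gamma$ — its index surjects onto the free abelian group $\rho(\Gamma)$ of rank $\ge 1$ — so compactness of $M$ cannot be exploited more directly than this.

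The main obstacle is, by far, the abelianness of $\bar\rmP_0$, and to a lesser extent the fact that it acts by isometries: both require abandoning the purely group-theoretic picture and exploiting, in an essential way, the rigidity of the irreducible incomplete factor $N$ together with the compatibility of the $\Gamma$-action with the LCP data — this is exactly Kourganoff's technical core, whereas the Lie-group and lattice assertions are comparatively formal once it is in hand.
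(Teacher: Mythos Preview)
The paper does not supply a proof here: Proposition~\ref{pro:Kp0g0gen} is stated as a direct citation of \cite[Lemmas~4.1 and~4.13]{Kourganoff} and used as a black box throughout. Your write-up is therefore not competing with any argument in the paper; it is an attempt to summarise Kourganoff's original proofs.

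As such a summary, it has real gaps. In the isometry step the phrase ``$\gamma\in\Gamma$ whose $N$-component is arbitrarily close to the identity of $\bar\rmP_0$ while $\rho(\gamma)$ stays bounded away from~$1$'' is self-contradictory: the identity of $\bar\rmP_0$ is the identity of ${\rm Sim}(N,g_N)$, and if $\gamma_2$ is close to it then $\lambda(\gamma_2)=\rho(\gamma)$ is close to~$1$. If you instead meant to approximate a fixed $p\in\bar\rmP_0$ with $\lambda(p)\ne 1$ by elements of $\rmP$, the corresponding similarities $\gamma_1$ of $\R^q$ have ratios near $\lambda(p)$ but no reason to share a single fixed point~$x_0$, so the ``dynamics near $\{x_0\}\times N$'' contradiction is unfounded. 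For abelianness you give only a heuristic (``morally, the direction of incompleteness\ldots''); this is precisely the hard technical core of \cite{Kourganoff}, and nothing you wrote constitutes an argument. For the lattice claim, the identification of the image of $\Gamma\cdot\mathcal O$ in $M$ with $\Gamma_0\bs W$ ``up to compact factors'' is not justified: the set-stabiliser of a $W$-orbit $\mathcal O$ in $\Gamma$ may strictly contain $\Gamma_0$, since an element $\gamma$ with $\gamma_2\notin\bar\rmP_0$ can still preserve a particular $\bar\rmP_0$-orbit in $N$; what you get is a priori only a further quotient of $\Gamma_0\bs W$, and compactness of a quotient does not imply compactness of the space. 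Since the paper is content to invoke Kourganoff, the cleanest course is to do likewise rather than to present a plausibility narrative as a proof.
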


Note that throughout this paper, by lattice we mean a discrete co-compact subgroup.

\begin{defi}
The {\em characteristic group} of an LCP structure $(g,\theta)$ on the compact manifold $M$ is the connected abelian Lie group $\R^q\times \bar\rmP_0\subset {\rm Iso}( \R^q)\times {\rm Iso}(N, g_N)$. The {\em reduced characteristic group} is the connected abelian Lie group $\bar\rmP_0\subset {\rm Iso}(N, g_N)$.
\end{defi}

A central question in the theory of LCP manifolds is the following:

\begin{question}[{\cite[Remark 5.8]{Fl25}}]\label{quiz} Given an LCP structure on a compact manifold, is its characteristic group simply connected?    
\end{question}

Note that if this question holds true, then one can describe the corresponding LCP structures in terms of some {\em admissible data} (see \cite[Thm. 4.4 and 4.10] {Fl25} for details). We will investigate this question in the particular case of Lie LCP manifolds, which we shall introduce next.

\medskip

\begin{defi} \label{lielcp}
Let $(M,g,\theta)$ be an LCP manifold. 
If $\tilde M=G$ is a simply connected Lie group, the pull-backs to $G$ of $g$ and $\theta$ are left-invariant, and $\pi_1(M)=:\Gamma$ is a lattice of $G$ acting by left multiplication, the structure $(g,\theta)$ on $M=\Gamma\bs G$ is called a {\em Lie LCP structure} (see \cite[Definition 4.8]{FM25}). 
\end{defi}

The interest of this class of LCP structures is that it can be characterized algebraically: 

\begin{pro}\label{pro:11}
    There is a one-to-one correspondence between Lie LCP structures on $M:=\Gamma\bs G$ and LCP structures with flat maximal factor on the Lie algebra $\mg$ of $G$.
\end{pro}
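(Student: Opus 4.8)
The plan is to establish the correspondence in two directions, first showing that a Lie LCP structure on $M=\Gamma\bs G$ produces an LCP structure with maximal flat factor on $\mg$, and then that such an algebraic structure produces a Lie LCP manifold after choosing a lattice. For the forward direction, start with the Lie LCP manifold $(M,g,\theta)$. Since $G$ is the universal cover acting by left translations, the pull-backs of $g$ and $\theta$ are left-invariant, and the Weyl connection $\nabla^\theta$ of \eqref{eq:weylc} on $G$ is exactly the pull-back of the Weyl connection on $M$. By Theorem \ref{teo:ko}, the lift $(G,h=e^{2\varphi}g)$ decomposes isometrically as $\R^q\times(N,g_N)$ with $N$ irreducible, which on the level of the left-invariant Weyl connection means that $\nabla^\theta$ has a nontrivial $\nabla^\theta$-flat parallel distribution. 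Its value at the identity is a $\nabla^\theta$-flat proper subspace $\mmu\subset\mg$, so $(\mg,g,\theta,\mmu)$ is an LCP Lie algebra in the sense of Definition \ref{defi}. Because $G$ is a lattice quotient, $G$ is unimodular, hence $\mg$ is unimodular; then Proposition \ref{pro:max} provides the unique maximal abelian ideal $\mmu_{\max}$ containing every $\nabla^\theta$-flat subspace, and one replaces $\mmu$ by $\mmu_{\max}$ (which is harmless, since the resulting structure has the same $g$ and $\theta$). Thus we obtain an LCP structure with maximal flat factor on $\mg$, and it is canonically determined by $(M,g,\theta)$ since $g,\theta$ are determined up to the left-invariant identification.

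For the reverse direction, suppose $(g,\theta,\mmu)$ is an LCP structure with maximal flat factor on the unimodular Lie algebra $\mg$. As recalled after Definition \ref{defi}, $\mmu$ is then an abelian ideal, $\mh:=\mmu^\perp$ is a subalgebra, $\mg=\R^q\rtimes_\alpha\mh$ with $\alpha$ a conformal representation, and $\mh$ is non-unimodular. Let $G$ be the simply connected Lie group with Lie algebra $\mg$, equipped with the left-invariant metric $g$ and the left-invariant closed $1$-form $\theta$ (closedness of $\theta$ as a form on $\mg^*$ is equivalent to closedness of the induced left-invariant form on $G$). One must produce a lattice $\Gamma\subset G$ so that the quotient $\Gamma\bs G$ is compact; this is where the unimodularity is essential, but one also needs to ensure the Weyl structure descends to a genuine \emph{non-exact, non-flat, reducible} LCP structure on $M$. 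Non-flatness is the hypothesis on $\nabla^\theta$; reducibility (i.e. the reducible non-trivial holonomy condition of \S\ref{LCPgen}) follows from $\mmu$ being a proper nonzero $\nabla^\theta$-flat parallel subspace, which lifts to a parallel distribution on $G$; and non-exactness of $\theta$ on $M$ has to be arranged by choosing $\Gamma$ so that $\theta$ is not exact on the quotient, i.e. so that the cohomology class of $\theta$ in $H^1(\Gamma\bs G)$ is nonzero — equivalently, $\theta$ should not vanish on $\Gamma$, or more precisely the homomorphism $\Gamma\to\R$ obtained by integrating $\theta$ should be nontrivial. Since $\theta$ restricted to $\mmu=\R^q$ is zero and $\theta$ is a nonzero character of $\mh$, one expects to invoke the existence of lattices in $\R^q\rtimes H$ coming from this semi-direct product structure (as is done in the examples of \cite{AdBM,dBM24}), where $\Gamma$ meets the flat $\R^q$ in a lattice and projects to a lattice of a suitable subgroup of $H$ on which $\theta$ does not integrate to zero.

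The main obstacle I anticipate is precisely the construction, or rather the bookkeeping, on the lattice side: one needs a lattice $\Gamma\subset G$ whose existence is compatible with the algebraic data and for which $\theta$ descends to a non-exact form. In fact I suspect the cleanest route is to observe that the statement of Proposition \ref{pro:11} is really a bijection between \emph{isomorphism classes with the lattice remembered as part of the data} versus \emph{algebraic structures}, or — more likely, reading the paper's phrasing carefully — that the correspondence is set up so that a Lie LCP structure already comes with its lattice, and conversely one only needs to know that \emph{some} lattice exists making the structure genuinely LCP (not flat, not exact). So the non-exactness should be packaged as: the left-invariant closed $1$-form $\theta$ is never exact on a compact quotient $\Gamma\bs G$ unless $\theta=0$, because an exact primitive would be a $\Gamma$-invariant function on $G$ with left-invariant differential $\theta$, forcing $\theta$ to vanish on the commutator and, by compactness plus properness, globally; I would make this precise using that $\theta$ is a Lie algebra character and that $\int_{\Gamma\bs G}\d(e^f\cdot)$-type arguments or Stokes force $\theta|_\Gamma=0$, contradicting discreteness/cocompactness when $\theta\ne0$. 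Modulo this point and the invocation of \cite[Corollary 4.10]{dBM24} for the structure theory on the algebraic side, the two constructions are visibly inverse to each other, since both amount to passing between the left-invariant tensors $g,\theta$ on $G$ and their values $g,\theta$ on $\mg$, together with the parallel flat distribution and its value $\mmu$, with maximality of $\mmu$ pinned down on both sides by Proposition \ref{pro:max} and Theorem \ref{teo:ko} respectively.
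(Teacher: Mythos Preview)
Your principal misreading is the role of the lattice. The proposition concerns a \emph{fixed} compact quotient $M=\Gamma\bs G$: it asserts a bijection between the set of Lie LCP structures $(g,\theta)$ on this given $M$ and the set of LCP structures $(g,\theta,\mmu)$ with maximal flat factor on $\mg$. You never have to construct $\Gamma$; it is part of the data. Your entire discussion of producing a lattice, arranging non-exactness via integration over $\Gamma$, and invoking examples is therefore beside the point. (Non-exactness of a nonzero closed left-invariant $1$-form on any compact quotient is in any case automatic and elementary, and is absorbed into the cited \cite[Proposition 2.4]{AdBM}.) You eventually suspect this yourself, but never commit to it, and the resulting proposal spends most of its effort on a non-issue.

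The genuine mathematical content you are missing is the argument for bijectivity. In the forward direction you take the flat distribution from Kourganoff, evaluate at $e$, and then \emph{replace} the resulting $\mmu$ by $\mmu_{\max}$ from Proposition~\ref{pro:max}; in the last paragraph you assert the two constructions are ``visibly inverse'' with maximality ``pinned down'' by Proposition~\ref{pro:max} and Theorem~\ref{teo:ko}. But Theorem~\ref{teo:ko} does not by itself say that the value of $T\R^q$ at $e$ is the \emph{maximal} $\nabla^\theta$-flat subspace of $\mg$; this requires an argument. The paper supplies it via a dimension comparison: any LCP structure $(g,\theta,\mmi)$ on $\mg$ yields a left-invariant $\nabla^\theta$-flat distribution on $G$, which must sit inside the flat factor of the de Rham decomposition, so $\dim\mmi\le q$; conversely, since left translations are isometries and hence preserve the de Rham decomposition, $T\R^q$ is itself left-invariant and gives $\mmu$ with $\dim\mmu=q$. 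These two facts together show both that the $\mmu$ coming from Kourganoff is already maximal (no replacement needed) and that the two maps are mutually inverse. Without this step your proof does not close.
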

\begin{proof}
    In \cite[Proposition 2.4]{AdBM} we have shown that if $\Gamma$ is a lattice in a simply connected Lie group $G$ with Lie algebra $\mg$, then
there is a correspondence between LCP structures on $\mg$ and Lie LCP structures on $M:=\Gamma\bs G$. We recall here how this correspondence is defined, and show that it is one-to-one if one restricts to LCP structures on $\mg$ with flat maximal factor.

Every LCP structure $(g,\theta,\mmu)$ on $\mg$ determines in an obvious way a Lie LCP structure $(g,\theta)$ on $\Gamma\bs G$ such that the left-invariant distribution determined on $G$ by $\mmu$ is $\nabla^\theta$-flat, and thus contained in the flat distribution $T\R^q\subset T G$. In particular, this shows that 
\begin{equation}\label{uq}
    \dim(\mmu)\leq q.
\end{equation}

Conversely, a Lie LCP structure $(g,\theta)$ on $M=\Gamma\bs G$, determines in an obvious way a scalar product $g\in\Sym^2\mg^*$ and a non-zero closed 1-form $\theta\in\mg^*$. Moreover, the flat distribution $T\R^q$ is left-invariant on $G$ (as left translations are isometries, and every isometry preserves the flat distribution in the de Rham decomposition), so it induces a subspace $\mmu\subset \mg$ which is clearly $\nabla^\theta$-flat and satisfies $\dim(\mmu)=q$. Thus $(g,\theta,\mmu)$ is an LCP structure on $\mg$. This structure has maximal flat factor, since if $\mmu$ were not maximal, there would exist an LCP structure $(g,\theta,\tilde\mmu)$ on $\mg$ with $\dim\tilde \mmu>\dim(\mmu)=q,$ thus contradicting \eqref{uq}.
\end{proof}

\section{semi-direct products defined by conformal representations}

We have seen in \eqref{mg} that every LCP Lie algebra can be expressed in a canonical way as a semi-direct product of Lie algebras. Correspondingly, the universal cover $G$ of any Lie LCP manifold $M=\Gamma\bs G$ can be realized as a semi-direct product of Lie groups, defined by a conformal representation (see Definition \ref{conformal} below). We will therefore study in more detail such "conformal" direct products, and show that the conformal class of the natural left-invariant metric carries a Riemannian product metric. We then apply this result in order to make explicit the two factors in Kourganoff's decomposition \eqref{ko} of the universal cover of any Lie LCP manifold.

We start with a general setting. Let $U$ and $H$ be Lie groups with Lie algebras $\mmu$ and $\mh$, respectively. Assume that $g_\mmu$ and $g_\mh$ are inner products on $\mmu$ and $\mh$, respectively, which will also denote the induced left-invariant metrics on the corresponding Lie group.

Given a Lie group representation $\rho:H\to \Aut(U)$, we denote $\alpha:\mh\to\Aut(\mmu)$ the induced Lie algebra representation.

We consider the semi-direct product of these Lie groups via $\rho$, $G=U\rtimes_\rho H$, whose product law is given by
\begin{equation}\label{eq:psd}
(u,x)(u',x')=(u\rho(x)u',xx'), \qquad \forall\; u,u'\in U,\ \forall\; x,x'\in H.
\end{equation}
In general, we will denote $ux$ the pair $(u,x)\in G$. Notice that, with this notation,  $xux^{-1}=\rho(x)u$ for all $u\in U$, $x\in H$.

Recall that the Lie algebra of $G$ is $\mg=\mmu\rtimes_\alpha \mh$. Let $g$ denote the left invariant metric on $G$ induced by $g_\mmu+g_\mh$ on $\mg$. 
\begin{defi}\label{conformal}
We say that a representation $\rho:H \to \Aut(U)$ is conformal if 
\begin{equation}\label{eq:cr}
\rho(x)^*g_\mmu=e^{2\varphi(x)}g_\mmu, \qquad \forall\; x\in H,
\end{equation}for some positive function $\varphi\in C^\infty(H)$, which is called the conformal function of $\rho$.
\end{defi}
\begin{remark}\label{rem:gu}
Notice that since $g_\mmu$ is left-invariant in $U$, $\rho$ being conformal is equivalent to $\d(\rho(x))_e^*g_\mmu=e^{2\varphi(x)}g_\mmu$ for all $x\in H$, where, in the last equality,  $g_\mmu$ is the inner product in $\mmu$.
\end{remark}
 
Using the fact that $\rho$ is a representation, it is straightforward to check that the conformal function $\varphi$ in \eqref{eq:cr} is a group homomorphism from $H$ to $(\R, +)$.

\begin{pro}\label{pro:sdisom}
If $\rho:H\to \Aut(U)$ is a conformal representation with conformal function $\varphi$, then the map $f:U\times H\to G$, defined by $f(u,x)=ux$, is an isometry between the Riemannian manifolds $(U\times H,h':=g_\mmu +e^{2\varphi}g_\mh)$ and $(G,h:=e^{2\varphi}g)$.
\end{pro}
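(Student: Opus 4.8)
The plan is to check directly that $f$ pulls the metric $h = e^{2\varphi}g$ back to $h' = g_\mmu + e^{2\varphi}g_\mh$, by computing the differential of $f$ at an arbitrary point $(u,x)\in U\times H$ and comparing the two metrics on tangent vectors. Since $h$ and $h'$ are both defined by conformal rescalings of left-invariant metrics, and $f$ is clearly a diffeomorphism (it is essentially the identity map on the underlying set $U\times H = G$), the whole content is an infinitesimal computation, which it is natural to transport to the identity using left translations.

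First I would fix $(u,x)$ and identify $T_{(u,x)}(U\times H) \cong \mmu\oplus\mh$ via left translation on each factor (using $L_u$ on $U$ and $L_x$ on $H$), and similarly identify $T_{f(u,x)}G = T_{ux}G \cong \mg = \mmu\rtimes_\alpha\mh$ via left translation by $ux$ in $G$. Under these identifications I would compute the differential $\d f_{(u,x)}\colon \mmu\oplus\mh\to\mg$. Using the product law \eqref{eq:psd}, a left-invariant vector $v\in\mmu$ on the $U$-factor is sent to $v\in\mmu\subset\mg$ (the $U$-factor embeds as a subgroup on the left), while a left-invariant vector $w\in\mh$ on the $H$-factor is sent, after left-translating back to the identity of $G$, to $\Ad_{u^{-1}}(w)$ — more precisely, differentiating $t\mapsto (u\rho(x)\rho(\exp tw)\cdots, x\exp tw)$ and translating by $(ux)^{-1}$, one gets $w$ plus a correction term lying in $\mmu$ coming from the $\rho$-action, i.e. $\d f_{(u,x)}(v,w) = \big(v + \Phi_{u,x}(w),\, w\big)$ for some linear map $\Phi_{u,x}\colon\mh\to\mmu$. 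The precise form of $\Phi_{u,x}$ is where the semidirect structure enters, but the key structural point is only that the $\mh$-component of $\d f_{(u,x)}(v,w)$ is exactly $w$, and that the map is block-triangular with respect to the splitting $\mg = \mmu\oplus\mh$.

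Next I would evaluate $h$ on $\d f_{(u,x)}(v,w)$. Since $h = e^{2\varphi}g$ and $g$ is left-invariant with $\mmu\perp\mh$ and $g|_\mmu = g_\mmu$, $g|_\mh = g_\mh$, one gets
\begin{equation*}
h\big(\d f_{(u,x)}(v,w),\d f_{(u,x)}(v,w)\big) = e^{2\varphi(ux)}\Big(g_\mmu\big(v+\Phi_{u,x}(w),\, v+\Phi_{u,x}(w)\big) + g_\mh(w,w)\Big).
\end{equation*}
Here $\varphi(ux) = \varphi(x)$ since $\varphi$ factors through the projection $G\to H$ (it is a homomorphism vanishing on $U$). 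The remaining task is to show that the $\mmu$-part, after multiplication by $e^{2\varphi(x)}$, reproduces $g_\mmu(v,v)$ and not $e^{2\varphi(x)}g_\mmu(v,v)$ — in other words, that the conformal factor on the $\mmu$-direction is absorbed. This is exactly where the conformality hypothesis \eqref{eq:cr} is used: the correction term $\Phi_{u,x}$ and the factor $e^{\varphi(x)}$ conspire so that $v\mapsto e^{\varphi(x)}(v+\Phi_{u,x}(w))$ is, for fixed $x$ and $w$, an affine map whose linear part is an isometry $\mmu\to\mmu$ — precisely because $\d(\rho(x))_e$ scales $g_\mmu$ by $e^{2\varphi(x)}$ (Remark \ref{rem:gu}). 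Carrying this through, the cross terms involving $w$ rearrange into contributions matching $h' = g_\mmu + e^{2\varphi}g_\mh$ on $(v,w)$.

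The main obstacle is bookkeeping: correctly computing $\d f_{(u,x)}$ — in particular the correction term $\Phi_{u,x}$ coming from differentiating the twisted product $u\rho(x)u'$ — and verifying that, once the conformal rescaling $e^{2\varphi}$ is applied, the $u$-dependence and the $w$-dependent cross-terms cancel out to leave precisely the product metric $g_\mmu + e^{2\varphi}g_\mh$. It is cleaner to avoid choosing explicit bases and instead argue abstractly: observe that for each fixed $x\in H$, the map $U\to G$, $u\mapsto ux$ is the composition of the inclusion $U\hookrightarrow G$ with right translation $R_x$, and that $R_x^* h$ restricted to the $U$-orbit is $e^{2\varphi(x)} g|_{\mmu\text{-directions}}$, which by \eqref{eq:cr} equals $g_\mmu$ transported by the $U$-action; combined with the fact that $f$ intertwines the obvious left $U$-actions and the obvious $H$-actions, one reduces the verification to the tangent space at a single point $(e,e)$, where $\d f_{(e,e)} = \Id_{\mmu\oplus\mh}$ and the identity $h'(v,w) = h(v,w)$ at the origin is immediate from $\varphi(e)=0$. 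Equivariance then propagates the equality to all of $U\times H$.
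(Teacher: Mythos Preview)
Your direct computation of $\d f_{(u,x)}$ has the block structure backwards, and this is a genuine error, not just bookkeeping. Differentiating $f(u\exp tV,x)=(u\exp tV,x)$ and translating by $L_{(u,x)}^{-1}$ in $G$ gives $(\d(\rho(x^{-1}))_eV,0)$, \emph{not} $(V,0)$: one has $(u\exp tV,x)=(u,x)\cdot(\rho(x^{-1})\exp tV,1)$. Conversely, $f(u,x\exp tX)=(u,x\exp tX)=(u,x)\cdot(1,\exp tX)$, so $(0,X)\mapsto(0,X)$ with \emph{no} $\mmu$-correction term $\Phi_{u,x}$. Thus, in the left-invariant trivializations, $\d f_{(u,x)}=\begin{pmatrix}\d\rho(x^{-1})_e&0\\0&\Id_\mh\end{pmatrix}$, not the unipotent matrix you wrote. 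This matters: with your claimed form one would get $e^{2\varphi(x)}g_\mmu(V,V)$ on the $\mmu$-part (taking $X=0$), which does \emph{not} equal $g_\mmu(V,V)$, so no amount of ``conspiring'' of cross terms can save it. With the correct form, the conformality hypothesis enters cleanly as $e^{2\varphi(x)}g_\mmu(\d\rho(x^{-1})V,\d\rho(x^{-1})V)=e^{2\varphi(x)}e^{-2\varphi(x)}g_\mmu(V,V)=g_\mmu(V,V)$, and the cross terms vanish because the matrix is block-diagonal and $\mmu\perp\mh$. This is exactly the paper's computation.

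Your equivariance shortcut is morally right but underspecified: the ``obvious $H$-action'' on $U\times H$ that $f$ intertwines with left translation on $G$ is the \emph{twisted} action $x_0\cdot(u,x)=(\rho(x_0)u,\,x_0x)$, not the product action. Once you say this, both sides transform by the same similarity factor $e^{2\varphi(x_0)}$ (on the $U$-side precisely by the conformality of $\rho$), and together with the $U$-equivariance the reduction to $(e,e)$ is legitimate.
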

Notice that if the conformal function $\varphi$ is non-constant, then the metrics $h$ and $h'$ are not left-invariant. 
\begin{proof}
For every $V\in\mmu$, $X\in \mh$, $u\in U$, and $x\in H$, we have:
\begin{eqnarray*}
\d f_{(u,x)}(\d (L_u)_e V, 0)&=& \frac{d}{dt}|_{t=0}f(u\exp tV,x) =\frac{d}{dt}|_{t=0}ux\rho(x^{-1})(\exp tV)\\&=&\d (L_{ux})_e\d(\rho(x^{-1}))_eV\\
\d f_{(u,x)}(0,\d (L_x)_e X)&=& \frac{d}{dt}|_{t=0}f(u,x\exp tX) =\frac{d}{dt}|_{t=0}ux\exp tX=\d (L_{ux})_eX.
\end{eqnarray*}

Using these relations together with \eqref{eq:cr} we get:
\begin{eqnarray*}
(f^*h)_{(u,x)}((\d (L_{u})_e V,0),(\d (L_{u})_e V,0))&=&h_{ux}(\d (L_{ux})_e\d(\rho(x^{-1}))_eV,\d (L_{ux})_e\d(\rho(x^{-1}))_eV)\\
&=&e^{2\varphi(x)}g_e(\d(\rho(x^{-1}))_eV,\d(\rho(x^{-1}))_eV)\\
&=&e^{2\varphi(x)}e^{2\varphi(x^{-1})}g_e(V,V)\\
&=&g_\mmu(V,V)=(g_\mmu)_u(d(L_u)_eV,d(L_u)_eV)\\
&=&h'_{(u,x)}
((\d (L_{u})_e V,0),(\d (L_{u})_e V,0))
.
\end{eqnarray*}

Similarly, one has:
\begin{eqnarray*}
(f^*h)_{(u,x)}((0,\d (L_{x})_e X),(0,\d (L_{x})_e X))&=&h_{ux}(\d (L_{ux})_eX,\d (L_{ux})_eX)\\
&=&e^{2\varphi(x)}g(X,X)\\
&=&e^{2\varphi(x)}(g_\mh)_{x}(\d (L_{x})_e X,\d (L_{x})_e X)\\
&=&h'_{(u,x)}((0,\d (L_{x})_e X),(0,\d (L_{x})_e X)).
\end{eqnarray*}

Finally, we compute:
\begin{eqnarray*}
(f^*h)_{(u,x)}((\d (L_{u})_e V,0),(0,\d (L_{x})_e X))&=&h_{ux}(\d (L_{ux})_e\d(\rho(x^{-1}))_eV,\d (L_{ux})_eX)\\
&=&e^{2\varphi(x)}g(\d(\rho(x^{-1}))_eV,X)=0\\
&=&h'_{(u,x)}((\d (L_{u})_e V,0),(0,\d (L_{x})_e X)).
\end{eqnarray*}Therefore $f^*h=h'$ as claimed. 
\end{proof}

\begin{lm}\label{lm:phihom} Assume $\varphi:G\to \R$ is a differentiable function such that $\varphi(e)=0$ and $\d \varphi$ is a left-invariant 1-form.  Then, $\varphi$ is a Lie group morphism into $(\R,+)$. 
\end{lm}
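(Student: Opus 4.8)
The plan is to exploit the left-invariance of $\d\varphi$ in order to show that, for each fixed $g\in G$, the function $h\mapsto \varphi(gh)-\varphi(h)$ is constant, and then to identify that constant with $\varphi(g)$ using the normalization $\varphi(e)=0$.

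Concretely, first I would note that saying $\d\varphi$ is a left-invariant $1$-form means precisely $L_g^*(\d\varphi)=\d\varphi$ for every $g\in G$, where $L_g$ denotes left translation by $g$. Since pullback of forms commutes with the exterior derivative, $L_g^*(\d\varphi)=\d(\varphi\circ L_g)$, so combining the two identities gives $\d(\varphi\circ L_g-\varphi)=0$ on $G$.

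Next, because $G$ is connected (it is even simply connected, being the Lie group associated to $\mg$ throughout the paper), a function with vanishing differential is constant; hence for each $g\in G$ there is a constant $c(g)\in\R$ with $\varphi(gh)-\varphi(h)=c(g)$ for all $h\in G$. Evaluating at $h=e$ and using $\varphi(e)=0$ yields $c(g)=\varphi(g)$, so $\varphi(gh)=\varphi(g)+\varphi(h)$ for all $g,h\in G$, which is exactly the assertion that $\varphi\colon G\to(\R,+)$ is a Lie group morphism.

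I do not expect a genuine obstacle here. The only non-formal ingredient is the step from "zero differential" to "constant", which requires $G$ to be connected; this hypothesis is implicit in the standing conventions of the paper. The normalization $\varphi(e)=0$ is what pins down the additive constant in $c(g)=\varphi(g)$: without it one would only conclude that $\varphi-\varphi(e)$ is additive.
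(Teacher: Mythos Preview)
Your proof is correct and follows essentially the same approach as the paper: use left-invariance of $\d\varphi$ to conclude that $\varphi\circ L_g-\varphi$ has vanishing differential, hence is a constant $c(g)$ by connectedness of $G$, and then evaluate at $e$ to identify $c(g)=\varphi(g)$. Your presentation is slightly more explicit about the step $L_g^*(\d\varphi)=\d(\varphi\circ L_g)$, but otherwise the arguments coincide.
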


\begin{proof}
The left-invariance of $\d\varphi$ is equivalent to $L_a^*\d \varphi=\d \varphi$ for all $a\in G$. Since $G$ is connected, we obtain, for all $a\in G$, $L_a^*\varphi-\varphi=c_a$ where $c_a$ is a constant depending on $a$. This implies $\varphi(aa')-\varphi(a')=c_a$ for all $a,a'\in G$. Taking $a'=e$ we get $\varphi(a)=c_a$ and thus $\varphi(aa')=\varphi(a)+\varphi(a')$ as claimed. \end{proof}

Let $G$ be a Lie group with a left-invariant Riemannian metric $g$ and let $\varphi:G\to \R$ be a Lie group morphism. Then, for any $a\in G$,
\begin{equation}\label{eq:Laeg}
L_a^*(e^{2\varphi}g)=e^{2\varphi(a)}(e^{2\varphi}g).
\end{equation}
This means that $L_a$ is a similarity of $e^{2\varphi}g$ for all $a\in G$. Moreover, $L_a$ is an isometry of $e^{2\varphi}g$ if and only if $a\in\ker \varphi$.

\begin{pro}Suppose $\Gamma$ is a closed Lie subgroup of $G$ such that $\Gamma\bs G$ is compact and let $\varphi:G\to \R$ be a Lie group morphism. If the action of $\Gamma$ on $G$ by left-translations preserves the metric $e^{2\varphi}g$ then $\varphi\equiv 0$.
\end{pro}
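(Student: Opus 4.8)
The plan is to show that the hypothesis forces $\varphi$ to vanish on a cocompact subgroup, and then use that a Lie group morphism $\varphi:G\to(\R,+)$ vanishing on a cocompact subgroup must be identically zero. First I would recall from \eqref{eq:Laeg} that, since $\varphi$ is a group morphism, every left translation $L_a$ satisfies $L_a^*(e^{2\varphi}g)=e^{2\varphi(a)}(e^{2\varphi}g)$, so $L_a$ is an isometry of $e^{2\varphi}g$ if and only if $a\in\ker\varphi$. Hence the hypothesis that $\Gamma$ acts by isometries of $e^{2\varphi}g$ is exactly the statement that $\Gamma\subset\ker\varphi$.

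Next I would exploit compactness of $\Gamma\bs G$. Since $\varphi$ is continuous and constant (equal to $0$) on $\Gamma$, it descends to a continuous function on the compact quotient $\Gamma\bs G$ only if it is $\Gamma$-invariant; but $\varphi$ is a morphism and $\Gamma\subset\ker\varphi$, so indeed $\varphi(\gamma a)=\varphi(\gamma)+\varphi(a)=\varphi(a)$, i.e. $\varphi$ is left $\Gamma$-invariant and therefore induces a continuous function $\bar\varphi:\Gamma\bs G\to\R$. By compactness $\bar\varphi$ is bounded, so $\varphi$ is bounded on $G$. A bounded group morphism into $(\R,+)$ is identically zero: for any $a\in G$ and any $n\in\Z$ we have $\varphi(a^n)=n\,\varphi(a)$, and boundedness of $\{n\,\varphi(a):n\in\Z\}$ forces $\varphi(a)=0$. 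Since $a$ was arbitrary, $\varphi\equiv 0$.

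I do not expect any serious obstacle here; the only point that needs a line of care is the reduction to the abelian-target, commutator-free setting — namely checking that $\varphi$ genuinely descends to $\Gamma\bs G$, which uses that $\Gamma$ acts on the \emph{left} while $\varphi$ is a morphism, so that $\Gamma\subset\ker\varphi$ gives left-invariance of $\varphi$. One should also note that $\Gamma$ need not be discrete for this argument: only closedness and cocompactness of $\Gamma\bs G$ are used, together with the standing assumption (via Lemma \ref{lm:phihom} and the surrounding discussion) that $\varphi$ is a Lie group morphism, so $e^{2\varphi}g$ is genuinely a metric in the conformal class with $L_a$ a similarity of ratio $e^{\varphi(a)}$. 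Thus the whole proof is: hypothesis $\Rightarrow\Gamma\subset\ker\varphi\Rightarrow\varphi$ is left-$\Gamma$-invariant $\Rightarrow\varphi$ bounded on $G$ by compactness of $\Gamma\bs G\Rightarrow\varphi\equiv 0$.
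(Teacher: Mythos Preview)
Your proof is correct and follows essentially the same route as the paper's own proof: use \eqref{eq:Laeg} to get $\Gamma\subset\ker\varphi$, descend $\varphi$ to the compact quotient $\Gamma\bs G$, and conclude that a morphism to $(\R,+)$ with bounded (equivalently, compact) image is trivial. The only difference is cosmetic: the paper phrases the last step as ``the image of $\varphi$ is a compact subgroup of $\R$, hence trivial,'' while you spell out the $\varphi(a^n)=n\,\varphi(a)$ argument explicitly.
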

\begin{proof} By \eqref{eq:Laeg}, it clear that $L_\gamma$ is an isometry of $e^{2\varphi}g$ for all $\gamma$ if and only if $\varphi(\Gamma)=0$. If that is the case, then $\Gamma\subset \ker \varphi$ and thus there is a well defined map $\Gamma\bs G\to \R$ from a compact space to $\R$ whose image is $\Im \varphi$. Since $\varphi$ is a group morphism, its image is compact only when it is trivial.
\end{proof} 

We will now apply the considerations above to the particular case of Lie LCP structures.

Let $(g,\theta)$ be a Lie LCP structure on $M:=\Gamma \bs G$, and let $(\mg, g,\theta,\mmu)$ be the corresponding LCP Lie algebra with maximal factor, given by Proposition \ref{pro:11}. The Lie algebra $\mg$ of $G$ decomposes as $\mg=\mmu\rtimes_\alpha \mh$, where $\mh=\mmu^\bot$ and $\alpha(x)=\theta(x)\Id_\mmu+\beta(x)$, for some representation $\beta:\mh\to\so(\mmu)$ (see Section \ref{sec:LCPLieAlg}). Recall that $\theta|_\mmu=0$ so we can assume $\theta\in\mh^*$.

Consider $U$ and $H$ the simply connected Lie groups corresponding to $\mmu$ and $\mh$,  and let $g_\mmu$, $g_\mh$ be the left-invariant metrics induced by $g$ on $U$ and $H$, respectively. 

Since $H$ and $U$ are simply connected, the representation $\alpha:\mh\to \gl(\mmu)$ induces Lie group homomorphisms $\rho_0:H\to \GL(\mmu)$ and $\rho:H\to \Aut(U)$. These satisfy the following relations: $\d(\rho_0)_e=\alpha$ and $\d(\rho(x))_e=\rho_0(x)$ for every $x\in H$. Notice that $G$ is (isomorphic to) the semi-direct product $U\rtimes_\rho H$. 

Since $\theta$ is a closed 1-form in $H$, which is simply connected, one can write $\theta=\d\varphi$ for some function $\varphi\in C^\infty(H)$ with $\varphi(e)=0$. By Lemma \ref{lm:phihom}, $\varphi: H\to (\R,+)$ is a group homomorphism.

\begin{lm}\label{lm:rhoisconf} In the notation above, $\rho$ is a conformal representation with conformal function $\varphi$.
\end{lm}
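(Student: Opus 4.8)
The plan is to reduce, via Remark~\ref{rem:gu}, the assertion to the purely linear statement that the isomorphisms $\rho_0(x)\in\GL(\mmu)$ satisfy $\rho_0(x)^*g_\mmu=e^{2\varphi(x)}g_\mmu$ as inner products on $\mmu$, for every $x\in H$. First I would record the two infinitesimal inputs. Since $\rho_0\colon H\to\GL(\mmu)$ is a Lie group morphism with $\d(\rho_0)_e=\alpha$, one has $\rho_0(\exp(tx))=\exp\!\big(t\,\alpha(x)\big)$ for all $x\in\mh$ and $t\in\R$; and since $\varphi\colon H\to(\R,+)$ is a group morphism with $\d\varphi_e=\theta$, restriction to the one-parameter subgroup through $x\in\mh$ gives $\varphi(\exp(tx))=t\,\theta(x)$.

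The second ingredient is the infinitesimal conformality of $\alpha$: from $\alpha(x)=\theta(x)\Id_\mmu+\beta(x)$ with $\beta(x)\in\so(\mmu)$ one gets $g_\mmu(\alpha(x)v,w)+g_\mmu(v,\alpha(x)w)=2\theta(x)\,g_\mmu(v,w)$ for all $v,w\in\mmu$. Fixing $x\in\mh$ and $v,w\in\mmu$, I would then differentiate the real function $t\mapsto e^{-2t\theta(x)}\,g_\mmu\big(\rho_0(\exp tx)v,\rho_0(\exp tx)w\big)$. Using $\tfrac{d}{dt}\rho_0(\exp tx)=\alpha(x)\rho_0(\exp tx)$ together with the conformality relation applied at the vectors $\rho_0(\exp tx)v$ and $\rho_0(\exp tx)w$, the derivative is seen to vanish identically, so the function is constant, equal to its value $g_\mmu(v,w)$ at $t=0$. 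Hence $\rho_0(\exp(tx))^*g_\mmu=e^{2\varphi(\exp tx)}g_\mmu$ for every $x\in\mh$ and $t\in\R$.

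To upgrade this from one-parameter subgroups to all of $H$, I would check that the set $\{x\in H:\rho_0(x)^*g_\mmu=e^{2\varphi(x)}g_\mmu\}$ is a subgroup of $H$; this is a one-line computation using that $\rho_0$ and $\varphi$ are group morphisms (for a product $xy$, apply the identity for $x$ to the vectors $\rho_0(y)v,\rho_0(y)w$, then the identity for $y$). By the previous step this subgroup contains $\exp(\mh)$, hence a neighbourhood of the identity, hence — being a subgroup of the connected group $H$ — all of $H$. Finally, Remark~\ref{rem:gu} and the relation $\d(\rho(x))_e=\rho_0(x)$ turn the established identity $\rho_0(x)^*g_\mmu=e^{2\varphi(x)}g_\mmu$ into $\rho(x)^*g_\mmu=e^{2\varphi(x)}g_\mmu$, which is exactly the statement that $\rho$ is a conformal representation with conformal function $\varphi$.

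There is no genuine obstacle here: the computation showing the derivative vanishes is forced by the skew-symmetry of $\beta$, and the only point needing slight care is the passage from one-parameter subgroups to the whole group, where connectedness of $H$ (and the fact that $H$ is generated by the image of its exponential map) is used.
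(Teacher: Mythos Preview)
Your proposal is correct and follows essentially the same approach as the paper: reduce via Remark~\ref{rem:gu} to the linear statement about $\rho_0$, verify it on $\exp(\mh)$ using the decomposition $\alpha=\theta\,\Id_\mmu+\beta$ with $\beta$ skew, and then pass to all of $H$ by the subgroup argument. The only minor difference is that the paper handles the one-parameter case by directly factoring $e^{\alpha(X)}=e^{\theta(X)}e^{\beta(X)}$ (using that $\theta(X)\Id_\mmu$ commutes with $\beta(X)$) and observing $e^{\beta(X)}\in\SO(\mmu,g_\mmu)$, whereas you reach the same conclusion via an ODE argument; both are equally valid.
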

\begin{proof}Since $H$ is generated by products of elements in the image of the exponential map and $\rho$ and $\varphi$ in \eqref{eq:cr} are homomorphisms, it is enough to prove that $\rho$ is conformal for elements in $\exp(\mh)$.

Let $X\in \mh$. Then, 
\[
\rho_0(\exp X)=e^{\alpha(X)}=e^{\theta(X)}e^{\beta(X)},
\]where we used that $\d(\rho_0)_e=\alpha=\theta \Id_\mmu +\beta$. Since $\beta(X)\in\so(\mmu,g_\mmu)$, $e^{\beta(X)}\in \SO(\mmu,g_\mmu)$ and thus
\[
\rho_0(\exp X)^*g_\mmu=e^{2\theta(X)}g_\mmu.
\]

Using that $\varphi:H\to \R$ is the primitive of $\theta$ which is an homomorphism, we obtain $\varphi(\exp X)=\theta (X)$ so the previous equation reads
\[
\rho_0(\exp X)^*g_\mmu=e^{2\varphi(\exp X)}g_\mmu,
\]
thus proving $\d (\rho(x))_e^* g_\mmu=
\rho_0(x)^*g_\mmu=e^{2\varphi(x)}g_\mmu$ for all $x\in H$. 
The lemma follows from Remark \ref{rem:gu}.
\end{proof}

From Lemma \ref{lm:rhoisconf} and Proposition \ref{pro:sdisom} we obtain that $(G,e^{2\varphi}g)$ is isometric to the product $(U, g_\mmu)\times (H,e^{2\varphi}g_\mh)$. 
The next result shows that this is actually the de Rham decomposition of 
$(G,e^{2\varphi}g)$. 
 
\begin{pro}\label{pro:dRdec}In the notation above, the de Rham decomposition of the universal cover $G$ of the Lie LCP manifold $(M=\Gamma\bs G,g)$ endowed with the metric $e^{2\varphi}g$ reads
\begin{equation}\label{deco}
(G,e^{2\varphi}g)\simeq (U, g_\mmu)\times (H,e^{2\varphi}g_\mh).
\end{equation}
\end{pro}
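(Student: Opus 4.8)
The plan is to promote the isometry of Proposition~\ref{pro:sdisom} to an identification with Kourganoff's de Rham decomposition \eqref{ko}. By Lemma~\ref{lm:rhoisconf} the representation $\rho$ is conformal with conformal function $\varphi$, so Proposition~\ref{pro:sdisom} supplies an isometry $f\colon\bigl(U\times H,\ g_\mmu+e^{2\varphi}g_\mh\bigr)\to\bigl(G,\ e^{2\varphi}g\bigr)$, $f(u,x)=ux$. It therefore suffices to match the two factors on the source with the flat space $\R^q$ and the irreducible incomplete factor $(N,g_N)$ of \eqref{ko}. For the first one: since the LCP structure $(g,\theta,\mmu)$ has maximal flat factor, $\mmu$ is an abelian ideal of $\mg$ by Proposition~\ref{pro:max}; hence $U$ is the simply connected abelian Lie group of dimension $q=\dim\mmu$, its left-invariant metric $g_\mmu$ is a constant Euclidean metric, and so $(U,g_\mmu)$ is the flat space $\R^q$.

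Next I would identify the tangent distribution of the $U$-factor with the flat distribution $T\R^q\subset TG$ of \eqref{ko}. The first displayed computation in the proof of Proposition~\ref{pro:sdisom} shows that $\d f_{(u,x)}$ maps the tangent space of $U\times\{x\}$ at $(u,x)$ onto $\d(L_{ux})_e\bigl(\rho_0(x^{-1})\mmu\bigr)=\d(L_{ux})_e\,\mmu$, since $\rho_0(x^{-1})$ preserves $\mmu$. Thus $f$ carries the product foliation $\{U\times\{x\}\}_{x\in H}$ onto the foliation of $G$ integrating the left-invariant distribution $\mathcal D$ spanned by $\mmu\subset\mg$. By the proof of Proposition~\ref{pro:11} this $\mathcal D$ is $\nabla^\theta$-flat, hence contained in the flat distribution $T\R^q$ of \eqref{ko}, and because $\dim\mathcal D=\dim\mmu=q=\dim\R^q$ the two distributions coincide.

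It remains to deduce $(H,e^{2\varphi}g_\mh)\simeq(N,g_N)$. Since $f$ is an isometry taking the $U$-foliation onto the flat foliation of \eqref{ko}, it takes the orthogonal $H$-foliation of the Riemannian product $U\times H$ onto $(T\R^q)^\perp=TN$. Using that the flat factor $\R^q$ of \eqref{ko} is complete, each leaf $U\times\{x\}$ is mapped by $f$ onto a full leaf $\R^q\times\{b(x)\}$ (its image is an open, metrically complete, hence closed, connected submanifold of that leaf), so $f$ induces a diffeomorphism $b\colon H\to N$ of leaf spaces; composing $f|_{\{e\}\times H}$ with the projection $\R^q\times N\to N$ then yields an isometry from $(H,e^{2\varphi}g_\mh)$ onto $(N,g_N)$. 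Hence \eqref{deco} coincides with \eqref{ko}, which is the assertion. The only delicate point is the incompleteness of $N$, and thus of $(H,e^{2\varphi}g_\mh)$: it obstructs the symmetric argument ``both factors are complete, hence each maps onto a full leaf'', so I match the complete flat factors first and then transfer the identification to the incomplete factor through the induced map on leaf spaces. Alternatively, one may simply invoke the uniqueness of the de Rham decomposition valid in this incomplete setting (cf. Theorem~\ref{teo:ko}): \eqref{deco} together with the de Rham splitting of $(H,e^{2\varphi}g_\mh)$ realizes $(G,e^{2\varphi}g)$ as a product of the flat $\R^q$ with irreducible factors, which must then coincide with the one-irreducible-factor decomposition \eqref{ko}.
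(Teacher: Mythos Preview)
Your proof is correct and rests on the same skeleton as the paper's: invoke the product isometry of Proposition~\ref{pro:sdisom} (via Lemma~\ref{lm:rhoisconf}) and then match the two factors with those of Kourganoff's decomposition~\eqref{ko} using the maximality of $\mmu$ (so that $\dim\mmu=q$, cf.\ the proof of Proposition~\ref{pro:11}).

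The difference is in how the matching is carried out. The paper argues tersely by dimension: Theorem~\ref{teo:ko} says the de Rham decomposition has exactly two factors, one flat of dimension $q$; since $\dim U=q$, one has $U\simeq\R^q$ and hence $(H,e^{2\varphi}g_\mh)$ must be the remaining irreducible factor. Your main argument instead identifies the left-invariant distribution $\mathcal D$ spanned by $\mmu$ with the flat distribution $T\R^q$ (containment plus equal dimension), and then runs an explicit leaf-space argument, using completeness of the flat factor to show each $U$-leaf maps onto a full $\R^q$-leaf and deducing that the induced map $b\colon H\to N$ is an isometry. This is more hands-on and sidesteps any appeal to uniqueness of de Rham in the incomplete setting; the paper's shortcut implicitly leans on that uniqueness (or, equivalently, on the observation that a further non-trivial flat factor of $H$ would enlarge $\mmu$). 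Your closing ``alternative'' is essentially the paper's argument.
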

\begin{proof}
 By Theorem \ref{teo:ko}, the de Rham decomposition of $(G,e^{2\varphi}g)$ has only two factors, one of them being flat Euclidean of dimension $q\geq1$.

From Proposition \ref{pro:11} and the fact that $(\mg,g,\theta,\mmu)$ is maximal, we get $q=\dim U$ and thus $\R^q\simeq (U,g_\mmu)$. Consequently, $(H,e^{2\varphi}g_\mh)$ is the irreducible factor given by Theorem \ref{teo:ko}.
\end{proof}

\section{The characteristic group of Lie LCP structures}

We keep the notation from the previous section: $(g,\theta)$ is a  Lie LCP structure on $M:=\Gamma \bs G$, $(\mg, g,\theta,\mmu)$ is the corresponding LCP Lie algebra with maximal factor, and $\mg=\mmu\rtimes_\alpha \mh$ is a semi-direct product defined by a conformal representation. The simply connected Lie groups corresponding to $\mmu$ and $\mh$ are denoted by $U$ and $H$, and  $g_\mmu$, $g_\mh$ denote the left-invariant metrics induced by $g$ on $U$ and $H$, respectively. We denote by $\varphi$ the homomorphism from $G$ to $(\R,+)$ that satisfies $\d\varphi=\theta$ given by Lemma \ref{lm:phihom}.

We identify $U$ with the leaf of $\mmu$ through the identity of $G$ (which is a normal abelian subgroup of $G$) and write as before $G=U\rtimes_\rho H$. 

\begin{remark}
[Projections in semi-direct products: notation]
Assume that $A$ is a group and $X,B\subset A$ are subgroups such that $X$ is normal in $A$, $A=XB$  and $X\cap B={1_A}$. Then $A$ can be written as the semi-direct product $A=X\rtimes B$, and there is a well-defined group morphism $\pr^A_B:A\to B$ such that $\pr^A_B(xb)=b$ for every $x\in X$ and $b\in B$. If $B$ itself is a semi-direct product $B=Y\rtimes C$, then one can write
$$A=X\rtimes (Y\rtimes C)=(X\rtimes Y)\rtimes C,$$
and clearly 
\begin{equation}\label{abc}
    \pr^A_C=\pr^B_C\circ\pr^A_B.
\end{equation}    
\end{remark}

Since $M=\Gamma\bs G$, the fundamental group $\pi_1(M)$ can be identified to the subgroup $\Gamma$ of $G$, acting on $G$ by left translations. However, we will make the distinction between an element $\gamma\in\Gamma$ and its action $L_\gamma$ on $G=\tilde M$.

We denote by $p_U:G\to U$ the projection over the first factor with respect to the identification of $U\times H$ with $G$ given by $(u,x)\mapsto ux$. Note that, unlike $\pr_H^G$,  $p_U$ is not a group morphism in general.  
Given $\gamma\in \Gamma$, write $\gamma=(\gamma_1, \gamma_2)$, with $\gamma_1:=p_U(\gamma)$ and $\gamma_2:=\pr^G_H(\gamma)$, so that for every  $a=(u,x)\in G$ with $u\in U$, $x\in H$, we have, by \eqref{eq:psd},
\begin{equation}
\label{eq:gact}
\gamma a=(\gamma_1\rho(\gamma_2)u, \gamma_2x).
\end{equation} 

We will now specialize the considerations about the characteristic group introduced in \S\ref{LCPgen} to the present setting of Lie LCP structures. As before, $\Gamma$ acts on $G=U\rtimes_\rho H$ by homotheties of $e^{2\varphi}g$. Therefore, it preserves the de Rham decomposition $(G,e^{2\varphi}g)=(U,g_\mmu)\times (H, e^{2\varphi}g_\mh)$ given by Proposition \ref{pro:dRdec} and thus $\Gamma\subset {\rm Sim} (U,g_\mmu)\times {\rm Sim}(H, e^{2\varphi}g_\mh)$. By denoting $f_1\in {\rm Sim} (U,g_\mmu)$ and $f_2\in {\rm Sim} (H,e^{2\varphi}g_\mh)$ the components of $L_\gamma\in \Gamma$, \eqref{eq:gact} yields $f_1(u)=\gamma_1\rho(\gamma_2)u$ and $f_2(h)=\gamma_2 h$, for every $u\in U$ and $h\in H$. In particular this shows that each element $\gamma\in \Gamma$ acts on $H$ as the left-translation by $\gamma_2=\pr^G_H(\gamma)$.

In the Lie LCP setting, the characteristic group reads
\begin{equation}\label{eq:Pdef}
\rmP=\{f_2\in {\rm Sim}(H, e^{2\varphi}g_\mh)\colon \exists f_1\in {\rm Sim}(U,g_\mmu) \mbox{ such that } (f_1,f_2)\in \Gamma\}.
\end{equation}
In other words, $\rmP=\pr^G_H(\Gamma)$ and by the above observation, $\rmP$ acts on $H$ by left-translations. By Lemma \ref{lm:2.5}, the projection $\pr^G_H$ defines an isomorphism between $\Gamma$ and $\rmP$.

Since the inclusion $H\subset {\rm Sim}(H, e^{2\varphi}g_\mh)$ given by left-translations is closed and continuous, the closure of $\rmP$ in ${\rm Sim}(H, e^{2\varphi}g_\mh)$ coincides with its closure in $H$. 
We denote as before by $\bar \rmP$ this closure and by $\bar \rmP_0$ its connected component of the identity. In particular, by Proposition \ref{pro:Kp0g0gen}, $\bar\rmP_0\subset H$ acts on $H$ by isometries of $e^{2\varphi}g_\mh$. From this we obtain: 
\begin{pro} \label{pro:vanish} The Lee form $\theta$ vanishes on the Lie algebra $\mmp$ of $\bar\rmP_0$. \end{pro}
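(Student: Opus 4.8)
The statement is an immediate consequence of the discussion around \eqref{eq:Laeg} applied to the irreducible factor $H$, combined with the fact (Proposition \ref{pro:Kp0g0gen}, specialized as noted just before the statement) that $\bar\rmP_0\subset H$ acts on $H$ by left-translations which are isometries of $e^{2\varphi}g_\mh$. The plan is simply to translate ``acts by isometries'' into ``lies in $\ker\varphi$'', and then pass to Lie algebras.

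\textbf{Key steps.} First, recall that $\varphi|_H$ is a Lie group morphism $H\to(\R,+)$ with $\d(\varphi|_H)_e=\theta$ (this is how $\varphi$ was produced in \S3 via Lemma \ref{lm:phihom}, since $\theta\in\mh^*$). Applying \eqref{eq:Laeg} with $G$ replaced by $H$, the left-invariant metric $g$ replaced by $g_\mh$, and $\varphi$ replaced by $\varphi|_H$, we get that for every $a\in H$ the left-translation $L_a$ satisfies $L_a^*(e^{2\varphi}g_\mh)=e^{2\varphi(a)}(e^{2\varphi}g_\mh)$, hence $L_a$ is an isometry of $e^{2\varphi}g_\mh$ if and only if $a\in\ker(\varphi|_H)$. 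Since $\bar\rmP_0$ acts on $H$ by left-translations which are isometries of $e^{2\varphi}g_\mh$, we conclude $\bar\rmP_0\subset\ker(\varphi|_H)$.

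Second, $\ker(\varphi|_H)$ is a closed subgroup of $H$ whose Lie algebra is $\ker\theta\subset\mh$, because $\varphi|_H$ is a morphism to $(\R,+)$ with differential $\theta$ at $e$. As $\bar\rmP_0$ is a connected Lie subgroup of $H$ contained in $\ker(\varphi|_H)$, its Lie algebra $\mmp$ is contained in $\ker\theta$, i.e.\ $\theta|_\mmp=0$.

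\textbf{Main obstacle.} There is essentially no obstacle: the only point requiring a little care is the bookkeeping between the two copies of the ``$\varphi$'' appearing in the paper (the one on $G$ and its restriction to $H$), and checking that the factor on which $\bar\rmP_0$ sits is exactly $H$ with the metric $e^{2\varphi}g_\mh$, which is precisely the content of Proposition \ref{pro:dRdec} together with the identification of $\bar\rmP_0$ as a subgroup of $\Isom(H,e^{2\varphi}g_\mh)$ acting by left-translations made just above the statement.
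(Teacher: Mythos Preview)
Your proof is correct and follows essentially the same approach as the paper: use the isometry property of $\bar\rmP_0$ together with \eqref{eq:Laeg} (applied to $(H,g_\mh,\varphi|_H)$) to conclude $\varphi$ vanishes on $\bar\rmP_0$, then differentiate to obtain $\theta|_\mmp=0$. The paper's version is slightly more terse, computing $\theta(X)=\frac{d}{dt}\big|_{t=0}\varphi(\exp tX)=0$ directly rather than phrasing it via the Lie algebra of $\ker(\varphi|_H)$, but the argument is the same.
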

\begin{proof} Since $\bar\rmP_0\subset H$ acts on $H$ by left-translations which are isometries of $e^{2\varphi}g_\mh$, Equation \eqref{eq:Laeg} applied to $(H,e^{2\varphi}g_\mh)$ shows that $\varphi(x)=0$ for any $x\in \bar\rmP_0$. Then for any $X\in \mmp$, $\theta(X)=\frac{d}{dt}|_{t=0}\varphi(\exp tX)=0$ as claimed. 
\end{proof}

A priori, we cannot realize the characteristic group $\R^q\times \bar\rmP_0$ (see Definition \ref{eq:Pdefgen}) of a Lie LCP manifold $M=\Gamma\backslash G$ as a subgroup of $G=U\rtimes_\rho H$, since the elements of $U\simeq \R^q$ and $\bar\rmP_0$ may not commute in $G$. However, the next result shows that this is always the case.

\begin{pro}\label{pro:xuux}
For all $u\in U$, $x\in \bar\rmP_0$, $ux=xu$ in $G=U\rtimes_\rho H$.
\end{pro}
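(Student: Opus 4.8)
The plan is to reduce the statement to the claim that $\rho(x)=\Id_U$ for every $x\in\bar\rmP_0$. Indeed, from the semi-direct product law \eqref{eq:psd} one computes $ux=(u,x)$ and $xu=(\rho(x)u,x)$ in $G=U\rtimes_\rho H$, so $ux=xu$ holds for all $u\in U$ if and only if $\rho(x)$ is the identity automorphism of $U$; since $U$ is simply connected this is in turn equivalent to $\rho_0(x)=\Id_\mmu$. Thus it suffices to prove $\bar\rmP_0\subset\ker\rho$. As $\rho\colon H\to\Aut(U)$ is a continuous homomorphism, $\ker\rho$ is closed in $H$, and by Lemma \ref{5.2} the subgroup $\rmP\cap\bar\rmP_0$ is dense in $\bar\rmP_0$; hence it is enough to show $\rmP\cap\bar\rmP_0\subset\ker\rho$.

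So I would take $p\in\rmP\cap\bar\rmP_0$. Using the isomorphism $\pr^G_H\colon\Gamma\to\rmP$ (Lemma \ref{lm:2.5}), write $p=\pr^G_H(\gamma)$ for a unique $\gamma\in\Gamma$ and decompose $\gamma=(\gamma_1,p)$ with $\gamma_1=p_U(\gamma)\in U$. Viewed inside ${\rm Sim}(U,g_\mmu)\times{\rm Sim}(H,e^{2\varphi}g_\mh)$, the transformation $L_\gamma$ has second component the left translation by $p$, which lies in $\bar\rmP_0$; therefore $\gamma\in\Gamma_0=\Gamma\cap({\rm Sim}(\R^q)\times\bar\rmP_0)$. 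By \cite[Lemma 4.18]{Kourganoff} (recalled just before Proposition \ref{pro:Kp0g0gen}), $\gamma$ then acts on $U\simeq\R^q$ by a \emph{translation}. On the other hand, by \eqref{eq:gact} the action of $\gamma$ on $U$ is $u\mapsto\gamma_1\rho(p)u$, an affine map whose linear part is the linear automorphism $\rho_0(p)$ of $\mmu$. An affine transformation of $\R^q$ is a translation exactly when its linear part is the identity, so $\rho_0(p)=\Id_\mmu$, i.e. $p\in\ker\rho$.

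Putting the two steps together: $\rmP\cap\bar\rmP_0\subset\ker\rho$, hence by density $\bar\rmP_0\subset\ker\rho$, hence $\rho(x)u=u$ for every $u\in U$ and every $x\in\bar\rmP_0$, which is exactly $ux=xu$ in $G$.

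The only mildly delicate points are bookkeeping: first, checking that the ``$\rho$-twist'' $\rho(p)$ appearing in the $U$-component of the $\Gamma$-action is precisely the linear part of the corresponding affine transformation of $\R^q$ (so that being a translation forces it to be trivial); and second, confirming that $p\in\bar\rmP_0$ really does place $\gamma$ in $\Gamma_0$, which is what licenses the use of Kourganoff's Lemma 4.18. One should also note that ``translation of $\R^q$'' in that lemma coincides with left translation in the abelian group $U$, which is legitimate because $U\cong(\R^q,+)$ and $g_\mmu$ is the (left-invariant, hence flat) metric on it. Everything else in the argument is formal.
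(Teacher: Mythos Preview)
Your proof is correct and follows essentially the same route as the paper's: reduce to $\rho(x)=\Id_U$ on $\bar\rmP_0$, use density of $\rmP\cap\bar\rmP_0$ (Lemma \ref{5.2}), show that any $\gamma$ with $\pr^G_H(\gamma)\in\bar\rmP_0$ lies in $\Gamma_0$, and then invoke \cite[Lemma 4.18]{Kourganoff} to force the linear part of its $U$-action to be trivial. The only cosmetic difference is that the paper evaluates $\gamma_1\rho(\gamma_2)u=wu$ at $u=e$ to get $\rho(\gamma_2)=\Id_U$, whereas you phrase this as ``the linear part of the affine map is the identity''; these are the same observation.
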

\begin{proof}This is equivalent to showing  $\rho(x)=\Id_U$ for all $x\in \bar\rmP_0$.

Let $\gamma_2\in \bar\rmP_0\cap \rmP$. By the definition of $ \rmP$, there exists $\gamma_1\in {\rm Sim}(U,g_\mmu)$ with $\gamma=(\gamma_1,\gamma_2)\in \Gamma$. Thus $\gamma\in \Gamma \cap ({\rm Sim}(U,g_\mmu)\times \bar\rmP_0)=\Gamma_0$ and, by \cite[Lemma 4.18]{Kourganoff}, the restriction of $\gamma$ to $U$ is a translation. Hence, there exists $w\in U$ such that 
\[
\gamma_1\rho(\gamma_2)u=wu, \qquad \forall\; u\in U.
\]

Taking $u$ as the identity in $U$ in the last equation yields $\gamma_1=w$ and thus $\rho(\gamma_2)=\Id_U$ for all $\gamma_2\in \bar\rmP_0\cap \rmP$.
Since $\bar\rmP_0\cap \rmP$ is dense in $\bar\rmP_0$ (Lemma \ref{5.2}), by continuity we get $\rho(x)=\Id_U$ for all $x\in \bar\rmP_0$.
\end{proof}

\begin{cor}\label{cor4.3}The characteristic group $\R^q\times \bar\rmP_0$ of a Lie LCP manifold $M=\Gamma\bs G$ is an abelian subgroup of $G=U\rtimes_\rho H$.
\end{cor}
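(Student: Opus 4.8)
The plan is to realise the characteristic group concretely inside $G$ as the subgroup generated by $U$ and $\bar\rmP_0$, and then deduce that this subgroup is abelian from the commutation relation established in Proposition \ref{pro:xuux}.

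First I would invoke Proposition \ref{pro:xuux}, which says that $\rho(x)=\Id_U$ for every $x\in\bar\rmP_0$, equivalently $ux=xu$ in $G$ for all $u\in U$ and all $x\in\bar\rmP_0$. From this I would check that the set $U\cdot\bar\rmP_0=\{ux\colon u\in U,\ x\in\bar\rmP_0\}$ is a subgroup of $G$: it is closed under multiplication since $(u_1x_1)(u_2x_2)=u_1(x_1u_2)x_2=(u_1u_2)(x_1x_2)\in U\cdot\bar\rmP_0$, and closed under inverses since $(ux)^{-1}=x^{-1}u^{-1}=u^{-1}x^{-1}$. Moreover $U\cap\bar\rmP_0\subset U\cap H=\{e\}$, so this subgroup is an internal direct product $U\cdot\bar\rmP_0\cong U\times\bar\rmP_0$.

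Next I would note that $U$ is abelian, being the simply connected Lie group of the abelian ideal $\mmu\cong\R^q$, and that $\bar\rmP_0$ is abelian by Proposition \ref{pro:Kp0g0gen}; since the two factors commute with one another, the subgroup $U\cdot\bar\rmP_0$ is abelian. Finally I would identify this subgroup with the characteristic group: under the isometry of Proposition \ref{pro:dRdec} the Euclidean factor $\R^q$ of the characteristic group is the translation group of $(U,g_\mmu)$, which for the abelian group $U$ is exactly $U$ acting on itself by left translations, while $\bar\rmP_0$ acts on $H$ by left translations; hence $\R^q\times\bar\rmP_0$ is precisely $U\cdot\bar\rmP_0\subset G$.

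No genuine obstacle remains at this stage: the substantive input is Proposition \ref{pro:xuux} (itself resting on \cite[Lemma 4.18]{Kourganoff} together with the density statement of Lemma \ref{5.2}), and the rest is merely the bookkeeping of turning the abstract product $\R^q\times\bar\rmP_0$ into the commuting-subgroups picture inside $G=U\rtimes_\rho H$.
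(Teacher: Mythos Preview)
Your proposal is correct and follows exactly the route the paper intends: the corollary is stated without proof immediately after Proposition~\ref{pro:xuux}, and your argument simply makes explicit the bookkeeping (that $U\cdot\bar\rmP_0$ is a subgroup, that $U\cap\bar\rmP_0=\{e\}$, and that both factors are abelian and mutually commute) which the paper leaves implicit.
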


Recall that $\Gamma_0=\Gamma\cap (U\times \bar\rmP_0)$ by Proposition \ref{pro:Kp0g0gen}.

\begin{pro}\label{4.8}
$\Gamma_0$ is a normal subgroup of $\Gamma$.
\end{pro}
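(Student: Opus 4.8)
The plan is to prove that conjugation by any element of $\Gamma$ maps $U\times\bar\rmP_0$ into itself. Since $U\times\bar\rmP_0$ is an abelian subgroup of $G=U\rtimes_\rho H$ by Corollary \ref{cor4.3}, this yields at once both that $\Gamma_0=\Gamma\cap(U\times\bar\rmP_0)$ is a subgroup of $\Gamma$ and that it is normal. The reformulation I would exploit is that $U=\ker\pr^G_H$ while $U\cap\bar\rmP_0\subset U\cap H=\{e\}$, so that
\[
U\times\bar\rmP_0=(\pr^G_H)^{-1}(\bar\rmP_0);
\]
equivalently, an element $a\in G$ lies in $U\times\bar\rmP_0$ precisely when $\pr^G_H(a)\in\bar\rmP_0$. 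Hence, for $\gamma\in\Gamma$ and $\delta\in\Gamma_0$, the element $\gamma\delta\gamma^{-1}$ already belongs to $\Gamma$, and it remains only to check that $\pr^G_H(\gamma\delta\gamma^{-1})\in\bar\rmP_0$.

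Since $\pr^G_H\colon G\to H$ is a group morphism, $\pr^G_H(\gamma\delta\gamma^{-1})=\gamma_2\,\pr^G_H(\delta)\,\gamma_2^{-1}$ with $\gamma_2:=\pr^G_H(\gamma)\in\rmP$ (recall $\rmP=\pr^G_H(\Gamma)$), and $\pr^G_H(\delta)\in\bar\rmP_0$ because $\delta\in\Gamma_0\subset U\times\bar\rmP_0=(\pr^G_H)^{-1}(\bar\rmP_0)$. Thus everything reduces to showing that $\rmP$ normalizes $\bar\rmP_0$ inside $H$. For that I would argue as follows: for $\gamma_2\in\rmP$, conjugation $c_{\gamma_2}\colon y\mapsto\gamma_2 y\gamma_2^{-1}$ is a continuous automorphism of $H$; as $\rmP$ is a subgroup of $H$ containing $\gamma_2$, one has $c_{\gamma_2}(\rmP)=\rmP$, hence $c_{\gamma_2}(\bar\rmP)=\overline{c_{\gamma_2}(\rmP)}=\bar\rmP$ since $c_{\gamma_2}$ is a homeomorphism; and finally $c_{\gamma_2}$ restricts to a continuous automorphism of the Lie group $\bar\rmP$ (which is a Lie group, being a closed subgroup of $H$), so it preserves the identity component, i.e.\ $\gamma_2\bar\rmP_0\gamma_2^{-1}=\bar\rmP_0$. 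Putting the pieces together, $\gamma\delta\gamma^{-1}\in\Gamma\cap(\pr^G_H)^{-1}(\bar\rmP_0)=\Gamma_0$, which is the claim.

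I do not anticipate a serious obstacle: the only topological input is the invariance of $\bar\rmP_0$ under conjugation by $\rmP$, which is immediate from continuity of conjugation together with the standard fact that a continuous automorphism of a Lie group fixes the identity component. The single point to keep in mind is to invoke Corollary \ref{cor4.3} (equivalently Proposition \ref{pro:xuux}) so that $U\times\bar\rmP_0$ is an honest subgroup of $G$ — without it the assertion ``$\Gamma_0$ is normal in $\Gamma$'' would not even be meaningful, since $\Gamma_0$ might fail to be a group; alternatively one can bypass this by observing directly that $(\pr^G_H)^{-1}(\bar\rmP_0)$ is a subgroup of $G$ simply as the preimage of a subgroup under a group morphism.
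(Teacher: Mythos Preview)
Your proof is correct and follows essentially the same approach as the paper: both reduce the question to showing that $\bar\rmP_0$ is normalized by $\rmP$, which is immediate since the identity component of a topological group is characteristic (the paper just says ``obvious since $\bar\rmP_0$ is normal in $\bar\rmP$''). Your preimage formulation $U\times\bar\rmP_0=(\pr^G_H)^{-1}(\bar\rmP_0)$ is a clean way to package the reduction and bypasses the explicit semi-direct product computation the paper alludes to via \eqref{eq:psd}, but this is a cosmetic rather than substantive difference.
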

\begin{proof} Since $\Gamma_0$ is a subgroup of $U\times \bar\rmP_0$ and $U$ is normal in $G$, by \eqref{eq:psd} it suffices to show that $ \bar\rmP_0$ is preserved by conjugation with elements in $P$, which is obvious since $ \bar\rmP_0$ is normal in $ \bar\rmP$.
\end{proof}

\begin{lm}\label{lm:pip}
$\rmP\cap \bar\rmP_0=\pr^G_H(\Gamma_0)$.
\end{lm}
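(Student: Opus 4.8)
The plan is to prove the two inclusions separately, using the isomorphism $\pr^G_H\colon\Gamma\to\rmP$ from Lemma \ref{lm:2.5} and the characterization of $\Gamma_0$ given in Proposition \ref{pro:Kp0g0gen}.

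\textbf{First inclusion: $\pr^G_H(\Gamma_0)\subset \rmP\cap\bar\rmP_0$.} Let $\gamma\in\Gamma_0=\Gamma\cap(U\times\bar\rmP_0)$ and set $\gamma_2:=\pr^G_H(\gamma)$. On the one hand, $\gamma\in\Gamma$ so $\gamma_2\in\pr^G_H(\Gamma)=\rmP$. On the other hand, since $\gamma\in U\times\bar\rmP_0$, we may write $\gamma=(u,x)$ with $u\in U$ and $x\in\bar\rmP_0$; by the very definition of $\pr^G_H$ in the semi-direct product $G=U\rtimes_\rho H$ we get $\gamma_2=\pr^G_H(ux)=x\in\bar\rmP_0$. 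Hence $\gamma_2\in\rmP\cap\bar\rmP_0$. This direction is essentially immediate from the definitions.

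\textbf{Second inclusion: $\rmP\cap\bar\rmP_0\subset \pr^G_H(\Gamma_0)$.} Let $p\in\rmP\cap\bar\rmP_0$. Since $p\in\rmP=\pr^G_H(\Gamma)$, there is a (unique, by Lemma \ref{lm:2.5}) element $\gamma\in\Gamma$ with $\pr^G_H(\gamma)=p$. Write $\gamma=(\gamma_1,\gamma_2)$ as in \eqref{eq:gact}, so $\gamma_2=p\in\bar\rmP_0$. I must show $\gamma\in\Gamma_0$, i.e. that $\gamma$ lies in $U\times\bar\rmP_0$, equivalently (recalling $\Gamma_0=\Gamma\cap({\rm Sim}(U,g_\mmu)\times\bar\rmP_0)$ and that the action of $\gamma$ on $U$ is $u\mapsto\gamma_1\rho(\gamma_2)u$) that the similarity factor $f_2=\gamma_2$ of $L_\gamma$ on the $H$-factor lies in $\bar\rmP_0$ — which it does, since $\gamma_2=p\in\bar\rmP_0$. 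Thus $\gamma\in\Gamma\cap({\rm Sim}(U,g_\mmu)\times\bar\rmP_0)=\Gamma_0$, and $p=\pr^G_H(\gamma)\in\pr^G_H(\Gamma_0)$.

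\textbf{Where the subtlety lies.} The only point requiring care is matching the two descriptions of $\Gamma_0$: the abstract one $\Gamma\cap({\rm Sim}(\R^q)\times\bar\rmP_0)$ from \S\ref{LCPgen} and the concrete one $\Gamma\cap(U\times\bar\rmP_0)$ inside $G=U\rtimes_\rho H$, which by \cite[Lemma 4.18]{Kourganoff} (quoted in Proposition \ref{pro:Kp0g0gen}) are the same because the first factor of any element of $\Gamma_0$ acts on $U$ by a translation. Once this identification is in hand — and it is exactly the content recalled right before the lemma — both inclusions reduce to the observation that for $(u,x)\in U\rtimes_\rho H$ one has $\pr^G_H(ux)=x$, together with the bijectivity of $\pr^G_H$ on $\Gamma$. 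I expect no genuine obstacle here; it is a bookkeeping lemma consolidating the setup for Section 5.
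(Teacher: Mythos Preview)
Your proof is correct and follows essentially the same approach as the paper: both arguments prove the two inclusions directly from the identification $\Gamma_0=\Gamma\cap(U\times\bar\rmP_0)$ and the fact that for $\gamma=(\gamma_1,\gamma_2)\in G=U\rtimes_\rho H$ one has $\pr^G_H(\gamma)=\gamma_2$. The paper's version is a bit more streamlined (it does not invoke the uniqueness from Lemma~\ref{lm:2.5}, nor dwell on the similarity interpretation), but the logic is the same.
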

\begin{proof}Let $\gamma_2\in  \rmP\cap \bar\rmP_0$, then there is $\gamma_1\in U$ such that $\gamma:=(\gamma_1,\gamma_2)\in\Gamma$. Since $\gamma_2\in \bar\rmP_0$ we get $\gamma\in U\times \bar\rmP_0$ as well and thus $\gamma\in\Gamma_0$, from which we obtain $\gamma_2\in \pr^G_H(\Gamma_0)$.

Conversely, let $\gamma_2\in\pr^G_H(\Gamma_0)$. Take $\gamma_1\in U$ such that $\gamma:=(\gamma_1,\gamma_2)\in \Gamma_0=\Gamma\cap (U\times \bar\rmP_0)$. Then $\gamma_2\in \pr^G_H(\gamma)=\rmP$ and $\gamma_2\in \pr^G_H(U\times \bar\rmP_0)=\bar\rmP_0$.
\end{proof}

\begin{pro} \label{pro:finsg}Assume that $\Gamma$ admits a finite index subgroup $\Lambda$. Denote by $Q:=\pr^G_H(\Lambda)$.
Then $\Lambda$ is a lattice of $G$ and $(g,\theta)$ is a Lie LCP structure on $\Lambda\bs G$ whose reduced characteristic group $\bar{\rm Q}_0$ verifies $\bar{\rm Q}_0=\bar\rmP_0$.
\end{pro}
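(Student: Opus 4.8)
The plan is to verify the three assertions of Proposition~\ref{pro:finsg} in turn, using the identification $\Gamma\cong\rmP$ provided by Lemma~\ref{lm:2.5} and the density statement of Lemma~\ref{5.2}. First I would show that $\Lambda$ is a lattice in $G$: since $\Gamma$ is discrete and co-compact in $G$, any finite index subgroup $\Lambda\subset\Gamma$ is automatically discrete, and co-compactness follows because $\Lambda\bs G$ is a finite cover of the compact manifold $\Gamma\bs G$, hence compact. Next, since $\mg$ is unchanged and $\Lambda$ still acts on $G$ by left translations preserving $g$ and $\theta$ (as these are restrictions of the data for $\Gamma$), and since by Theorem~\ref{teo:ko} the metric $e^{2\varphi}g$ on $G$ has the same de Rham decomposition $(U,g_\mmu)\times(H,e^{2\varphi}g_\mh)$ independently of the lattice, the pair $(g,\theta)$ defines a Lie LCP structure on $\Lambda\bs G$ in the sense of Definition~\ref{lielcp}.

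The main point is the equality $\bar{\rm Q}_0=\bar\rmP_0$. Since $Q=\pr^G_H(\Lambda)\subset\pr^G_H(\Gamma)=\rmP$, we immediately get $\bar{\rm Q}\subset\bar\rmP$ and hence $\bar{\rm Q}_0\subset\bar\rmP_0$ because $\bar\rmP_0$ is a closed connected subgroup containing the connected group $\bar{\rm Q}_0$. For the reverse inclusion, the key fact is that $\Lambda$ has finite index in $\Gamma$, so via the isomorphism $\pr^G_H\colon\Gamma\to\rmP$ (Lemma~\ref{lm:2.5}), $Q$ has finite index in $\rmP$, say $[\rmP:Q]=m$. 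I would then argue that $\bar{\rm Q}$ has finite index in $\bar\rmP$: indeed $\rmP$ is contained in the union of $m$ cosets of $Q$, each of which is closed, so $\bar\rmP\subset\bigcup_{i=1}^m \overline{p_iQ}=\bigcup_{i=1}^m p_i\bar{\rm Q}$, whence $[\bar\rmP:\bar{\rm Q}]\le m<\infty$. A closed subgroup of finite index in a Lie group is open, so $\bar{\rm Q}$ is open in $\bar\rmP$, and therefore $\bar{\rm Q}$ contains the identity component $\bar\rmP_0$ of $\bar\rmP$. Since $\bar{\rm Q}_0$ is the identity component of $\bar{\rm Q}$ and $\bar\rmP_0$ is connected, this gives $\bar\rmP_0\subset\bar{\rm Q}_0$, and combined with the first inclusion we conclude $\bar{\rm Q}_0=\bar\rmP_0$.

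I expect the main obstacle to be the bookkeeping in the step $[\bar\rmP:\bar{\rm Q}]<\infty$: one must be careful that finite index passes to closures, which relies on each coset $p_iQ$ being closed in $H$ (equivalently, that $Q$, being in bijection with the lattice $\Lambda$, is discrete, or at least that its cosets are closed) so that the closure of a finite union of cosets is the finite union of their closures. Here one uses that $\rmP$ itself need not be discrete, but it suffices that the inclusion $H\hookrightarrow{\rm Sim}(H,e^{2\varphi}g_\mh)$ is closed and that $\overline{p_iQ}=p_i\bar{\rm Q}$ by continuity of left translation. Once this is in place, the openness of finite-index closed subgroups in a Lie group (a standard fact, already invoked in the proof of Lemma~\ref{5.2}) closes the argument. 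One should also note in passing that the reduced characteristic group so obtained sits inside the same $H$, so the statement is literally an equality of subgroups of $H$, not merely an abstract isomorphism.
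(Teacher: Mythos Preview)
Your proof is correct and follows essentially the same approach as the paper's. Both arguments reduce to showing that $\bar\rmP=\bigcup_{i=1}^m p_i\bar{\rm Q}$ for coset representatives $p_i\in\rmP$, and then conclude that $\bar{\rm Q}$, being closed of finite index in the Lie group $\bar\rmP$, is open and hence contains $\bar\rmP_0$; the paper does the first step via an explicit subsequence argument, while you phrase it through $\overline{p_iQ}=p_i\bar{\rm Q}$ and closure of a finite union, which is the same computation. Your worry about ``cosets being closed'' is misplaced, as you yourself note: all that is needed is that $L_{p_i}$ is a homeomorphism of $H$, so $\overline{p_iQ}=p_i\bar{\rm Q}$ holds without any discreteness assumption on $Q$.
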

\begin{proof}
The fact that $\Lambda$ is a subgroup of $\Gamma$ implies that $\Lambda$ is discrete and the map from $\Lambda\bs G$ to $\Gamma\bs G$ defined as $\Lambda x\to \Gamma x$ is a well defined finite covering of a compact space. Hence, $\Lambda\bs G$ is compact.

Let $\ell_1, \ldots, \ell_k$ be elements in $\Gamma$ such that $\Gamma=\bigcup_{i=1}^k \Lambda\, \ell_i$ and denote $p_i:=\pr^G_H(\ell_i)\in \rmP$.  Let $x\in \bar\rmP$, so that $x=\lim_nx_n$ with $x_n\in \rmP=\pr^G_H(\Gamma)$. Up to passing to a subsequence, we may assume that there is some $i_0\in \{1, \ldots, k\}$ for which $x_n= y_n p_{i_0}$,  with $y_n\in {\rm Q}$ for all $n\in\N$. Hence $y_n$ converges to some $y\in \bar {\rm Q}$, whence $x\in \bar{\rm Q}\, p_{i_0} $. This implies that 
\[
\bar\rmP=\bigcup_{i=1}^k\bar {\rm Q}\,p_{i}.
\]In this union, two sets either intersect or are disjoint since $\bar{\rm Q}$ is a subgroup of $\bar\rmP$. Therefore,  $\bar{\rm Q}_0=\bar\rmP_0$.
\end{proof}

To finish the section we present specific results on Lie LCP {\em solvmanifolds}, that is, on manifolds of the form $M=\Gamma\bs R$, where $R$ is a simply connected solvable Lie group and $\Gamma\subset R$ a lattice. 

First, we answer positively Question \ref{quiz} for LCP structures on solvmanifolds:

\begin{pro}\label{pro:charsol} On a solvmanifold $M=\Gamma\bs R$, every Lie LCP structure has simply connected characteristic group.
\end{pro}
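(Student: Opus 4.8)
The plan is to reduce the assertion to the simple connectedness of the reduced characteristic group $\bar\rmP_0$, and then to use that in the solvmanifold case the irreducible-factor group $H$ is both simply connected and solvable, which prevents $\bar\rmP_0$ from carrying any toric part.

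First I would feed the Lie LCP structure on $M=\Gamma\bs R$ into Proposition \ref{pro:11}, obtaining the associated LCP Lie algebra with maximal flat factor $(\mr,g,\theta,\mmu)$ and, as recalled in \S\ref{sec:LCPLieAlg} and at the beginning of Section 4, the orthogonal semi-direct decomposition $\mr=\mmu\rtimes_\alpha\mh$ with $\mh=\mmu^\bot$ a subalgebra, together with the corresponding decomposition $R=U\rtimes_\rho H$ at the group level (here one uses that $R$, admitting a lattice, is unimodular, so \cite{dBM24} applies). Since $R$ is solvable, $\mr$ is solvable, hence so is its subalgebra $\mh$; therefore $H$, being the simply connected Lie group with Lie algebra $\mh$, is a \emph{simply connected solvable} Lie group.

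Next, recall from Section 4 that the reduced characteristic group $\bar\rmP_0$ is by definition the identity component of $\bar\rmP$, the closure in $H$ of the subgroup $\rmP=\pr^G_H(\Gamma)$; in particular $\bar\rmP_0$ is a closed connected subgroup of $H$. I would then invoke the classical structural fact that a closed connected subgroup $T$ of a simply connected solvable Lie group $S$ is itself simply connected: indeed $S/T$ is then diffeomorphic to a Euclidean space, so the homotopy exact sequence of the fibre bundle $T\hookrightarrow S\to S/T$ forces $\pi_1(T)\cong\pi_1(S)=0$. Applying this with $S=H$ and $T=\bar\rmP_0$ gives that $\bar\rmP_0$ is simply connected. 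Consequently the characteristic group $\R^q\times\bar\rmP_0$ of the Lie LCP structure, realized by Corollary \ref{cor4.3} as an abelian subgroup of $R=U\rtimes_\rho H$, is a direct product of simply connected groups, hence simply connected.

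I do not foresee a genuine obstacle: the only point requiring a little care is the closedness of $\bar\rmP_0$ in $H$ (which holds because it is the identity component of the closed subgroup $\bar\rmP$), so that the structure statement for solvable Lie groups can be applied. The whole argument rests on $H$ being solvable; it is precisely the possible presence of a semisimple, and in particular a compact, factor in $H$ that makes the general Lie LCP case substantially harder and is the subject of Section 5.
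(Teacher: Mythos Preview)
Your proof is correct and follows essentially the same line as the paper's: both reduce to the classical fact (Varadarajan, Theorem 3.18.12) that a connected Lie subgroup of a simply connected solvable Lie group is simply connected. The only cosmetic difference is that the paper applies this fact directly to the whole characteristic group $U\times\bar\rmP_0$ viewed, via Corollary~\ref{cor4.3}, as a subgroup of the simply connected solvable group $R$, whereas you apply it to $\bar\rmP_0\subset H$ (noting that $H$ is simply connected solvable since $\mh\subset\mr$) and then take the product with $\R^q$. One small caveat: your homotopy-sequence justification presupposes that $S/T$ is Euclidean, which in the standard references is proved \emph{together with} the simple connectedness of $T$; it is safer to cite the structural theorem outright, as the paper does.
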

\begin{proof}By Corollary \ref{cor4.3}, the characteristic group $U\times \bar\rmP_0$ of $M$ is a Lie subgroup of $R$. Since $R$ is solvable and simply connected,  $U\times \bar\rmP_0$ is simply connected as well \cite[Theorem 3.18.12]{VAR}.
\end{proof}

The last result of this section involves a finite index subgroup of the lattice $\Gamma_0$ of the characteristic group of LCP solvmanifolds. For its proof, we make use of the following technical result on lattices on vector spaces.

\begin{lm}\label{5.1} Let $E$ be a real vector space, $\Gamma_0\subset E$ a lattice, $E_1,E_2\subset E$ two vector subspaces such that $E=E_1\oplus E_2$ and $f$ an endomorphism of $E$ such that $f(\Gamma_0)\subset \Gamma_0$, $f(E_i)\subset E_i$ for $i=1,2$, and such that the restriction to $E_1$ of $f-\Id_E$ is invertible. Denote by $\pi_i:E\to E_i$ the projections with respect to the above direct sum decomposition and assume that $\pi_2(\Gamma_0)$ is dense in $E_2$. Then $f-\Id_E$ is invertible.
\end{lm}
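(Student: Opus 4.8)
The plan is to show that $f-\Id_E$ is surjective (equivalently injective, since $E$ is finite-dimensional); because it is already invertible on $E_1$, it suffices to show that for every $w\in E_2$ there exists $v\in E$ with $(f-\Id_E)v=w$. The key observation is that $f-\Id_E$ preserves both $E_1$ and $E_2$, so with respect to the decomposition $E=E_1\oplus E_2$ it has block-triangular form
\begin{equation*}
f-\Id_E=\begin{pmatrix} A & C\\ 0 & B\end{pmatrix},
\end{equation*}
where $A=(f-\Id_E)|_{E_1}$ is invertible by hypothesis. Thus $f-\Id_E$ is invertible if and only if $B=(f-\Id_E)|_{E_2}=\pi_2\circ(f-\Id_E)|_{E_2}$ is invertible, and the whole problem reduces to proving that $\pi_2\bigl((f-\Id_E)(E_2)\bigr)=E_2$, or equivalently that $\pi_2\circ f\circ \iota_2-\Id_{E_2}$ is invertible, where $\iota_2\colon E_2\hookrightarrow E$ is the inclusion.

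Next I would bring in the lattice. Since $f(\Gamma_0)\subset\Gamma_0$ and $f(E_2)\subset E_2$, applying $\pi_2$ gives $(\pi_2\circ f)(\Gamma_0)\subset\pi_2(\Gamma_0)$; but $\pi_2\circ f=\pi_2\circ f$ kills nothing extra and, crucially, because $f(E_1)\subset E_1=\ker\pi_2$, we get $\pi_2\circ f=\pi_2\circ f\circ\pi_2$ on all of $E$. Hence $B':=\pi_2\circ f|_{E_2}$ is an endomorphism of $E_2$ satisfying $B'(\pi_2(\Gamma_0))\subset\pi_2(\Gamma_0)$. Now suppose for contradiction that $B=B'-\Id_{E_2}$ is not invertible, so $\ker B\neq 0$. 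The idea is to exploit density of $\pi_2(\Gamma_0)$ in $E_2$ together with the fact that $\Gamma_0$ is a lattice (finitely generated of rank $=\dim E$), which forces an algebraicity/arithmetic constraint: $B'$ is an endomorphism of $E_2$ that maps a dense subgroup $\pi_2(\Gamma_0)$ into itself, so $1$ must be an eigenvalue of $B'$ that is "visible" over a finitely generated subgroup, and I would use a standard closure argument — the closure of $B'^n(\pi_2(\Gamma_0))$ behaves like $\pi_2(\Gamma_0)$ again — to derive the required contradiction.

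More concretely, the cleanest route is the following. Write $\Lambda:=\pi_2(\Gamma_0)$, a dense subgroup of $E_2$ with $B'(\Lambda)\subset\Lambda$. If $B=B'-\Id_{E_2}$ were singular, I would pass to the quotient $E_2/\overline{\im B}$; on this nonzero quotient space $B'$ induces the identity map, and $\Lambda$ projects to a dense subgroup. But the point is rather to use the \emph{image} side: the subgroup $B(\Lambda)=(B'-\Id)(\Lambda)\subset\Lambda$ is contained in $\im B$, which is a proper subspace of $E_2$; since $B'$ commutes with $B$, $B'$ preserves $\im B$, and one iterates to see that $\Lambda$ is "trapped" near the proper subspace $\im B$ in a way incompatible with density. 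The sharpest formulation: choose a linear functional $\ell\in E_2^*$ vanishing on $\im B$ and nonzero; then $\ell(B(\lambda))=0$ for all $\lambda\in\Lambda$, i.e. $\ell(B'\lambda)=\ell(\lambda)$. Combined with $B'(\Lambda)\subset\Lambda$ and density of $\Lambda$, one shows $\ell\circ B'=\ell$ on all of $E_2$, so $\ell$ descends to a $B'$-invariant functional; but we may then pick $\ell$ transverse to more and more of the generalized $1$-eigenspace, and a dimension count using that $\dim E_2<\infty$ while $\Lambda$ is dense produces the contradiction with invertibility of $A$ only if the generalized $1$-eigenspace of $f-\Id_E$ — which we now know lies inside $E_2$ by triangularity and invertibility of $A$ — is forced to be zero by density.

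The main obstacle I anticipate is precisely this last step: turning "density of $\pi_2(\Gamma_0)$ in $E_2$" together with "$f$ preserves the lattice $\Gamma_0$" into the rigidity statement that $f-\Id$ cannot have a nontrivial (generalized) $1$-eigenspace inside $E_2$. The natural tool is that $\Gamma_0$ being a lattice in $E$ means it is a finitely generated free abelian group of rank $\dim E$, so $f$ acts on it through a matrix in $\GL(\dim E,\Z)$ in a suitable basis; one then argues that if $1$ is an eigenvalue of $f$ with eigenvector in $E_2$, the corresponding rational invariant subspace of $\Gamma_0\otimes\Q$ meets $E_1$ nontrivially or forces $\pi_2(\Gamma_0)$ to lie in a proper rational subspace of $E_2$ — contradicting density. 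Making this dichotomy airtight (in particular handling nondiagonalizable $f$, where one works with the generalized eigenspace $\ker(f-\Id)^{\dim E}$) is the technical heart of the argument; everything else is linear algebra of block-triangular maps.
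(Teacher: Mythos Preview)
Your reduction to showing that $B=(f-\Id_E)|_{E_2}$ is invertible is fine (and since $f$ preserves \emph{both} $E_i$, the off-diagonal block $C$ is actually zero: $f-\Id_E$ is block-diagonal, not merely block-triangular). You also correctly identify the decisive ingredient: in a $\Z$-basis of $\Gamma_0$, the matrix $A$ of $f$ has integer entries. But the argument never closes. Your functional $\ell$ vanishing on $\im B$ only recovers the tautology $\ell\circ B'=\ell$ (true on all of $E_2$, no density needed), and the proposed dichotomy---that the rational subspace $\ker(f-\Id)$ either meets $E_1$ nontrivially or traps $\pi_2(\Gamma_0)$ in a proper subspace---is false as stated: $\ker(f-\Id)\subset E_2$ automatically, so the first branch never fires, and a rational subspace sitting inside $E_2$ does not by itself constrain $\pi_2(\Gamma_0)$.

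The step you are missing is to work on the dual side, which is exactly what the paper does. If $f-\Id_E$ is singular then so is $A^T-I$, and one can pick a nonzero \emph{integer} vector $x\in\ker(A^T-I)$. Since $E_1\subset\im(A-I)$, duality gives $x\in E_1^\perp$ (for the inner product making the $\Gamma_0$-basis orthonormal). The functional $\langle x,\cdot\rangle$ is then integer-valued on $\Gamma_0$, so $\Gamma_0\subset x^\perp+\Z\cdot\tfrac{x}{|x|^2}$; because $E_1\subset x^\perp$, applying $\pi_2$ sends $x^\perp$ into the hyperplane $x^\perp\cap E_2$ of $E_2$, hence $\pi_2(\Gamma_0)$ into a countable union of parallel hyperplanes---contradicting density. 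An equivalent formulation, closer to your $\im B$ idea but which you never complete, is: $W:=\im(f-\Id_E)$ is a \emph{rational} subspace of $E$ containing $E_1$, so $\Gamma_0$ projects to a \emph{lattice} in $E/W\cong E_2/\im B$; but this projection factors through $\pi_2$, so the image of the dense set $\pi_2(\Gamma_0)$ in the nonzero space $E_2/\im B$ would be simultaneously dense and discrete. Either route hinges on the passage from ``$A$ has integer entries'' to ``$\im(A-I)$ (equivalently $\ker(A^T-I)^\perp$) is rational, with lattice quotient'', and that is precisely the move your sketch gestures at but never makes.
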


\begin{proof} Let $k$ be the dimension of $E$. The lattice $\Gamma_0$ is a free abelian group of rank $k$. Let $e_1,\ldots,e_k$ be a system of generators of $\Gamma_0$ (and in the same time a basis of $E$). The matrix $A$ of $f$ with respect to this basis has integer coefficients. We endow $E$ with the scalar product such that $\{e_i\}_{i=1}^k$ is an orthonormal basis. 

Assume for a contradiction that $f-\Id_E$ is not invertible and let $x$ be a non-zero vector in $\ker(A^T-I_k)$. Since $A\in M_k(\Z)$, one can choose $x$ such that it has integer coefficients. By assumption, $E_1\subset \mathrm{im}(A-I_k)$, whence by duality, $\ker(A^T-I_k)\subset E_1^\perp$. This shows that $x\in E_1^\perp$, so by duality again, $E_1\subset x^\perp$. Every vector $z\in\Gamma_0$ can be written $z=(z-\langle x,z\rangle \frac{x}{|x|^2})+\langle x,z\rangle\frac{x}{|x|^2}$, thus showing that $\Gamma_0$ is contained in $x^\perp+\frac{x}{|x|^2}\Z$.

Now, for every $y\in x^\perp$, we have $\pi_2(y)=y-\pi_1(y)\in x^\perp$, so $\pi_2(x^\perp)\subset x^\perp\cap E_2=:W$. This intersection is not equal to $E_2$ (otherwise, since we already have $E_1\subset x^\perp$, we would obtain $E\subset x^\perp$, which is impossible), so $W$ is a hyperplane in $E_2$. Consequently, $\pi_2(x^\perp)$ is contained in the hyperplane $W$, whence $\pi_2(\Gamma_0)$ is contained in the closed (strict) subset $W+\pi_2(\frac{x}{|x|^2})\Z$ of $E_2$, contradicting the fact that it has to be dense in $E_2$.
\end{proof}

\begin{pro} \label{pro:findsol} Let $M=\Gamma\bs R$ be a Lie LCP solvmanifold defined by a solvable LCP Lie algebra $(\mr,g,\theta,\mmu)$ and a lattice $\Gamma$ in the simply connected solvable Lie group $R$ with Lie algebra $\mr$. Denote as before by $\bar\rmP_0$ the reduced characteristic group of the LCP structure (which is simply connected by Proposition \ref{pro:charsol}) and by $\Gamma_0:=\Gamma\cap (U\times \bar \rmP_0)$ which is normal in $\Gamma$ by Proposition \ref{4.8} and a lattice in $U\times \bar \rmP_0$ by Proposition \ref{pro:Kp0g0gen}. Then the group $[\Gamma,\Gamma_0]$ of elements of $\Gamma_0$ generated by commutators of an element in $\Gamma$ and an element of $\Gamma_0$ has finite index in $\Gamma_0$. 
\end{pro}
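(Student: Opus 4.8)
The plan is to produce a \emph{single} element $\gamma\in\Gamma$ whose conjugation action on the characteristic group $V:=U\times\bar\rmP_0$ already generates a finite‑index subgroup of $\Gamma_0$, and to detect this using Lemma \ref{5.1}.

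First I would record the structural facts that make Lemma \ref{5.1} applicable. The group $V=U\times\bar\rmP_0$ is abelian (Corollary \ref{cor4.3}) and, being the characteristic group of a Lie LCP solvmanifold, simply connected (Proposition \ref{pro:charsol}); hence $V\cong\R^k$ as a Lie group, with $k=\dim V$, and under this identification $\Gamma_0$ is a full‑rank (discrete cocompact) lattice of the vector space $V$ by Proposition \ref{pro:Kp0g0gen}. Moreover, since the Lie LCP structure is by definition non‑exact, the homomorphism $\varphi$ is not $\Gamma$‑invariant, and as $\varphi$ is a group homomorphism this provides $\gamma\in\Gamma$ with $\varphi(\gamma)\ne 0$; writing $\gamma=\gamma_1\gamma_2$ with $\gamma_1\in U$ and $\gamma_2=\pr^G_H(\gamma)\in H$, and using $\theta|_\mmu=0$, one gets $\varphi(\gamma_2)=\varphi(\gamma)\ne 0$.

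Next I would examine the map $f\colon V\to V$, $v\mapsto\gamma v\gamma^{-1}$, which is a Lie group automorphism of $V$, hence linear, and verify the hypotheses of Lemma \ref{5.1} with $E=V$, $E_1=U$, $E_2=\bar\rmP_0$: (i) $f(\Gamma_0)\subseteq\Gamma_0$ because $\Gamma_0$ is normal in $\Gamma$ (Proposition \ref{4.8}); (ii) $f(U)\subseteq U$ because $U$ is normal in $G$, with $f|_U=\rho_0(\gamma_2)$; (iii) $f(\bar\rmP_0)\subseteq\bar\rmP_0$: a direct computation in $U\rtimes_\rho H$, using $\rho(x)=\Id_U$ for $x\in\bar\rmP_0$ (Proposition \ref{pro:xuux}), gives $\gamma x\gamma^{-1}=\gamma_2 x\gamma_2^{-1}$, which lies in $\bar\rmP_0$ since $\bar\rmP_0$ is normal in $\bar\rmP\ni\gamma_2$; (iv) the projection $\pi_2\colon V\to\bar\rmP_0$ is, under the identification, $\pr^G_H|_V$, so $\pi_2(\Gamma_0)=\rmP\cap\bar\rmP_0$ by Lemma \ref{lm:pip}, and this is dense in $\bar\rmP_0$ by Lemma \ref{5.2}; and (v) $(f-\Id)|_U=\rho_0(\gamma_2)-\Id_\mmu$ is invertible: by Lemma \ref{lm:rhoisconf} the map $\rho_0(\gamma_2)$ is a similarity of $(\mmu,g_\mmu)$ of ratio $e^{\varphi(\gamma_2)}\ne 1$, so all its complex eigenvalues have modulus $e^{\varphi(\gamma_2)}\ne 1$, and in particular $1$ is not an eigenvalue. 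Lemma \ref{5.1} then gives that $f-\Id$ is invertible on all of $V$.

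Finally, for every $\delta\in\Gamma_0$ the commutator $[\gamma,\delta]=\gamma\delta\gamma^{-1}\delta^{-1}$ equals $(f-\Id)(\delta)$ in the additive notation of $V$; hence $[\Gamma,\Gamma_0]$ contains the subgroup $(f-\Id)(\Gamma_0)$ of the lattice $\Gamma_0$, which has full rank $k$ because $f-\Id$ is an invertible endomorphism of $V$ mapping $\Gamma_0$ into itself (so the index of $(f-\Id)(\Gamma_0)$ equals $|\det(f-\Id)|<\infty$). Therefore $[\Gamma,\Gamma_0]$ has finite index in $\Gamma_0$. The step requiring the most care is item (v): it is precisely here that the non‑exactness of the Lee form is used (to secure $\gamma$ with $\varphi(\gamma_2)\ne 0$), together with the elementary but essential fact that a genuine, non‑isometric similarity of a Euclidean space has no eigenvalue equal to $1$; everything else is bookkeeping inside the semi‑direct product $U\rtimes_\rho H$ combined with the structural facts about $\bar\rmP_0$ established earlier in this section.
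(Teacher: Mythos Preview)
Your proof is correct and follows essentially the same route as the paper's: both pick a single $\gamma\in\Gamma$ with $\varphi(\gamma)\ne 0$, set $f=\Ad_\gamma$ on the vector space $E=U\times\bar\rmP_0$, verify the hypotheses of Lemma~\ref{5.1} (preservation of $\Gamma_0$, of $U$, of $\bar\rmP_0$, density of $\pi_2(\Gamma_0)$, and invertibility of $(f-\Id)|_U$ via the strict similarity ratio $e^{\varphi(\gamma_2)}\ne 1$), and conclude that $(f-\Id)(\Gamma_0)\subset[\Gamma,\Gamma_0]$ has finite index. Your write-up is in fact slightly more detailed than the paper's on items (iii) and (v), but the argument is the same.
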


\begin{proof} Let $\gamma$ be an element of $\Gamma\subset R$ which is a strict homothety of $(R,e^{2\varphi}g)$ (where $\varphi$ is a primitive of $\theta$ on $R$). Then $\gamma^*\varphi=\varphi+c$ for some non-zero constant $c$. 
We write $\gamma=(\gamma_1, \gamma_2)$, with $\gamma_1=p_U(\gamma)$ and $\gamma_2=\pr^G_H(\gamma)$. For every $u\in U$, the adjoint action of $\gamma$ on $a:=(u,1)\in R=U\rtimes_\rho H$ reads by \eqref{eq:psd}:
\begin{equation*}
\Ad_\gamma(a)=\Ad_{\gamma_1}(\Ad_{\gamma_2}(a))=\Ad_{\gamma_1}(\rho(\gamma_2)a)=\rho(\gamma_2)a.
\end{equation*}
Therefore, by \eqref{eq:cr}, the adjoint action of $\gamma$ on $(U,g_\mmu)$ is a strict homothety. Moreover, $\Ad_\gamma$ is a morphism of the characteristic group $U\times \bar \rmP_0$, which is abelian and simply connected, so has a vector space structure. We denote by $E$ this vector space. By continuity, $f:=\Ad_\gamma$ is a linear automorphism of $E$, which clearly preserves  the two vector subspaces $E_1:=U$ and $E_2:=\bar \rmP_0$, and verifies $f(\Gamma_0)\subset \Gamma_0$ by Proposition \ref{4.8}.

Moreover, if $\pi_2$ denotes the projection from $E$ to $E_2$ parallel to $E_1$, then $\pi_2(\Gamma_0)$ is dense in $E_2$ by Lemma \ref{5.2}. In fact $\rmP\cap {\bar\rmP}_0=\pi_2(\Gamma_0)$ due to $\Gamma_0=\Gamma\cap (U\times \bar\rmP_0)\subset U\times (\rmP\cap \bar\rmP_0)$ by Lemma \ref{5.2}. We can thus apply Lemma \ref{5.1} and obtain that $f-\Id_E$ is invertible. Consequently, the image of $\Gamma_0$ by $f-\Id_E$ is a finite index subgroup of $\Gamma_0$. On the other hand, the commutator of $\gamma$ and some $x\in \Gamma_0$ reads $[\gamma,x]=\gamma x\gamma^{-1}x^{-1}=\Ad_\gamma(x)-x=(f-\Id_E)(x)$.
This shows that $[\Gamma,\Gamma_0]$ contains the finite index subgroup $(f-\Id_E)(\Gamma_0)$ of $\Gamma_0$, thus finishing the proof.
\end{proof}

\section{Reduction to the radical of $G$}

 The aim of this section is to prove the main result of the paper:

\begin{teo}\label{main}
    The characteristic group of a Lie LCP manifold $M:=\Gamma \bs G$ is contained in the radical of $G$.
\end{teo}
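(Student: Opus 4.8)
Following the strategy described in the introduction, the plan is to peel off the semisimple directions of $G$ one at a time and reduce to a product of a solvable group with a compact group, where the solvmanifold results of Section 4 can be leveraged. First I would write a Levi decomposition $H=R_0\rtimes S$ of the non-unimodular factor $H$ from $\mg=\mmu\rtimes_\alpha\mh$, and then bundle the abelian normal factor $U\simeq\R^q$ with the solvable radical to get a Levi decomposition $G=R\rtimes S$ with $R=\R^q\rtimes R_0$ the radical of $G$. Next I would split $S=S_1\times K$ with $K$ the product of those simple factors acting trivially on $R$ (equivalently, on which the conformal representation is trivial, so $\theta$ vanishes there too) and $S_1$ the product of the remaining simple factors, yielding $G=(R\rtimes S_1)\times K$. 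The key structural input here is that $\theta$, being a closed $1$-form, must vanish on $[\mg,\mg]\supset\mathfrak s$, so in particular $\theta|_{\mathfrak s}=0$ and $\bar\rmP_0$ is forced away from the semisimple directions via Proposition \ref{pro:vanish}.

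The first main step is to show that the reduced characteristic group projects trivially onto $S_1$; this is where I would invoke Raghunathan's theorem on intersections of lattices with the radical: since $\Gamma$ is a lattice in $G$, the projection $\pr^G_{S_1}(\Gamma)$ is a lattice in $S_1$ (after quotienting by $K$ and $R$ in the appropriate order), hence discrete, so $\bar\rmP_0$ — being connected and mapping into the closure of a discrete subgroup of $S_1$ — maps to the identity in $S_1$. One then checks, using Lemma \ref{5.2} and Proposition \ref{pro:finsg}, that replacing $\Gamma$ by $\Sigma:=\Gamma\cap(R\times K)$ (a finite-index, hence co-compact, subgroup) does not change $\bar\rmP_0$, reducing the whole problem to the case $G=R\times K$ with $R$ solvable simply connected and $K$ compact. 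This is Proposition \ref{p=q} in the paper.

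The second main step handles $G=R\times K$. Using the Tits alternative on $\pr^G_K(\Gamma)\subset K$: it cannot contain a non-abelian free subgroup (it is a quotient of the virtually-polycyclic $\Gamma$, which sits as a lattice in the solvable-by-compact group $R\times K$), so it is virtually solvable, and being a subgroup of a compact Lie group it is virtually abelian — this is Lemma \ref{lm:g2va}. Passing to a finite-index subgroup $\Lambda\subset\Gamma$ (allowed by Proposition \ref{pro:finsg}), I would arrange that $\pr^G_K(\Lambda)$ is abelian, hence has connected closure a torus $T\subset K$; then I would use Proposition \ref{pro:findsol} applied to the solvable part: a finite-index subgroup of $\Gamma_0$ (the lattice in the characteristic group) lies in $[\Gamma,\Gamma_0]$, and since commutators of elements of $\Gamma$ with $\Gamma_0$ live inside $R$ (as $\theta$ and the conformal action see only the $R$-part and $\bar\rmP_0\subset R$ by Proposition \ref{pro:xuux} and the vanishing of $\theta$ on $\mathfrak k$), one concludes the $R_0\times K$-projection of $\Gamma$ is the graph of a homomorphism $R_0\to K$ with finite image. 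A homomorphism from a solvable simply connected (hence contractible) group to a compact Lie group with finite image is forced to factor through $\pi_0$, hence is trivial on the identity component, which pins $\bar\rmP_0$ inside $R$. Therefore the characteristic group $U\times\bar\rmP_0\subset R$, and $R$ being solvable simply connected, Theorem \ref{main} follows.

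\textbf{Main obstacle.} I expect the delicate point to be the bookkeeping in the first step — verifying that the successive projections and intersections with $\Gamma$ (first killing $S_1$, then isolating $R\times K$) really do preserve both co-compactness and the reduced characteristic group. Raghunathan's theorem gives discreteness of the projection to $S_1$, but one must be careful that $\Gamma\cap(R\rtimes S_1\cdot\{e_K\})$-type intersections are lattices in the relevant subgroups and that Lemma \ref{5.2}/Proposition \ref{pro:finsg} legitimately transfer $\bar\rmP_0$ across these reductions; the commutativity relation \eqref{abc} for iterated projections in semi-direct products will be doing a lot of silent work here. The compact-factor argument via Tits is comparatively clean once the reduction is in place.
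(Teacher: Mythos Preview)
Your outline tracks the paper's strategy faithfully at the level of the introduction, but two of the reductions are misdescribed in ways that would leave genuine gaps.

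\textbf{First gap: the reduction to $R\times K$.} You write that $\Sigma:=\Gamma\cap(R\times K)$ is ``a finite-index, hence co-compact, subgroup'' and then invoke Proposition~\ref{pro:finsg}. This is false in general: when $S_1$ is nontrivial, $\Gamma/\Sigma$ surjects onto the lattice $\pr^G_{S_1}(\Gamma)$ in $S_1$, which is infinite. So Proposition~\ref{pro:finsg} does not apply. What Proposition~\ref{p=q} actually does is prove \emph{directly} that $\Sigma$ is a lattice in the smaller group $R\times K$ (using Raghunathan again, this time that $\Gamma^L\cap R$ is a lattice in $R$), and then match the two reduced characteristic groups by a density/sequence argument using Lemma~\ref{5.2} and Proposition~\ref{4.14}. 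You correctly flag this step as the delicate one, but the difficulty is not bookkeeping with iterated projections --- it is that the passage from $G$ to $R\times K$ is \emph{not} a finite-index move.

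\textbf{Second gap: the endgame in $R\times K$.} The graph homomorphism $f$ goes from the \emph{discrete} group $\rmP^R\subset R_0$ to $\rmP^K\subset K$, not from the Lie group $R_0$ to $K$; your sentence about ``a homomorphism from a solvable simply connected (hence contractible) group \ldots\ forced to factor through $\pi_0$'' does not apply. The actual mechanism is: after passing to finite index so that $\rmP^K$ is abelian, $f$ kills $[\rmP^R,\rmP^R]$; Proposition~\ref{pro:findsol} gives that $[\rmP^R,\rmP^R\cap\overline{(\rmP^R)}_0]$ has finite index in $\rmP^R\cap\overline{(\rmP^R)}_0$, so $f$ has \emph{finite image} on $\rmP^R\cap\overline{(\rmP^R)}_0$. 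Then $\rmP\cap\bar\rmP_0$ lies in $(\rmP^R\cap\overline{(\rmP^R)}_0)\times\{\text{finite set}\}$, and connectedness of $\bar\rmP_0=\overline{\rmP\cap\bar\rmP_0}$ forces the $K$-component to be trivial. Your paragraph gestures at this but the stated reason (``commutators \ldots\ live inside $R$ \ldots\ by Proposition~\ref{pro:xuux}'') is not the operative one.

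\textbf{Minor point.} In the paper $K$ is the product of the \emph{compact} simple factors of $(\ker\tau)_0$, not all simple factors acting trivially on $R$; any non-compact simple factor in $\ker\tau$ stays in $S_1$. This matters because Lemma~\ref{lm:gam1lat} and the subsequent arguments need $K$ compact.
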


Let $(g,\theta)$ be a Lie LCP structure on $M:=\Gamma \bs G$ and let $(\mg, g,\theta,\mmu)$ be the corresponding LCP Lie algebra with maximal factor.

As in the previous sections we consider $\mh:=\mmu^\bot$ so that $\mg=\mmu\rtimes_\alpha \mh$ and the corresponding simply connected Lie groups $U,H$ so that $G$ is isomorphic to $U\rtimes_\rho H$, where $\rho:H\to \Aut(U)$ is the conformal representation induced by $\alpha$ in \eqref{mg}.  By Proposition \ref{pro:dRdec}, the de Rham decomposition of $(G,e^{2\varphi}g)$ is $(U, g_\mmu)\times (H,e^{2\varphi}g_\mh)$, where $g_\mmu,g_\mh$ are the left-invariant metrics induced by the restriction of $g$ to $\mmu, \mh$, respectively, and $\varphi:G\to \R$ is the homomorphism verifying $\d\varphi=\theta$ given by Lemma \ref{lm:phihom}.

Fix a Levi decomposition of $H$ as follows. Let $R_0$ be the radical of $H$ (i.e. the maximal connected solvable Lie subgroup) and let $S$ be a semisimple Levi factor of $H$ so that $H=R_0S$. Note that $S$ is simply connected since $H$ is so \cite[Theorem 3.18.13]{VAR}. We denote by $\tau_0:S\to \Aut(R_0)$ the action of $S$ on $R_0$ by conjugation and write
\begin{equation}\label{eq:LeviH}
H=R_0\rtimes_{\tau_0} S.
\end{equation}

Since $U$ is abelian and normal in $G$, $R:=U\rtimes_\rho R_0$ is solvable and connected, and $G=RS$, hence 
\begin{equation}\label{eq:LeviG}
G=R\rtimes_\tau S= (U\rtimes_\rho R_0)\rtimes_\tau S
\end{equation}
is a Levi decomposition of $G$ where we denote by $\tau:S\to \Aut(R)$ the action of $S$ on $R$ by conjugation. Note that $\tau(s)|_U=\rho(s)$ and $\tau(s)|_{R_0}=\tau_0(s)$ for all $s\in S$.

The simply connected semisimple Lie group $S$ has a unique decomposition (up to order) as a product of simple Lie groups. Therefore, any connected normal subgroup is a product of some of these factors. In particular, the connected component of the kernel of $\tau$,  $(\ker \tau)_0\subset S$, can be written as such a product; we denote by $K$ the product of the compact factors of $(\ker \tau)_0$. Let $S_1$ be the product of simple and simply connected factors of $S$ which are not in $K$, so that
\begin{equation}\label{eq:SS}
S=S_1\times K.
\end{equation} 

Recall that the characteristic group $U\times \bar\rmP_0$ of the Lie LCP structure $(g,\theta)$ on $M=\Gamma\bs G$ is an abelian subgroup of $G$ (by Corollary \ref{cor4.3}). 

 Theorem \ref{main} is equivalent to showing that the reduced characteristic group $\bar\rmP_0$ is contained in $R_0$. We start by showing that this holds in some particular case.

\begin{pro}\label{pro:fker}
If  the connected component of the identity of the kernel of $\tau$ has no compact factor, then $\bar\rmP_0\subset R_0$.
\end{pro}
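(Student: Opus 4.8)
The plan is to exploit the hypothesis that $(\ker\tau)_0$ has no compact factor, which by \eqref{eq:SS} means $K$ is trivial and hence $S=S_1$, i.e. every simple factor of $S$ acts nontrivially on $R=U\rtimes_\rho R_0$. First I would invoke a theorem of Raghunathan on arithmetic-type lattices in Lie groups: for the Levi decomposition $G=R\rtimes_\tau S$, if no simple factor of $S$ acts trivially on the radical, then the intersection $\Sigma:=\Gamma\cap R$ is a lattice in $R$ and the projection $\pr^G_S(\Gamma)$ is discrete in $S$ (more precisely $\Gamma$ is commensurable with $(\Gamma\cap R)\cdot(\Gamma\cap S)$ in the relevant sense). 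The key consequence I want to extract is that the image $\pr^G_S(\Gamma)$ is \emph{discrete} in $S$.

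Next I would use this discreteness to control the reduced characteristic group. Recall $\rmP=\pr^G_H(\Gamma)$ and $\bar\rmP_0$ is the identity component of its closure $\bar\rmP$ in $H$. Composing with the projection $\pr^H_S:H=R_0\rtimes_{\tau_0}S\to S$ (which by \eqref{abc} agrees with $\pr^G_S$ restricted appropriately), we get that $\pr^H_S(\rmP)=\pr^G_S(\Gamma)$ is discrete in $S$. Since $\pr^H_S$ is a continuous group morphism, $\pr^H_S(\bar\rmP)\subseteq\overline{\pr^H_S(\rmP)}=\pr^H_S(\rmP)$ is discrete in $S$; therefore the connected subgroup $\pr^H_S(\bar\rmP_0)$ is trivial, i.e. $\bar\rmP_0\subseteq\ker(\pr^H_S)=R_0$. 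This is exactly the assertion that the reduced characteristic group is contained in $R_0$, which as noted just before the proposition is equivalent to Theorem \ref{main}'s conclusion in this case, since the characteristic group $U\times\bar\rmP_0$ then lies in $U\rtimes_\rho R_0=R$, the radical of $G$.

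The main obstacle I expect is justifying the discreteness of $\pr^G_S(\Gamma)$ rigorously: Raghunathan's results (in his book \emph{Discrete Subgroups of Lie Groups}, Chapter VIII) are typically stated for lattices and require that $S$ have no compact factors acting trivially on $R$ — precisely the hypothesis we have assumed. One must check the exact form of the statement applies here (e.g. that $\Gamma$ being a lattice in $G=R\rtimes S$ with this non-triviality condition forces $\Gamma\cap R$ to be a lattice in $R$ and the image in $S$ to be discrete), and possibly handle the subtlety that $S$ is simply connected but the relevant quotient may not be an algebraic group. A secondary technical point is verifying the compatibility of the various projection maps $\pr^G_S$, $\pr^G_H$, $\pr^H_S$ via \eqref{abc}, and that passing to the closure commutes with these continuous morphisms in the needed direction — but this is routine once the discreteness input is secured.
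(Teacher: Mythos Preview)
Your proposal is correct and follows essentially the same approach as the paper. Both arguments reduce to showing that $\pr^H_{S}(\rmP)=\pr^G_{S}(\Gamma)$ is discrete (the paper cites Raghunathan's Corollary 8.28 to get that it is in fact a lattice in $S$), and then conclude that the connected image $\pr^H_{S}(\bar\rmP_0)$ must be trivial; the paper phrases the closure step via convergent sequences while you use the general inclusion $\pr^H_S(\bar\rmP)\subset\overline{\pr^H_S(\rmP)}$, but these are equivalent.
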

\begin{proof}The hypothesis is equivalent to $K$ in \eqref{eq:SS} being trivial, so one can write the different semi-direct product decompositions of $G$ as
\begin{equation}
G=(U\rtimes_\rho R_0)\rtimes_\tau S_1=U\rtimes_\rho (R_0\rtimes_{\tau_0} S_1),
\end{equation}
with 
$$R=U\rtimes_\rho R_0\qquad\mbox{and}\qquad H=R_0\rtimes_{\tau_0} S_1.$$
Since $(\ker\tau)_0$ has no compact factors, $\pr^G_{S_1}(\Gamma)$ is a lattice in $S_1=R\bs G$ 
by \cite[Corollary 8.28]{RA} (see also \cite[Proposition 1.3]{Wu88}). Using \eqref{abc}, this implies that $\pr^H_{S_1}(\rmP)=\pr^H_{S_1}\circ \pr^G_H(\Gamma)=
\pr^G_{S_1}(\Gamma)$ is also a  lattice in $S_1$.

Let $x\in \bar \rmP$ and let $x_n\in \rmP\subset H$ such that $x_n\to x$. Then $\pr^H_{S_1}(x_n)\in \pr^H_{S_1}(\rmP)$ converges to $\pr^H_{S_1}(x)\in \pr^H_{S_1}(\bar\rmP)\subset S_1$. Since $\pr^H_{S_1}(\rmP)$ a lattice in $S_1$, the latter sequence is stationary, i.e.~there exists $n_0\in \N$ with $\pr^H_{S_1}(x_n)=\pr^H_{S_1}(x)$ for every $n\ge n_0$. This implies $\pr^H_{S_1}(\bar\rmP)= \pr^H_{S_1}(\rmP)$ and thus $\pr^H_{S_1}(\bar\rmP_0)\subset  \pr^H_{S_1}(\rmP)$. As the former set is connected and the latter is a lattice, this shows that $\pr^G_H(\bar\rmP_0)$ is trivial and therefore $\bar\rmP_0\subset R_0$.
\end{proof}

We consider the group $L:=(U\rtimes_\rho R_0)\rtimes_\tau S_1$, so that $G=L\times K$.
We show next that the projection of $\Gamma$ in $L$ is a lattice. This follows from a general argument:
\begin{lm}\label{lm:gam1lat}
Let $\Gamma$ be a lattice in a direct product of Lie groups $G=L\times K$ with $K$ compact. Then the subgroup $\Gamma^L:=\pr^G_L(\Gamma)$ is a lattice in $L$. 
\end{lm}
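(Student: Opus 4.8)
The plan is to exploit the standard fact that projecting a lattice along a compact direct factor preserves the lattice property; the statement is essentially the assertion that $\pr^G_L$ kills only a compact piece of $\Gamma$, so the image stays discrete and cocompact. First I would consider the closure $\overline{\Gamma^L}$ of $\Gamma^L$ in $L$ and the subgroup $N:=\Gamma\cap(\{1\}\times K)$, which is a discrete subgroup of the compact group $K$, hence finite. The quotient $\Gamma/N$ injects into $L$ via $\pr^G_L$, so $\Gamma^L\cong\Gamma/N$ is a discrete-by-finite issue; the real content is cocompactness, which I would get directly: since $\Gamma\bs G$ is compact and $G=L\times K$, the image of $\Gamma\bs G$ under the map $\Gamma\bs(L\times K)\to\Gamma^L\bs L$ (induced by $\pr^G_L$) is all of $\Gamma^L\bs L$, and the image of a compact set is compact, so $\Gamma^L\bs L$ is compact.

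The one genuinely nontrivial point is discreteness of $\Gamma^L$ in $L$. This is where compactness of $K$ is used essentially. I would argue as follows: suppose $(\ell_n)$ is a sequence in $\Gamma^L$ converging to $1$ in $L$; lift each $\ell_n$ to $\gamma_n=(\ell_n,k_n)\in\Gamma$ with $k_n\in K$. By compactness of $K$, after passing to a subsequence $k_n\to k$ for some $k\in K$, so $\gamma_n\to(1,k)$ in $G$. Now $(1,k)$ need not lie in $\Gamma$, but one can compare consecutive terms: $\gamma_n\gamma_{n+1}^{-1}\to(1,1)$ in $G$, and since $\Gamma$ is discrete in $G$, $\gamma_n\gamma_{n+1}^{-1}=1$ for $n$ large, i.e.\ the sequence $\gamma_n$ is eventually constant, hence so is $\ell_n$; being eventually equal to its limit $1$, this shows that $1$ is isolated in $\Gamma^L$. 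Translating, $\Gamma^L$ is discrete. (Equivalently: $\overline{\Gamma^L}\times K$ is a closed subgroup of $G$ containing the lattice $\Gamma$ cocompactly — since $\Gamma\subset\Gamma^L\times K$ — so $\overline{\Gamma^L}\times K$ has finite covolume in $G$ and $\Gamma$ is cocompact in it; but a closed subgroup of finite covolume containing a lattice forces $\overline{\Gamma^L}\times K$ to have the same dimension as $G$, i.e.\ $\overline{\Gamma^L}=L$ up to components, and then discreteness of $\Gamma$ in $G$ forces $\Gamma^L$ to be discrete in $L$. I would use whichever of these two arguments is cleanest in the writeup, probably the first.)

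I expect the main obstacle to be nothing deep but rather getting the discreteness argument stated cleanly without circularity: one must be careful that "$\Gamma^L$ discrete" is what is being proved, so one cannot invoke it, and the passage from "$\gamma_n$ eventually constant" to "$\ell_n$ eventually $1$" needs the subsequence to actually converge to $1$ in the $L$-coordinate, which it does by hypothesis. With discreteness and cocompactness in hand, $\Gamma^L$ is by definition a lattice in $L$, and since $L$ is a connected Lie group this is exactly the statement. I would also remark that the finiteness of $N=\Gamma\cap(\{1\}\times K)$ shows $\pr^G_L|_\Gamma$ is a finite-to-one homomorphism onto $\Gamma^L$, which is the form in which the lemma will be applied in the sequel.
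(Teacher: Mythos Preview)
Your proposal is correct and follows essentially the same route as the paper: compactness of $K$ to extract a convergent subsequence of lifts for discreteness, and the continuous surjection $\Gamma\bs(L\times K)\to\Gamma^L\bs L$ induced by $\pr^G_L$ for cocompactness. The only cosmetic difference is that in the discreteness step you compare consecutive terms $\gamma_n\gamma_{n+1}^{-1}\to 1$, whereas the paper uses directly that a convergent sequence in the (closed) discrete set $\Gamma$ is stationary; both are fine. Your parenthetical alternative via $\overline{\Gamma^L}\times K$ is murkier and, as you suspected, the first argument is the one to keep.
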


\begin{proof}It is clear that $\Gamma^L$ is a subgroup of $L$. 
We first show that $\Gamma^L$ is discrete. Assume for a contradiction that there exists a sequence $\{\gamma^L_n\}_{n\in\N}\subset L$ such that $\gamma_n^L\to x\in L$ and $\gamma_n^L\ne x$ for every $n\in \N$. For each $n\in \N$, there exists $\gamma_n^K\in K$ so that $(\gamma_n^L,\gamma_n^K)\in\Gamma$. Since $K$ is compact, there exists a subsequence $\{\gamma^K_{n_k}\}_{k\in\N}$ which converges to some $y\in K$. Thus $(\gamma^L_{n_k},\gamma^K_{n_k})\to (x,y)$ as $k\to\infty$. However, $\Gamma$ is discrete in $G$, so this last sequence is stationary. In particular $\gamma_n^L=x$ for $n$ large enough, which contradicts our assumption. Thus $\Gamma^L$ is discrete in $L$.

To show that $\Gamma^L\bs L$ is compact, consider the continuous surjective map $\psi:L\times K\to \Gamma^L\bs L$, $(x,y)\mapsto\Gamma^L x$. For $(\gamma_1,\gamma_2)\in \Gamma$ and $(x,y)\in L\times K$, we can write $\psi((\gamma_1,\gamma_2)(x,y))=\psi(\gamma_1 x,\gamma_2 y)=\Gamma^L x=\psi(x,y)$. Hence $\psi$ induces a continuous surjective map $\bar\psi:\Gamma\bs G\to \Gamma^L\bs L$, so $\Gamma^L\bs L$ is compact. 
\end{proof}

Recall that the group $\rmP=\pr^G_H(\Gamma)$ is a subset of $H=R_0\rtimes_\tau (S_1\times K)$. According to this semi-direct product structure, elements in $H$ will be denoted by $(r,x,s)\in H$, where $r\in R_0$, $x\in S_1$, $s\in K$. We will now show that the elements of the reduced characteristic group $\bar\rmP_0$ have no component in $S_1$:
 
 \begin{pro}\label{4.14}
 The projection of $\bar\rmP_0$ on $S_1$ is trivial. 
 \end{pro}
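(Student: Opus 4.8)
The plan is to leverage Proposition \ref{pro:fker} together with Proposition \ref{pro:finsg}, realizing $\bar\rmP_0$ as the reduced characteristic group of a Lie LCP structure on a quotient by a sublattice for which the relevant Levi factor has no compact part. First I would consider the subgroup $\Gamma':=\Gamma\cap L$, where $L=(U\rtimes_\rho R_0)\rtimes_\tau S_1$, so that $G=L\times K$. The key observation is that $\Gamma'$ is a lattice in $L$: indeed, by Lemma \ref{lm:gam1lat} the projection $\Gamma^L=\pr^G_L(\Gamma)$ is a lattice in $L$, and one checks that $\Gamma'=\Gamma\cap L$ has finite index in $\Gamma^L$, or more directly that $\Gamma'$ is discrete (being a subgroup of $\Gamma$) and co-compact in $L$ (since $\Gamma^L\bs L$ is compact and $\Gamma'$ has finite index in $\Gamma^L$ because $K$ is compact, hence $\pr^G_K(\Gamma)$ is contained in a compact group and $\Gamma/\Gamma'$ embeds into $K$... — more carefully: $\Gamma/(\Gamma\cap L)$ injects into $K$ via $\pr^G_K$, and its closure is a compact subgroup of $K$, so $\Gamma\cap L$ is co-compact in the preimage, which contains $L$).

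Next I would restrict the Lie LCP structure $(g,\theta)$ to $L$. Since $L$ is a closed subgroup of $G=L\times K$ containing $U$ and $\Gamma'\subset L$ acts on $L$ by left translations with $\Gamma'\bs L$ compact, and since $\theta$ vanishes on the Lie algebra of $K$ — because $\mathfrak k\subset(\ker\tau)_0$ acts trivially, so in particular on $U$, and $\theta=\xi$ is built from the trace form which is insensitive to such a direct compact factor — the pair $(g|_L,\theta|_L)$ is a Lie LCP structure on $\Gamma'\bs L$, with the same flat factor $U$. Here the de Rham factor of the universal cover $L$ of $\Gamma'\bs L$ endowed with $e^{2\varphi}g$ is $(H_L:=R_0\rtimes_{\tau_0} S_1, e^{2\varphi}g_\mh)$, and for this Lie LCP structure the Levi factor $S_1$ has $(\ker\tau|_{S_1})_0$ with no compact factor by construction (all compact simple factors of $(\ker\tau)_0$ were absorbed into $K$). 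Therefore Proposition \ref{pro:fker} applies to $\Gamma'\bs L$ and shows that its reduced characteristic group is contained in $R_0$; in particular its projection on $S_1$ is trivial.

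It then remains to compare the reduced characteristic group of $\Gamma'\bs L$ with $\bar\rmP_0$, at the level of their projections to $S_1$. Let $\rmP':=\pr^L_{H_L}(\Gamma')$ and $\bar\rmP'_0$ its reduced characteristic group. Since $\Gamma'$ has finite index in $\Gamma^L$ and the natural map $\Gamma\to\Gamma^L$ is an isomorphism (both being identified with $\rmP$ via $\pr^G_H$ up to the projection $H\to H_L$ that kills $K$), $\rmP'$ has finite index in $\pr^G_{H_L}(\Gamma)$, and $\pr^G_{H_L}(\bar\rmP_0)$ equals $\overline{\pr^G_{H_L}(\rmP)}_0$. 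By an argument identical to that of Proposition \ref{pro:finsg} (replacing $\Gamma$ there by the finite index subgroup and tracking closures along the continuous morphism $\pr^H_{H_L}:H\to H_L$), the identity component of the closure of $\pr^G_{H_L}(\rmP)$ coincides with $\bar\rmP'_0$. Composing with $\pr^{H_L}_{S_1}$ and using that $\bar\rmP'_0\subset R_0$, we conclude $\pr^G_{S_1}(\bar\rmP_0)=\pr^{H_L}_{S_1}(\pr^G_{H_L}(\bar\rmP_0))=\pr^{H_L}_{S_1}(\bar\rmP'_0)$ is trivial, which is the claim.

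The main obstacle I anticipate is the bookkeeping in the last paragraph: carefully justifying that passing to the sublattice $\Gamma'$ (rather than to a finite-index subgroup of $\Gamma$ itself, which is what Proposition \ref{pro:finsg} literally covers) does not change the $S_1$-projection of the reduced characteristic group, since $\Gamma'$ need not contain a finite-index subgroup that surjects nicely — one must instead run the closure/stationarity argument directly for the morphism $H\to H_L$, as in the proof of Proposition \ref{pro:fker}, rather than quote Proposition \ref{pro:finsg} as a black box. A secondary technical point is verifying that $(g|_L,\theta|_L)$ is genuinely a Lie LCP structure with \emph{maximal} flat factor $U$ on $\Gamma'\bs L$; this follows from Proposition \ref{pro:11} once one notes that the associated LCP Lie algebra on $\mathrm{Lie}(L)=\mmu\rtimes(\mr_0\rtimes\ms_1)$ is obtained from $(\mg,g,\theta,\mmu)$ by the obvious restriction and that $\dim\mmu=q$ forces maximality exactly as in the proof of Proposition \ref{pro:11}.
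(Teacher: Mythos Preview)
Your approach has a genuine gap at the very first step. You claim that $\Gamma':=\Gamma\cap L$ is a lattice in $L$, arguing that $\Gamma/\Gamma'$ embeds into $K$ via $\pr^G_K$. But a subgroup of a compact group need not be finite: take for instance $G=\R\times\SU(2)$, pick $k\in\SU(2)$ of infinite order, and set $\Gamma=\langle(1,k)\rangle$. Then $\Gamma$ is a lattice in $G$ (its projection to $\R$ is $\Z$ and $\SU(2)$ is compact), yet $\Gamma\cap\R=\{0\}$. In general one computes $[\Gamma^L:\Gamma']=[\pr^G_K(\Gamma):\Gamma\cap K]$; the denominator is finite, but the numerator can be infinite. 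So $\Gamma'$ may fail to be co-compact in $L$, and your construction of a Lie LCP manifold $\Gamma'\bs L$ to which Proposition~\ref{pro:fker} applies does not get off the ground. The later comparison-of-characteristic-groups step that you flag as the ``main obstacle'' is therefore moot.

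The paper's proof bypasses all of this. It never intersects $\Gamma$ with $L$; instead it works with the \emph{projection} $\Gamma^L=\pr^G_L(\Gamma)$, which \emph{is} a lattice by Lemma~\ref{lm:gam1lat}, and then applies Raghunathan's result directly to conclude that $\pr^L_{S_1}(\Gamma^L)=\pr^H_{S_1}(\rmP)$ is a lattice in $S_1$, hence closed. It follows that $\pr^H_{S_1}(\bar\rmP)\subset\overline{\pr^H_{S_1}(\rmP)}=\pr^H_{S_1}(\rmP)$, and since $\bar\rmP_0$ is connected its image in this discrete group is trivial. This is exactly the ``closure/stationarity argument for the morphism'' you allude to in your final paragraph as a possible repair --- but it is the whole proof, not a repair, and it requires no auxiliary Lie LCP manifold at all.
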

  \begin{proof} 
We shall first prove that, in the notation above,  
$\pr^H_{S_1}(\bar\rmP)=\pr^H_{S_1}(\rmP)$ and this group is a lattice is $S_1$.

By Lemma \ref{lm:gam1lat}, $\Gamma^L=\pr^G_{L}(\Gamma)$ is a lattice in $L$, and since $S_1$ has no compact factors acting trivially on $R$ via the representation $\tau|_{S_1}:S_1\to \Aut(R)$, \cite[Corollary 8.28]{RA} implies that $\pr^L_{S_1}(\Gamma^L)$ is a lattice in $S_1$. 

On the other hand, by \eqref{abc}, we can express
$$\pr^H_{S_1}(\rmP)=\pr^H_{S_1}(\pr^G_H(\Gamma))=\pr^G_{S_1}(\Gamma)=\pr^L_{S_1}(\pr^G_{L}(\Gamma))=\pr^L_{S_1}(\Gamma^L).$$
Thus $\pr^H_{S_1}(\rmP)$ is a lattice in $S_1$, whence 
$$\pr^H_{S_1}(\bar\rmP)\subset \overline{\pr^H_{S_1}(\rmP)}=\pr^H_{S_1}(\rmP).$$
  
Now, since $\bar\rmP_0$ is connected, $\pr^H_{S_1}(\bar\rmP_0)$ is a connected subgroup of the lattice $\pr^H_{S_1}(\rmP)$, so it is trivial. 
\end{proof}

Using this result, we will now show that the characteristic group of $M$ is isomorphic to the one of a "reduced" Lie LCP manifold, obtained by suppressing the semisimple factor $S_1$ form $G$. Below, we will identify $R\times K$ (resp.~$R_0\times K$) with the subgroup $R\times\{1\}\times K$ (resp.~$R_0\times\{1\}\times  K$) of $G=(R\rtimes_\tau S_1) \times K$.

\begin{pro}\label{p=q} The subgroup  $\Sigma:=\Gamma\cap(R\times K)$ is a lattice in $R\times K$ and the compact manifold $\Sigma \bs  (R\times K)$ has a Lie LCP structure. Moreover, the Lie group $\bar\rmP_0$ associated to the LCP manifold $\Gamma\bs G$ coincides with the reduced characteristic group of the Lie LCP manifold $\Sigma \bs  (R\times K)$.
\end{pro}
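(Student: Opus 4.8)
The plan is to exploit Proposition \ref{4.14}: since $\bar\rmP_0$ has trivial projection on $S_1$, the reduced characteristic group already "lives" in the $R_0\times K$ part, so passing from $G$ to $R\times K$ should not change anything. I would proceed as follows. First, I would establish that $\Sigma=\Gamma\cap(R\times K)$ is a lattice in $R\times K$. Here $R\times K$ is exactly $L\times K$ with $S_1$ removed, equivalently $R\times K$ is the kernel of $\pr^G_{S_1}:G\to S_1$. By Lemma \ref{lm:gam1lat}, $\Gamma^L=\pr^G_L(\Gamma)$ is a lattice in $L$, and by \cite[Corollary 8.28]{RA}, $\pr^L_{S_1}(\Gamma^L)=\pr^G_{S_1}(\Gamma)$ is a lattice in $S_1$; the same reference (or the standard fact that the kernel of the projection onto the semisimple factor intersected with a lattice is a lattice in the solvable radical, applied to $G=R\rtimes_\tau S_1$ after including the compact factor $K$ in the solvable-type part) then gives that $\Gamma\cap(R\times K)$ is a lattice in $R\times K$. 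I would spell this out by noting $R\times K$ is the radical-times-compact of $G$ with respect to the Levi decomposition $G=(R\times K)\rtimes S_1$ once we absorb $K$; since $K$ is compact the relevant statement of Ragunathan still applies to $G'=(R\times K)\rtimes S_1$.

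Next I would check that $\Sigma\bs(R\times K)$ carries a Lie LCP structure. The metric $g$ and the Lee form $\theta$ restrict to left-invariant objects on $R\times K$ (using that $\mg=\mmu\rtimes_\alpha\mh$ and that $R\times K$ corresponds to the subalgebra $\mmu\rtimes(\mr_0\oplus\mk)\subset\mg$, where $\theta$ vanishes on $\mk$ by Proposition \ref{pro:vanish}-type reasoning, or simply because $\theta\in\mh^*$ and we restrict it to the subalgebra). Then $(g|_{R\times K},\theta|_{R\times K},\mmu)$ is an LCP Lie algebra with the same flat factor $\mmu=\R^q$, and maximality of the flat factor is inherited because $q=\dim\mmu$ is forced by the dimension of the flat de Rham factor, which does not shrink. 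Hence by Proposition \ref{pro:11}, $\Sigma\bs(R\times K)$ is a Lie LCP manifold, and crucially its flat factor $U$ and irreducible factor (the simply connected group with Lie algebra $\mr_0\oplus\mk$, call it $H'$) satisfy $R\times K=U\rtimes_\rho H'$.

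The heart of the argument is the identification of the reduced characteristic groups. Write $\rmP'=\pr^{R\times K}_{H'}(\Sigma)$. On one hand, since $\Sigma=\Gamma\cap(R\times K)=\Gamma\cap\ker(\pr^G_{S_1})$, every $\sigma\in\Sigma$ has $\pr^G_H(\sigma)\in H'=R_0\times K$, i.e. its $S_1$-component is trivial; conversely every $\gamma\in\Gamma$ whose image $\pr^G_H(\gamma)$ has trivial $S_1$-component lies in $R\times K$, hence in $\Sigma$. Therefore $\rmP'=\rmP\cap H'=\{p\in\rmP : \pr^H_{S_1}(p)=1\}$, and since $\pr^G_H$ is an isomorphism from $\Gamma$ to $\rmP$ (Lemma \ref{lm:2.5}) and from $\Sigma$ to $\rmP'$, we get that $\rmP'$ is the kernel of the composite $\rmP\xrightarrow{\pr^H_{S_1}}S_1$. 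Taking closures in $H$: since $\pr^H_{S_1}(\rmP)$ is a lattice (hence discrete) in $S_1$ by the proof of Proposition \ref{4.14}, the closure $\bar\rmP$ maps under $\pr^H_{S_1}$ onto the same discrete set $\pr^H_{S_1}(\rmP)$, so $\bar\rmP\cap H'=\overline{\rmP\cap H'}=\overline{\rmP'}=\bar\rmP'$ (using that passing to the kernel of a map to a discrete group commutes with closure). Finally $\bar\rmP_0\subset\bar\rmP\cap H'$ because $\bar\rmP_0$ is connected with trivial $S_1$-projection (Proposition \ref{4.14}); hence $\bar\rmP_0$ is the identity component of $\bar\rmP\cap H'=\bar\rmP'$, which is by definition $\bar\rmP'_0$, the reduced characteristic group of $\Sigma\bs(R\times K)$. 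I expect the main obstacle to be the bookkeeping around the closures: one must be careful that $\overline{\rmP\cap H'}=\bar\rmP\cap H'$, which uses precisely that $\pr^H_{S_1}(\rmP)$ is discrete in $S_1$ (a lattice in the semisimple group $S_1$ is discrete), so that no limit point of $\rmP$ can acquire a nontrivial $S_1$-component without already being in $\rmP\cap$(that fiber); together with the identity-component statement of Proposition \ref{4.14}, this pins down $\bar\rmP_0$ exactly.
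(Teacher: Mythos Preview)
Your approach is essentially the same as the paper's: use Raghunathan together with Lemma~\ref{lm:gam1lat} for the lattice part, restrict the LCP structure for the second part, and use Proposition~\ref{4.14} (discreteness of $\pr^H_{S_1}(\rmP)$) for the identification of reduced characteristic groups.

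Two remarks on execution. First, in the lattice step you cannot apply \cite[Corollary 8.28]{RA} directly to $G=(R\times K)\rtimes S_1$ to conclude that $\Gamma\cap(R\times K)$ is a lattice, because $R\times K$ is not the radical of $G$ ($K$ is semisimple). The paper instead applies Raghunathan inside $L=R\rtimes S_1$ to obtain that $\Gamma^L\cap R$ is a lattice in $R$, and then uses compactness of $K$ explicitly to push this up to co-compactness of $\Sigma$ in $R\times K$; your sketch gestures at this (``since $K$ is compact the relevant statement\ldots still applies'') but the step needs to be written out. Second, your Part~3 argument via the identity $\overline{\rmP\cap H'}=\bar\rmP\cap H'$ (valid precisely because $\pr^H_{S_1}(\rmP)$ is discrete) is correct and in fact a bit cleaner than the paper's explicit two-way sequence chase; both rest on the same key input, Proposition~\ref{4.14}.
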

\begin{proof}
The subgroup $\Sigma$ is clearly discrete. To prove co-compactness, we use the fact that $\Gamma^L=\pr^G_L(\Gamma)$ is a lattice in $L=R\times S_1$ by Lemma \ref{lm:gam1lat}. Since $\tau_{|_{S_1}}:S_1\to \Aut(R)$ has no compact factors in the kernel, $\Gamma^L\cap R$ is a lattice in $R$ by \cite[Corollary 8.28]{RA}. Thus there exists a compact set $C$ in $R$ such that for every $r\in R$ there exists an element $r'\in \Gamma^L\cap R$ with $r'r\in C$. Since $r'\in \Gamma^L$, there exists $s'\in K$ such that $(r',s')\in \Gamma$. Thus, for every $(r,s)\in R\times K$, there exists an element $(r',s')\in \Gamma\cap (R\times K)=\Sigma$ such that $(r',s')\cdot (r,s)$ is in the compact set $C\times K$. Therefore, $\Sigma\bs  (R\times K)$ is the image of $C\times K\subset R\times K$ by the quotient map, hence it is compact. Thus $\Sigma$ is a lattice of $R\times K$. 

To show that $\Sigma\bs  (R\times K)$ has a Lie LCP structure, it is enough, by Proposition \ref{pro:11}, to check that the Lie algebra of $R\times K$ has an LCP structure. This follows directly from \cite[Corollary 4.8]{dBM24}, provided that the restriction to the Lie algebra $\mr\oplus\mk$ of $R\times K$ of the Lee form $\theta$ of the LCP structure of $\mg$ is non-zero. However, the restriction of $\theta$ to the Lie algebra $\ms_1$ of $S_1$ vanishes (since $S_1$ is semisimple and $\theta$ is closed), and $\mg=\mr\oplus\mk\oplus\ms_1$ as vector spaces, showing that $\theta|_{\mr\oplus\mk}\ne 0$.

We denote by ${\rm Q}:=\pr^{\,R\times K}_{R_0\times K}(\Sigma)$. Then $\bar{\rm Q}_0$, the connected component of the identity of its closure $\bar{\rm Q}$ in $R_0\times K$, is by definition the reduced characteristic group of the Lie LCP manifold $\Sigma\bs  (R\times K)$. We will show that, under the identification of $R_0\times K$ with $R_0\times \{1\}\times K$, one has $\bar\rmP_0=\bar{\rm Q}_0$.

 Proposition \ref{4.14} implies $\bar\rmP_0\subset R_0\times K$ and thus
any element in $\bar\rmP_0$ has the form 
$(r,1,s)$ for some $r\in R_0$, $s\in K$, where $1$ is the identity in $S_1$. By Lemma \ref{5.2}, there exist sequences $(r_n)\subset R_0$, $(s_n)\subset K$ such that $(r_n,1,s_n)\in \rmP\cap \bar\rmP_0$ 
and $(r_n,1,s_n)\to (r,1,s)$ in $R_0\rtimes (S_1\times K)$. Since $\rmP=\pr^G_H(\Gamma)$, there exists a sequence $(u_n)\subset U$ such that $(u_n,r_n,1,s_n)\in \Gamma$ for all $n$. In particular, $(u_n,r_n,1,s_n)\in \Gamma\cap (R\times K)=\Sigma$ and $(r_n,1,s_n)\in {\rm Q}$. Therefore, $(r,1,s)\in \bar {\rm Q}$, because it is the limit of $(r_n,1,s_n)$. Hence $\bar\rmP_0\subset \bar {\rm Q}_0$.

Conversely, if  $(r,1,s)\in \bar {\rm Q}_0$, then $(r,1,s)=\lim_{n\to \infty}(r_n,1,s_n)$, with $(r_n,1,s_n)\in {\rm Q}=\pr^{\,R\times K}_{R_0\times K}(\Sigma)$. Thus there exists a sequence $(u_n)\subset U$ such that $(u_n,r_n,1,s_n)\in \Sigma=\Gamma\cap (R\times K)$, and consequently $(u_n,r_n,1,s_n)\in \Gamma$. Projecting to $H=R_0\rtimes_\tau (S_1\times K)$ we obtain $(r_n,1,s_n)\in \rmP$ and therefore $(r,1,s)\in \bar\rmP$. It follows that $\bar\rmP_0=\bar {\rm Q}_0$.
\end{proof}

Due to Proposition \ref{p=q}, in order to prove Theorem \ref{main} we can assume from now on that $S_1$ is trivial, i.e. $M$ is a Lie LCP manifold of the form $M=\Gamma\bs G$, where $G=R\times K$ with $R$ solvable, $K$ compact semisimple and $\Gamma$ a lattice in $G$. Recall that $R=U\rtimes_\rho R_0$ and in this case $H=R_0\times K$.

Consider the projection morphisms $\pr^G_{R}:G\to R$ and $\pr^G_{K}:G\to K$ and denote $\Gamma^R:=\pr^G_{R}(\Gamma)$, $\rmP^K:=\pr^G_{K}(\Gamma)$, which are subgroups of $R$ and $K$, respectively. The former is a lattice in $R$ due to Lemma \ref{lm:gam1lat}. Notice that $\Gamma\cap K$ is a normal subgroup of $\rmP^K$ because the product $R\times K$ is direct. In addition, $\Gamma\cap K$ is finite since it is discrete and contained in the compact group $K$.

\begin{lm}\label{lm:g2va} The group $\rmP^K$ is virtually abelian.
\end{lm}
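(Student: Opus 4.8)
The plan is to invoke the Tits alternative together with a compactness argument. Recall that $\rmP^K = \pr^G_K(\Gamma)$ is a finitely generated group, since $\Gamma$ is finitely generated (being a lattice in a connected Lie group) and $\pr^G_K$ is surjective onto $\rmP^K$. By the Tits alternative, a finitely generated linear group over a field of characteristic zero either contains a non-abelian free subgroup or is virtually solvable. Since $\rmP^K \subset K$ with $K$ compact, and $K$ is a compact Lie group hence linear, $\rmP^K$ is a finitely generated linear group. The key observation is that $\rmP^K$ cannot contain a non-abelian free subgroup, because a free group on two generators is not amenable, while every subgroup of a compact group is amenable (compact groups are amenable, and amenability passes to subgroups). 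Hence $\rmP^K$ is virtually solvable.

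Having reduced to $\rmP^K$ virtually solvable, I would then upgrade ``virtually solvable'' to ``virtually abelian.'' Here I would use the structure of the ambient compact group: the closure $\overline{\rmP^K}$ is a compact Lie group, and a virtually solvable subgroup of a compact Lie group has virtually abelian closure. Indeed, by a theorem of Jordan (in the Lie-group formulation), a compact Lie group admits a neighborhood of the identity containing no nontrivial subgroup, so $\overline{\rmP^K}$ has finitely many components; its identity component $(\overline{\rmP^K})_0$ is a connected compact Lie group which is a quotient of a solvable group (after passing to the finite-index solvable subgroup of $\rmP^K$ and taking closures), hence is a torus, therefore abelian. Pulling back the finite-index solvable subgroup and intersecting with $(\overline{\rmP^K})_0$ yields a finite-index subgroup of $\rmP^K$ which is abelian; this is exactly what is meant by $\rmP^K$ being virtually abelian.

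The main obstacle I anticipate is the passage from ``virtually solvable'' to ``virtually abelian'': the naive statement that a solvable subgroup of a compact group is abelian is false (e.g.\ finite nonabelian solvable subgroups of $\SO(3)$), so one genuinely needs to work with the identity component of the closure and the fact that connected compact solvable Lie groups are tori. Care is also needed because ``virtually solvable'' only gives a \emph{finite-index} solvable subgroup, so after taking closures and identity components one must track the finite index carefully to conclude the original group is virtually abelian. An alternative, perhaps cleaner, route is: $\overline{\rmP^K}$ is a compact Lie group, so $\rmP^K$ is virtually solvable by the amenability argument above, hence its closure $\overline{\rmP^K}$ is virtually solvable as well (closure of a virtually solvable group in a Lie group is virtually solvable); then a virtually solvable compact Lie group has abelian identity component (it is a compact connected solvable Lie group, hence a torus), so $\overline{\rmP^K}$ is virtually abelian, and finally $\rmP^K \subset \overline{\rmP^K}$ inherits virtual abelianness.
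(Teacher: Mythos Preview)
Your amenability argument is incorrect, and this is a genuine gap. The claim ``every subgroup of a compact group is amenable'' is false in the sense you need: compact groups are amenable \emph{as topological groups}, but amenability does not pass to abstract subgroups equipped with the discrete topology. The standard counterexample is precisely the one you are trying to exclude: $\SO(3)$ is compact, yet it contains a free subgroup on two generators (this is the group-theoretic content of the Banach--Tarski paradox), and that free subgroup is non-amenable as a discrete group. So compactness of $K$ alone does not prevent $\rmP^K$ from containing a non-abelian free subgroup; some additional input specific to the situation is required.

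The paper supplies that input as follows. One observes that the projection $\pr^G_R$ has kernel $\Gamma\cap K$ on $\Gamma$, which is finite, and image $\Gamma^R$, which is solvable (being a subgroup of the solvable group $R$). This yields a surjective homomorphism $\Gamma^R\to \rmP^K/(\Gamma\cap K)$, so $\rmP^K/(\Gamma\cap K)$ is solvable. Now if $\rmP^K$ contained a free group $F$ on two generators, the map $F\to \rmP^K/(\Gamma\cap K)$ would have finite kernel; but free groups have no nontrivial finite subgroups, so $F$ would embed into a solvable group, a contradiction. This is where the specific structure $G=R\times K$ with $R$ solvable is actually used. Your second step, upgrading ``virtually solvable'' to ``virtually abelian'' via the identity component of the closure being a torus, is correct and is essentially what the paper does.
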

\begin{proof}We need to show that $\rmP^K$ contains an abelian subgroup of finite index.

For any $\gamma_1\in \Gamma^R$, there exists $\gamma_2\in \rmP^K$  such that $(\gamma_1,\gamma_2)\in\Gamma$. If $\gamma_2'\in\rmP^K$ also verifies $(\gamma_1, \gamma_2')\in \Gamma$, then $(1,\gamma_2^{-1}\gamma_2')\in \Gamma\cap K$ since the product $R\times K$ is direct and $\Gamma$ is a subgroup. This implies that  the following map 
\begin{equation}
\psi:\Gamma^R\to \rmP^K/(\Gamma\cap K),\qquad
\gamma_1\mapsto \psi(\gamma_1):=\gamma_2(\Gamma\cap K)
\end{equation}
is well defined and, one can easily check, that it is a surjective group homomorphism.

Being a subgroup of a solvable group, $\Gamma^R$ is itself solvable and so is its homomorphic image by $\psi$, $\rmP^K/(\Gamma\cap K)$.

As a consequence of Peter-Weyl's Theorem, we know that $K$ embeds into a linear group \cite[Corollary IV.4.22]{KN}. 
In addition, $\rmP^K$ is finitely generated since $\Gamma$ is so, being the fundamental group of a compact manifold. Therefore, the image of $\rmP^K$ under this embedding (which we also denote by $\rmP^K$) is finitely generated. By the Tits alternative \cite{T72}, $\rmP^K$ is either virtually solvable  or contains a free group $F$ on two generators.

Suppose for a contradiction that the latter statement holds. Then the composition $F\hookrightarrow\rmP^K\to\rmP^K/(\Gamma\cap K)$ has kernel $F\cap \Gamma\cap K$, which is finite (being a discrete subgroup of $K$). Since every finite subgroup of a free group is trivial, the projection is injective. Hence $\rmP^K/(\Gamma\cap K)$ contains a free group on two generators, contradicting the fact that it is solvable. 

Consequently, $\rmP^K$ is virtually solvable, that is, it contains a solvable subgroup $D$ of finite index. One has that $\bar D\subset K$ is compact and solvable, and thus its connected component $\bar D_0$ is abelian and of finite index in $\bar D$.

The kernel of the natural map $D\to \bar D/\bar D_0 $ is $D\cap \bar D_0$ which is abelian, and its image is finite. Therefore the abelian group $D\cap \bar D_0$ has finite index in $D$, which has itself finite index in $\rmP^K$.
\end{proof}

\begin{pro}\label{pro:gamexists}
There exists a finite index subgroup $\Lambda\subset \Gamma$ such that $\pr^G_{K}(\Lambda)$ is abelian and $\Lambda\cap K$ is trivial.
\end{pro}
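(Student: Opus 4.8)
The plan is to combine the two preceding structural results—Lemma \ref{lm:g2va}, which says $\rmP^K$ is virtually abelian, and the finiteness of $\Gamma\cap K$—to carve out the desired subgroup in two stages: first kill the non-abelianness of the $K$-projection, then kill the intersection with $K$.

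First I would use Lemma \ref{lm:g2va} to produce a finite index subgroup $A\subset \rmP^K$ which is abelian. Since $\pr^G_K:\Gamma\to\rmP^K$ is a surjective group homomorphism (the product $R\times K$ being direct), the preimage $\Lambda_1:=(\pr^G_K)^{-1}(A)\subset\Gamma$ has finite index in $\Gamma$, equal to the index of $A$ in $\rmP^K$, and $\pr^G_K(\Lambda_1)=A$ is abelian. This disposes of the first requirement. Note this step does not need $\Gamma$ to act on $\Lambda_1$ in any particular way; it is just the correspondence between subgroups of finite index under a surjection.

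Second, I must arrange $\Lambda\cap K$ trivial while keeping finite index and the abelian $K$-projection. Here I use that $\Lambda_1\cap K$ is finite: it is a discrete subgroup of the compact group $K$, being contained in the discrete group $\Gamma$. A clean way to finish is to note that $N:=\Lambda_1\cap K$ is a finite \emph{normal} subgroup of $\Lambda_1$ (it is normal in $\Gamma$ since $R\times K$ is a direct product, so $K$—hence $\Lambda_1\cap K$ together with its normalizer conditions—behaves well under conjugation; more precisely conjugation by any element of $G$ preserves $K$ and preserves $\Lambda_1$, hence preserves $N$). Then I would pass to $\Lambda:=\Lambda_1\cap\ker(\pr^G_K)\cdot(\text{something})$—but more directly: since $\pr^G_K(\Lambda_1)=A$ is abelian and $N=\ker(\pr^G_K|_{\Lambda_1})=\Lambda_1\cap K$ is finite, $\Lambda_1$ is a finite extension of $A$. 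The cleanest route is to take $\Lambda$ to be a torsion-free finite index subgroup of $\Lambda_1$: such a subgroup exists because $\Lambda_1$, being the fundamental group of a compact aspherical manifold (a quotient of a solvable-by-compact simply connected Lie group; or at any rate a lattice in $G=R\times K$), is residually finite, and one can find a finite index subgroup avoiding the finitely many nontrivial elements of $N$; alternatively, apply Selberg's lemma since $\Lambda_1$ is finitely generated and linear (it embeds in $R\times K\hookrightarrow$ a linear group, using Peter--Weyl for $K$ as in Lemma \ref{lm:g2va} and that $R$ is a simply connected solvable, hence linear, Lie group). Any such $\Lambda$ has $\Lambda\cap K=\Lambda\cap N$ torsion-free and finite, hence trivial, and $\pr^G_K(\Lambda)\subset A$ is still abelian.

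The main obstacle is the second stage: making the $K$-component torsion-free without destroying finite index. The subtlety is that one wants a single finite index subgroup achieving \emph{both} conditions simultaneously, so one cannot simply intersect $\ker\pr^G_K$ with $\Lambda_1$ (that would have infinite index, being a lattice only in $R$, not in $G$). The resolution is that $\Lambda_1\cap K$ is finite, so "removing torsion in the $K$-direction" is a finite condition, solvable by residual finiteness / Selberg's lemma applied to the finitely generated linear group $\Lambda_1$; one must just be slightly careful to invoke these in a form that applies (finitely generated, and linear via Peter--Weyl for the compact factor and linearity of simply connected solvable Lie groups for the solvable factor). Everything else is routine bookkeeping with the direct product structure $G=R\times K$.
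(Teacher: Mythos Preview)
Your approach via Selberg's lemma / residual finiteness is correct, but the paper takes a more elementary route that avoids these tools entirely. After obtaining the finite-index abelian subgroup $A\subset\rmP^K$, the paper observes that $A$ is finitely generated (being of finite index in the finitely generated group $\rmP^K$), hence $A\simeq\Z^k\oplus T$ with $T$ finite by the structure theorem for finitely generated abelian groups. One then simply sets $\Lambda:=(\pr^G_K)^{-1}(\Z^k)$. The projection $\pr^G_K(\Lambda)\subset\Z^k$ is abelian, and $\Lambda\cap K$ is finite (discrete in the compact $K$) yet, since $\pr^G_K$ restricts to the identity on $K$, it sits inside the torsion-free group $\Z^k$; hence $\Lambda\cap K$ is trivial. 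Both conditions thus come for free from stripping the torsion on the abelian \emph{image} side, with no need to invoke linearity or residual finiteness of $\Gamma$ itself.

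Your route works but imports heavier machinery; the paper's trick is shorter and self-contained. Two small slips in your write-up, neither load-bearing: the parenthetical about $\Lambda_1\backslash G$ being aspherical is unjustified (the factor $K$, e.g.\ $\SU(2)$, need not be contractible), and the claimed equality $\ker(\pr^G_K|_{\Lambda_1})=\Lambda_1\cap K$ is wrong---that kernel is $\Lambda_1\cap R$, typically infinite---so $\Lambda_1$ is not a finite extension of $A$. Your actual argument only uses that $\Lambda_1\cap K$ is finite and that $\Lambda_1$ is finitely generated and linear, all of which hold.
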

\begin{proof} By Lemma \ref{lm:g2va}, $\rmP^K$ is virtually abelian, i.e. it contains an abelian subgroup $A$ of finite index. Moreover, $\rmP^K$ is finitely generated since $\rmP^K=\pr^G_{K}(\Gamma)$ and $\Gamma$ is the fundamental group of a compact manifold. 

As $A$ is a finite index subgroup of a finitely generated group, it is also finitely generated. Therefore, $A=\Z^k\oplus T$ for some $k\in \N$, where $T$ is a finite group. Notice that $\Z^k$ has finite index in $\rmP^K$.
Set $\Lambda:=(\pr^G_{K})^{-1}(\Z^k)$; clearly, $\Lambda\subset \Gamma$ and  $\pr^G_{K}(\Lambda)$ is abelian. In addition, $\Lambda\cap K\subset \Gamma\cap K$ is discrete and contained in $K$, which is compact, so it is finite. On the other hand, by the definition of $\Lambda$, the intersection $\Lambda\cap K$ is a subgroup of $\Z^k$. Therefore $\Lambda\cap K$  is trivial. 
\end{proof}

As mentioned before, it is enough to prove Theorem \ref{main} for the case where the semisimple part of $G$ is compact and acts trivially on the radical of $G$. Therefore, Theorem \ref{main} is a consequence of the following:

\begin{pro} The characteristic group of a Lie LCP manifold of the form $M=\Gamma\bs (R\times K)$ with $R$ solvable, $K$ compact, and both simply connected, is contained in the radical $R$ of $G$.
\end{pro}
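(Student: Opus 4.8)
The plan is to prove that the compact connected subgroup $T:=\pr^H_K(\bar\rmP_0)\subset K$ is trivial, where $H=R_0\times K$. Since $S_1$ is trivial we already know $\bar\rmP_0\subset H=R_0\times K$, so $T=\{1\}$ gives $\bar\rmP_0\subset R_0$, and then the abelian characteristic group $U\times\bar\rmP_0$ is contained in $U\times R_0=R$, the radical of $G$. The first step is to replace $\Gamma$ by the finite index subgroup $\Lambda$ provided by Proposition \ref{pro:gamexists}; by Proposition \ref{pro:finsg} this does not change $\bar\rmP_0$, so we may assume $\Gamma\cap K=\{1\}$ and $\rmP^K:=\pr^G_K(\Gamma)$ abelian. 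Then $\ker(\pr^G_R|_\Gamma)=\Gamma\cap K=\{1\}$, so $\pr^G_R$ identifies $\Gamma$ with the lattice $\Gamma^R:=\pr^G_R(\Gamma)$ of $R$ (Lemma \ref{lm:gam1lat}), and $\Gamma$ is the graph of a homomorphism $\mu\colon\Gamma^R\to K$ with $\mu(\Gamma^R)=\rmP^K$; abelianity of $\rmP^K$ forces $[\Gamma^R,\Gamma^R]\subset\ker\mu$.

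The next step is to bring in the auxiliary solvmanifold $\Gamma^R\bs R$. Since $\mk$ is semisimple and $\theta$ is closed we have $\theta|_{\mk}=0$, and as $\mg=\mr\oplus\mk$ as vector spaces this gives $\theta|_{\mr}\neq 0$; by \cite[Corollary 4.8]{dBM24} and Proposition \ref{pro:11}, $\Gamma^R\bs R$ carries a Lie LCP solvmanifold structure with flat factor $\mmu$. Let $\bar\rmP^R_0\subset R_0$ be its reduced characteristic group (simply connected by Proposition \ref{pro:charsol}) and $\Gamma^R_0:=\Gamma^R\cap(U\times\bar\rmP^R_0)$. By Proposition \ref{pro:findsol}, $[\Gamma^R,\Gamma^R_0]$ has finite index in $\Gamma^R_0$; since $[\Gamma^R,\Gamma^R_0]\subset[\Gamma^R,\Gamma^R]\subset\ker\mu$ and $\mu$ is a homomorphism, $\mu$ is constant on each coset of $[\Gamma^R,\Gamma^R_0]$, hence $\mu(\Gamma^R_0)$ is finite.

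It remains to connect $\bar\rmP_0$ with $\bar\rmP^R_0$. Writing $T':=\pr^H_{R_0}(\bar\rmP_0)$, the continuity of $\pr^H_{R_0}$ together with $\bar\rmP_0=\overline{\rmP\cap\bar\rmP_0}$ (Lemma \ref{5.2}) and the compatibility of iterated projections (cf.~\eqref{abc}), which yields $\pr^H_{R_0}(\rmP)=\pr^R_{R_0}(\Gamma^R)$, show that the connected group $T'$ is contained in $\bar\rmP^R_0$. Moreover Proposition \ref{pro:xuux} gives $\rho|_{\bar\rmP_0}=\Id$, so $U$ and $T'$ commute in $R=U\rtimes_\rho R_0$ and $\pr^G_R(U\times\bar\rmP_0)=U\times T'$; hence, setting $\Gamma_0:=\Gamma\cap(U\times\bar\rmP_0)$, we obtain $\pr^G_R(\Gamma_0)\subset\Gamma^R\cap(U\times T')\subset\Gamma^R_0$, and therefore $\pr^G_K(\Gamma_0)=\mu(\pr^G_R(\Gamma_0))\subset\mu(\Gamma^R_0)$ is finite. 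On the other hand, by Lemma \ref{lm:pip} the second-factor projection of $\Gamma_0$ onto $\bar\rmP_0$ equals $\rmP\cap\bar\rmP_0$, which is dense in $\bar\rmP_0$ by Lemma \ref{5.2}; composing with the continuous surjection $\bar\rmP_0\to T$ shows $\pr^G_K(\Gamma_0)$ is dense in $T$. A subset of the connected group $T$ which is both finite and dense forces $T=\{1\}$, hence $\bar\rmP_0\subset R_0$ and the characteristic group $U\times\bar\rmP_0$ is contained in $R$, as claimed.

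I expect the main obstacle to be the comparison step: one has to juggle the two product decompositions $G=R\times K$ and $G=U\rtimes_\rho(R_0\times K)$, check that the various projection maps are mutually compatible, and in particular verify that the reduced characteristic group $\bar\rmP^R_0$ of the auxiliary solvmanifold genuinely contains the $R_0$-shadow $T'$ of $\bar\rmP_0$, and that $\pr^G_R(\Gamma_0)$ lands inside $\Gamma^R\cap(U\times\bar\rmP^R_0)$ so that Proposition \ref{pro:findsol} can be fed the right subgroup. A related subtlety is making sure that the LCP structure induced on $\mr$ has \emph{maximal} flat factor, so that $\Gamma^R\bs R$ is a bona fide Lie LCP solvmanifold and Propositions \ref{pro:charsol} and \ref{pro:findsol} apply verbatim.
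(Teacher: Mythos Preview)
Your argument is correct and follows essentially the same route as the paper's: pass to a finite-index subgroup via Propositions \ref{pro:finsg} and \ref{pro:gamexists}, realise $\Gamma$ (equivalently $\rmP$) as the graph of a homomorphism into $K$ with abelian image, apply Proposition \ref{pro:findsol} to the auxiliary solvmanifold $\Gamma^R\bs R$ so that the $K$-shadow of $\Gamma_0$ is finite, and conclude by density and connectedness. The only differences are cosmetic---you work with $\mu\colon\Gamma^R\to K$ at the lattice level while the paper uses $f\colon\rmP^R\to\rmP^K$, the two being identified via the isomorphism $\pr^R_{R_0}\colon\Gamma^R\to\rmP^R$ of Lemma \ref{lm:2.5}---and the maximality subtlety you flag for the flat factor of $(\mr,g|_\mr,\theta|_\mr)$ is equally implicit in the paper's own proof.
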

\begin{proof}
Let $(g,\theta)$ be a Lie LCP structure  on $M=\Gamma\bs G$ with $G=R\times K$.
By Proposition \ref{pro:finsg}, the characteristic group of $M$ coincides with that of the quotient of $R\times K$ by a lattice of finite index inside $\Gamma$. Therefore, up to passing to a finite index subgroup and using Proposition \ref{pro:gamexists}, we can assume that 
\begin{equation}\label{triv-ab}
    \Gamma\cap K\mbox{ is trivial and } \pr_K^G(\Gamma) \mbox{ is abelian}. 
\end{equation}

The LCP structure on $M$ induces an LCP structure $(g,\theta,\mmu)$ on the Lie algebra $\mg=\mr\oplus\mk$ of $G=R\times K$. In addition, $\mg$ is unimodular, since $G$ admits a lattice \cite{Mil76}. Consequently, \cite[Theorem 4.2]{dBM24} implies that $(\mg,g,\theta)$ is adapted, that is, $\theta|_\mmu=0$. Moreover, since $\mr$ is the radical of $\mg$, $(\mr, g,\theta)$ is an LCP Lie algebra by \cite[Corollary 4.8]{dBM24}.

We write $R=U\rtimes_\rho R_0$ as in  \eqref{eq:LeviG} and $H=R_0\times K$.  Then, $\rmP=\pr^G_H(\Gamma)$ verifies
\begin{equation}\label{eq:prk}
\rmP\subset \rmP^R\times \rmP^K,
\end{equation}
where  $\rmP^R:=\pr^H_{R_0}(\rmP)$, $\rmP^K:=\pr^H_K(\rmP)$ are the projections on each factor of $R_0\times K$. By \eqref{abc}, $\rmP ^K$ is equal to $\pr^G_K(\Gamma)$, thus it is abelian due to \eqref{triv-ab}.

Let us denote by $\Gamma^R:=\pr^G_R(\Gamma)$, which is a lattice in $R$ due to Lemma \ref{lm:gam1lat}. Then $(g,\theta)$ is a Lie LCP structure on $\Gamma^R\bs R$ with reduced characteristic group $\overline{(\rmP^R)}_0$.

We claim that $\rmP\subset \rmP^R\times \rmP^K$ can be written as the graph of a group homomorphism $f:\rmP^R \to \rmP^K$. Indeed, if $(r,s), (r,s')\in \rmP$, then there exist $u,u'\in U$ such that $(u,r,s),(u',r,s')\in \Gamma$ and thus 
$(u^{-1}u',1,s^{-1}s')\in \Gamma$, which implies that
$(u^{-1}u',1)\in \Gamma^R$ and therefore $(u^{-1}u',1)\in \ker (\pr^R_{R_0})\cap{\Gamma^R}$. However, $\Gamma^R\bs R$ is a Lie LCP manifold, thus by Lemma \ref{lm:2.5}, we get $u=u'$ and therefore $(1,1,s^{-1}s)\in \Gamma\cap K$; the latter intersection is trivial by the choice of $\Gamma$. Hence $s=s'$ and there is a well defined map $f:\rmP^R\to \rmP^K$ such that for every $r\in \rmP^R$, $f(r)$ is the unique $s\in \rmP^K$ verifying $(r,s)\in \rmP$. This is clearly a group morphism and 
\begin{equation}
\label{eq:Pf}
\rmP=\{(r,f(r)):r\in \rmP^R\} \subset \rmP^R\times \Im f.
\end{equation}
as claimed. Moreover, $\pr^{H}_{R_0}(\rmP)=\rmP^R$ and thus $\pr^{H}_{R_0}(\bar\rmP_0)\subset(\overline{\rmP^R})_0$ implying that:
\begin{equation}\label{eq:ppf}
    \rmP\cap \bar\rmP_0 \subset (\rmP^R\cap \overline{(\rmP^R)}_0)\times f(\rmP^R\cap \overline{(\rmP^R)}_0).
\end{equation} 

We consider the lattice $\Gamma^R_0$ of the characteristic group of the Lie LCP manifold $\Gamma^R\bs R$ defined as before by $\Gamma^R_0=\Gamma^R\cap(U\times \overline{(\rmP^R)}_0)$. 
Lemma \ref{lm:pip} applied to $\Gamma^R\bs R$ gives
$\rmP^R\cap \overline{(\rmP^R)}_0=\pr^R_{R_0}(\Gamma_0^R)$. The key point here is that $[\Gamma^R,\Gamma_0^R]$ has finite index in $\Gamma_0^R$ by Proposition \ref{pro:findsol}, so $\pr^R_{R_0}([\Gamma^R,\Gamma_0^R])$ has finite index in $\pr^R_{R_0}(\Gamma_0^R)=\rmP^R\cap \overline{(\rmP^R)}_0$ and thus
\begin{equation}
\label{eq:union}
\rmP^R\cap \overline{(\rmP^R)}_0=\bigcup_{i=1}^m \pr_{R_0}([\Gamma^R,\Gamma_0^R])\cdot p_i
\end{equation}
for some $p_1,\ldots,p_m\in \rmP^R$. In addition, 
\begin{equation}\label{eq:incl}
\pr^R_{R_0}([\Gamma^R,\Gamma_0^R])=[\rmP^R,\rmP_0^R]\subset [\rmP^R,\rmP^R]\subset \rmP^R.
\end{equation}
Moreover, since $\rmP^K$ is abelian, $f$ vanishes on the commutator subgroup of $\rmP^R$. This, together with \eqref{eq:union} and \eqref{eq:incl}, implies $f(\rmP^R\cap \overline{(\rmP^R)}_0)\subset \{f(p_1), \ldots, f(p_m)\}$.

Finally, using \eqref{eq:ppf} we get
\[ \rmP\cap \bar\rmP_0 \subset (\rmP^R\cap \overline{(\rmP^R)}_0)\times \{f(p_1), \ldots, f(p_m)\}.
\]Since $\bar\rmP_0=\overline{\rmP\cap \bar\rmP_0}$ (see Lemma \ref{5.2}) is connected, we get $\bar\rmP_0\subset \overline{\rmP^R\cap \overline{(\rmP^R)}_0}\times \{1\}$. By Lemma \ref{5.2} applied to $\rmP^R$ we thus get 
\begin{equation}\label{pr0}
    \bar\rmP_0\subset \overline{(\rmP^R)}_0\subset R_0.
\end{equation}
\end{proof}

As an immediate consequence of Theorem \ref{main}, we obtain a positive answer to Question \ref{quiz} in the context of Lie LCP manifolds. 
\begin{cor}\label{cor:sc}
Every Lie LCP manifold has simply connected characteristic group. 
\end{cor}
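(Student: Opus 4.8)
The plan is to deduce Corollary \ref{cor:sc} directly from Theorem \ref{main} together with the structural facts already assembled in the excerpt. Recall that the characteristic group of the Lie LCP manifold $M=\Gamma\bs G$ is the abelian Lie group $U\times\bar\rmP_0\subset G$ (Corollary \ref{cor4.3}), and by Theorem \ref{main} it is contained in the radical $R$ of $G$. Now $R$ is solvable, and it is simply connected because $G$ is simply connected (the radical of a simply connected Lie group is closed, connected, and simply connected). So the first step is simply to record that $U\times\bar\rmP_0$ is a connected Lie subgroup of the simply connected solvable Lie group $R$.

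The second and final step is to invoke the standard fact that any connected Lie subgroup of a simply connected solvable Lie group is itself closed and simply connected --- this is exactly \cite[Theorem 3.18.12]{VAR}, which is already cited in the proof of Proposition \ref{pro:charsol}. Applying it to the connected subgroup $U\times\bar\rmP_0\subset R$ gives that $U\times\bar\rmP_0$ is simply connected, which is the assertion of the corollary. (Equivalently, one could note that $\bar\rmP_0$ is a connected abelian subgroup of the simply connected solvable group $R$, hence isomorphic to some $\R^m$, and $U\simeq\R^q$, so the product is $\R^{q+m}$.)

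The proof is therefore essentially a one-line consequence of the main theorem; there is no real obstacle, since all the work --- in particular the containment $\bar\rmP_0\subset R_0\subset R$ --- has already been done in Theorem \ref{main}, and the passage from ``subgroup of a simply connected solvable group'' to ``simply connected'' is a classical result. The only thing to be careful about is to make sure $R$ itself is simply connected; this follows because in the Levi decomposition $G=R\rtimes S$ with $G$ simply connected, both factors are simply connected (as already used for $S$ via \cite[Theorem 3.18.13]{VAR}, and analogously for $R$).

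\begin{proof}
By Corollary \ref{cor4.3}, the characteristic group $U\times\bar\rmP_0$ is a connected abelian subgroup of $G=U\rtimes_\rho H$, and by Theorem \ref{main} it is contained in the radical $R$ of $G$. Since $G$ is simply connected, so is its radical $R$ \cite[Theorem 3.18.13]{VAR}, and $R$ is solvable. Hence $U\times\bar\rmP_0$ is a connected Lie subgroup of the simply connected solvable Lie group $R$, and therefore it is itself simply connected by \cite[Theorem 3.18.12]{VAR}.
\end{proof}
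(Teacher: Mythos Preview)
Your proof is correct and follows essentially the same approach as the paper's: invoke Theorem \ref{main} to place the characteristic group inside the radical $R$, note that $R$ is simply connected because $G$ is, and then apply \cite[Theorem 3.18.12]{VAR} to conclude. The only minor difference is that you explicitly cite Corollary \ref{cor4.3} and \cite[Theorem 3.18.13]{VAR} for facts the paper leaves implicit.
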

\begin{proof}
By Theorem \ref{main}, the characteristic group of 
a Lie LCP structure on $M=\Gamma\bs G$ is contained in the radical $R$ of $G$. The group $R$ is simply connected, since $G$ is so. It remains to notice that connected Lie subgroups of simply connected solvable Lie groups are simply connected \cite[Theorem 3.18.12]{VAR}. 
\end{proof}

We end up with an algebraic result on the characteristic group of Lie LCP manifolds. 

\begin{cor}\label{cor:comm}
The characteristic group of a Lie LCP manifold $M=\Gamma\bs G$ is contained in the commutator subgroup $G'$ of $G$.
\end{cor}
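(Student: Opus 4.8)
The plan is to show that the Lie algebra $\mmu\oplus\mmp$ of the characteristic group $E:=U\times\bar\rmP_0$ — which is a connected, abelian, simply connected Lie subgroup of $G$ by Corollaries \ref{cor4.3} and \ref{cor:sc}, where $\mmp$ denotes the Lie algebra of $\bar\rmP_0$ — is contained in the derived subalgebra $\mg'=[\mg,\mg]$. Since $G'$ is the connected Lie subgroup of $G$ with Lie algebra $\mg'$ and $E$ is connected, this gives $E\subset G'$, which is the assertion.

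For the flat factor: as $\mh$ is non-unimodular, there is $x\in\mh$ with $\xi(x)=-\tfrac1q H^\mh(x)\neq0$, so $\alpha(x)=\xi(x)\Id_\mmu+\beta(x)$ with $\beta(x)\in\so(\mmu)$ has every eigenvalue of real part $\xi(x)\neq0$, hence is invertible. Therefore $\mmu=\alpha(x)(\mmu)=[x,\mmu]\subset\mg'$.

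For the non-flat factor I would reuse the mechanism of Proposition \ref{pro:findsol}. Pick $\gamma\in\Gamma$ acting on $G$ as a strict homothety of $e^{2\varphi}g$; such a $\gamma$ exists because the LCP structure is non-exact. Write $\gamma=\gamma_1\gamma_2$ with $\gamma_1=p_U(\gamma)\in U$ and $\gamma_2=\pr^G_H(\gamma)\in\rmP$. Conjugation by $\gamma_2$ preserves $\bar\rmP_0$ (since $\bar\rmP_0\trianglelefteq\bar\rmP$) and conjugation by $\gamma_1$ acts trivially on $\bar\rmP_0$ by Proposition \ref{pro:xuux}; together with the normality of $U$ this shows that $f:=\Ad_\gamma$ preserves $\mmu$ and $\mmp$, hence restricts to a linear automorphism of the vector space $E=\mmu\oplus\mmp$, with $f(\Gamma_0)=\Gamma_0$ by Proposition \ref{4.8}. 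On $\mmu$, $f$ acts as $\rho_0(\gamma_2)$, which is $e^{\varphi(\gamma_2)}$ times an orthogonal map with $\varphi(\gamma_2)=\varphi(\gamma)\neq0$ (recall $\varphi|_U=0$), so $(f-\Id)|_\mmu$ is invertible. Moreover the projection of $\Gamma_0$ onto $\mmp$ parallel to $\mmu$ is $\pr^G_H(\Gamma_0)=\rmP\cap\bar\rmP_0$ (Lemma \ref{lm:pip}), which is dense in $\bar\rmP_0$ by Lemma \ref{5.2}. Lemma \ref{5.1} then yields that $f-\Id$ is invertible on $E$, hence on the $f$-invariant subspace $\mmp$, so $(f-\Id)(\mmp)=\mmp$. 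Finally, $\Ad_\gamma$ induces the identity on the abelian quotient $\mg/\mg'$, so $(\Ad_\gamma-\Id)(\mg)\subset\mg'$, and in particular $\mmp=(f-\Id)(\mmp)\subset\mg'$.

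Putting the two parts together, $\mathrm{Lie}(E)=\mmu\oplus\mmp\subset\mg'$, so $E\subset G'$. The main obstacle is really just to notice that the invertibility argument of Proposition \ref{pro:findsol} does not need solvability: the only place solvability was used there is in guaranteeing that the characteristic group is simply connected, which now holds in general by Corollary \ref{cor:sc}. The remaining points — that $\Ad_\gamma$ acts on $U$ as $\rho_0(\gamma_2)$, that $\varphi(\gamma)=\varphi(\gamma_2)$, and the identification of the relevant projections — are routine verifications using the formulas of Sections 3 and 4.
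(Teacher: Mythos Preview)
Your argument is correct and is in fact more direct than the paper's. The paper first reduces to the solvable case: it invokes Proposition \ref{p=q} to identify $\bar\rmP_0$ with the reduced characteristic group of $\Sigma\bs(R\times K)$, then uses the inclusion \eqref{pr0} to pass to the solvmanifold $\Gamma^R\bs R$, and finally applies Proposition \ref{pro:findsol} there together with a closure/connectedness argument at the group level (writing $\rmP\cap\bar\rmP_0$ as a finite union of cosets of $[\rmP,\rmP\cap\bar\rmP_0]\subset G'$). You instead bypass the reduction entirely: once Corollary \ref{cor:sc} gives simple connectedness of $U\times\bar\rmP_0$ in general, the linear-algebra core of Proposition \ref{pro:findsol} (namely Lemma \ref{5.1}) applies verbatim, and the conclusion $(f-\Id)(\mmp)=\mmp\subset\mg'$ follows from the elementary fact that inner automorphisms act trivially on $\mg/\mg'$. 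This is a cleaner route; the only point where you do a bit more work than the paper is in explicitly justifying that $\Ad_\gamma$ preserves $\bar\rmP_0$ (via normality of $\bar\rmP_0$ in $\bar\rmP$ and Proposition \ref{pro:xuux}), which the paper's proof of Proposition \ref{pro:findsol} states without argument. Your treatment of $\mmu\subset\mg'$ is also self-contained, whereas the paper cites \cite[Proposition 4.7]{dBM24}.
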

\begin{proof}  
Note that $G$ is unimodular, since it admits a lattice $\Gamma$. Hence the associated LCP Lie algebra $(\mg,g,\theta,\mmu)$ is adapted. By \cite[Proposition 4.7]{dBM24}, $\mmu$ is contained in the commutator subalgebra $\mg'$ of $\mg$. Therefore, the corresponding connected Lie subgroups satisfy $U\subset G'$. It remains to show that the reduced characteristic group $\bar\rmP_0$ is also contained in $G'$.

Consider the Levi decomposition of $G$, as in \eqref{eq:LeviG} and \eqref{eq:SS}, that is, 
\[G=(U\rtimes R_0)\rtimes (S_1\times K).\]
By Proposition \ref{p=q}, $\bar\rmP_0$ coincides with the reduced characteristic group of the LCP manifold $\Sigma \bs(R\times K)$. In addition, by \eqref{pr0} we have $\bar\rmP_0\subset \overline{(\rmP^R)}_0$ so it suffices to show that the reduced characteristic group $\overline{(\rmP^R)}_0$ of the Lie LCP solvmanifold $\Gamma^R\bs R$ is contained in $R'\subset G'$. In order to simplify notation, we can thus assume that $G=R$ is solvable.

Recall that $\pr^G_H$ is an isomorphism from $\Gamma$ to $\rmP$, so by Proposition 4.10, $[\rmP,\pr^G_H(\Gamma_0)]$ has finite index in $\pr^G_H(\Gamma_0)$, which according to Lemma 4.6 is equal to $\rmP\cap\bar\rmP_0$. Therefore, 
there exist elements $\gamma_1,\ldots,\gamma_k\in \rmP\cap\bar\rmP_0$ such that $\rmP\cap\bar\rmP_0=\cup_{i=1}^k[\rmP,\rmP\cap\bar\rmP_0]\gamma_i$. By passing to the closure and using Lemma 2.6 we get 
\begin{equation}
    \bar\rmP_0=\overline{\rmP\cap\bar\rmP_0}\subset\cup_{i=1}^k[\bar\rmP,\overline{\rmP\cap\bar\rmP_0}]\gamma_i\subset \cup_{i=1}^k G'\gamma_i
    \label{eq:PPG}.
\end{equation}

Note that $G'$ is closed since $G$ is simply connected, and connected \cite[Theorem 3.18.8]{VAR}, so each $G'\gamma_i$ in \eqref{eq:PPG} is open and closed. Therefore,  $\bar\rmP_0\subset G'$ which is the connected component of the identity in the union on the right hand side.
\end{proof}

\bibliographystyle{plain}
\bibliography{biblio}

\end{document}